\documentclass[10pt]{article}
\usepackage[T1]{fontenc}
\usepackage[utf8]{inputenc}

\usepackage[english]{babel}
\usepackage{geometry}
\usepackage{extsizes}
\usepackage{amsmath}
\usepackage{amssymb}
\usepackage{amsthm}
\usepackage{subcaption}
\usepackage{graphicx}
\usepackage{xcolor}
\usepackage[colorlinks=true, allcolors=blue]{hyperref}
\usepackage[labelfont=bf, labelsep=space]{caption}

\usepackage{subcaption}  
\geometry{
  a4paper,
  left=1in,   
  right=1in,   
  top=1in,   
  bottom=1in 
}
\usepackage{float}

\usepackage[normalem]{ulem}

\newcommand{\ignore}[1]{}

\newtheoremstyle{style}
  {\topsep}   
  {\topsep}   
  {\upshape}  
  {}          
  {\bfseries}
  {.}       
  {.5em}     
  {}    

\theoremstyle{style}

\newtheorem{theorem}{Theorem}[section]
\newtheorem{lemma}[theorem]{Lemma}
\newtheorem{corollary}[theorem]{Corollary}
\newtheorem{proposition}[theorem]{Proposition}

\title{Eigen-componentwise convergence of SGD on quadratic programming}
\author{Lehan Chen and Yuji Nakatsukasa}
\date{December, 2024}
\begin{document}
\maketitle

\begin{abstract}
    \noindent
    Stochastic gradient descent (SGD) is a workhorse algorithm for solving large-scale optimization problems     in data science and machine learning. 
    Understanding the convergence of SGD is hence of fundamental importance. In this work we examine the SGD convergence (with various step sizes) when applied to unconstrained convex
    quadratic programming (essentially least-squares (LS) problems), and in particular analyze the error components respect to the eigenvectors of the Hessian. The main message is that the convergence depends largely on the corresponding eigenvalues (singular values of the coefficient matrix in the LS context), namely the components for the large singular values converge faster in the initial phase. We then show there is a phase transition in the convergence where the convergence speed of the components, especially those corresponding to the larger singular values, will decrease. Finally, we show that the convergence of the overall error (in the solution) tends to decay as more iterations are run, that is, the initial convergence is faster than the asymptote. 
\end{abstract}



\section{Introduction}
SGD is an optimization algorithm originally proposed by Robbins and Monroe~\cite{robbins1951stochastic} and is widely used in current machine learning practices~\cite{amiri2020machine, bottou2010large, shamir2013stochastic}.
In SGD, if we want to minimize the objective function taking the form \(F(x) = \frac{1}{N}\sum_{i=1}^{N} f_i(x)\), each time we choose \(i\) with a certain probability from \(1\ldots N\) and then update \(x_k\) by setting \(x_{k+1} = x_k-\alpha_k\nabla f_i(x_k)\), where \(\alpha_k\) is the step size, or learning rate, we use at the \(k\)-th step. Compared with the classical steepest descent, or gradient descent (GD) algorithm, SGD is more efficient per iteration. However, such advantage comes at a cost~\cite{bottou2018optimization}: when the objective function is strongly convex and smooth, using GD with fixed step size offers a linear convergence rate, while using SGD with fixed step size usually cannot guarantee convergence (in expectation) to the minimum. 
To do so, we need to set our step size to decay to 0, but that results in sub-linear convergence. Still, when the objective function is overly complicated, as happens for example in training neural networks, SGD is still useful because calculating individual \(\nabla f_i(x)\) is significantly easier than \(\nabla F(x)\). SGD has been studied extensively over the last decade~\cite{NEURIPS2018_a07c2f3b, pmlr-v89-li19c, 7875097, sclocchi2024different, smith2020generalization}. In this paper, to understand the behavior of SGD in terms of eigen-componentwise convergence (see below for a precise definition), we focus on a simple but nontrivial class of problems, namely linear least-squares (LS) problems \(\min_{x} \frac{1}{2} \|Ax-b\|^2\) where 
$A\in\mathbb{R}^{M\times N}$ and $M,N$ are large.

One approach for
 solving consistent least-squares problems is the randomized Kaczmarz algorithm~\cite{gower2015randomized, ma2022randomized, schopfer2019linear}, initially proposed by Kaczmarz~\cite{kaczmarz1937} in 1937. The basic idea is that for a least-squares problem \(\min_{x} \frac{1}{2}\|Ax-b\|^2\), we project \(x_k\) to the hyperplane \(\langle a_i, x\rangle = b_i\) each time, where \(a_i\) represents the \(i\)-th row of \(A\) and \(i\) cycles from 1 to \(M\), the number of rows \(A\) has. In subsequent work, Strohmer and Vershynin~\cite{strohmer2009randomized} propose a variant of randomized Kaczmarz algorithm and proved to have a linear convergence rate. Their idea is instead of choosing \(i\) cyclically, we choose row \(i\) with probability \(\|a_i\|^2/\|A\|_F^2\) at each step. Until now, Randomized Kaczmarz algorithm has been thoroughly investigated and improved~\cite{liu2016accelerated, needell2014paved, niu2020greedy}.

The motivation of our paper comes from Steinerberger~\cite{steinerberger2021randomized}, in which he shows (among other results) that 
when randomized Kaczmarz is applied to an LS problem \(\min_{x} \frac{1}{2} \|Ax-b\|^2\), 
the eigencomponentwise error
\(\langle x_k-x_*, v_\ell\rangle\) will converge significantly faster to 0 for smaller \(\ell\), 
where \(v_\ell\) denotes the (right) singular vector of \(A\) with corresponding singular value \(\sigma_\ell\). Equivalently, $v_{\ell}$ is the $\ell$-th eigenvector of the Hessian $A^\top A$.

The goal of this paper is to extend Steinerberger's result to SGD applied to general LS problems. More specifically, our analysis applies to general step sizes (and not speficially the one chosen by Randomized Kaczmarz), and LS problems that are not consistent; thereby getting the setting closer to the practical SGD usage.

 First, we introduce the eigen-componentwise convergence of (deterministic) GD on LS problems by giving an equality for \(\langle x_k-x_*, v_\ell\rangle\). Then we explore the eigen-componentwise convergence of SGD on LS problems with three different step sizes: fixed step size, \(1/k\) decaying step size, and \(1/k^\gamma\) decaying step size. In each of these cases, we first give an upper bound of \(\mathbb{E}[\langle x_k-x_*, v_\ell\rangle]\) to show that the components for large singular values converge faster in the initial phase, and we show that our predictions of \(\langle x_k-x_*, v_\ell\rangle\) is generally accurate in the initial phase. Then we give an upper bound of \(\mathbb{E}[\langle x_k-x_*, v_\ell\rangle^2]\) to illustrate the existence of phase transition after which the difference of the convergence speed of different components is no longer obvious. Finally, we use the two bounds to show that the error in the solution tends to decay as more iterations are run. 

Our results are important in several ways. First, both LS problems and SGD are common in a great variety of contexts and have been heavily explored for a long time~\cite{bjorck2024numerical, bottou1991stochastic, drineas2011faster, hardt2016train, shamir2016convergence}. Traditional randomized Kaczmarz algorithm does a good job at
solving LS problems, but only when the problem is consistent~\cite{bai2021greedy}. In our work, we give a detailed analysis applying SGD on LS problems with decaying step size, which can guarantee convergence to the optimal solution by bounding \(\mathbb{E}[\langle x_k-x_*, v_\ell\rangle]\). Also, by giving a bound of \(\mathbb{E}[\langle x_k-x_*, v_\ell\rangle^2]\), we show the existence of phase transition, which in turn explains the reason why the convergence rate of the overall error in the solution decreases as the number of iterations increases. That actually gives us another favorable property of SGD. In machine learning practices, we are not actually aiming for the minimal solution to the objective function as that could lead to various problems, like overfitting. So a common practice is choosing an optimization algorithm and iterates until the value of the objective function falls into a particular threshold. Our results show that the convergence speed of SGD is usually faster in the initial phase, therefore useful to practical purposes. 

Throughout the paper, we use \(\|\cdot\|\) and \(\|\cdot\|_F\) to denote the 2-norm and the Frobenius norm respectively.

\section{Motivation: Steinerberger's paper}
Steinerberger~\cite{steinerberger2021randomized} mainly includes three main results:
assuming the LS problem is consistent, that is, there exists an $x$ such that $Ax=b$,
\begin{equation} \label{Steinerberger1}
    \mathbb{E}[\langle x_k-x_*, v_\ell\rangle] = \left(1-\frac{\sigma_\ell^2}{\|A\|_F^2} \right)^k\langle x_0-x_*, v_\ell\rangle
\end{equation}
\begin{equation} \label{Steinerberger2}
    \mathbb{E}[\| x_{k+1}-x_*\|^2] \leq \left(1-\frac{1}{\|A\|_F^2}\left\|A\frac{x_k-x_*}{\|x_k-x_*\|}\right\|^2\right)\|x_k-x_*\|^2
\end{equation}
\begin{equation} \label{Steinerberger3}
    \mathbb{E}\left[\left\langle \frac{x_k-x_*}{\|x_k-x_*\|}, \frac{x_{k+1}-x_*}{\|x_{k+1}-x_*\|}\right\rangle^2\right] = 1-\frac{1}{\|A\|_F^2} \left\|A\frac{x_k-x_*}{\|x_k-x_*\|} \right\|^2
\end{equation}

\noindent
Equation~\eqref{Steinerberger1} tells us that \(\|\langle x_k-x_*, v_\ell\rangle\|^2\) converges faster for the largest singular vectors \(\ell\). Inequality~\eqref{Steinerberger2} illustrates that when \(x_k-x_*\) is mainly composed of singular vectors corresponding to small singular values, the convergence of \(\|x_k-x_*\|^2\) will be slower. Equation~\eqref{Steinerberger3} shows that once \(x_k\) is mainly composed of singular vectors corresponding to small singular values, then randomized Kaczmarz is less likely to explore other subspaces. Combining these results, we can give an interpretation of the behavior of \(\|x_k-x_*\|^2\) when using the (traditional) randomized Kaczmarz. Through numerical experiments, we observe that \(\|x_k-x_*\|^2\) will converge at a linear rate, but initially with higher convergence rate, then gradually decreasing.

It is reasonable to assume that initially \(x_0-x_*\) is composed of singular vectors corresponding to different singular values with roughly equal weights. Therefore, \(\|A(x_k-x_*)\|/\|x_k-x_*\|^2\) will be not too small in the first few iterations, that is, $O(\|A\|)$. But as the number of iterations progresses, since \(\langle x_k-x_*, v_\ell\rangle\) will converge faster to 0 for smaller \(\ell\), \(x_k-x_*\) will start to become mainly composed of singular vectors with small singular values, causing \(\|x_k-x_*\|^2\) to converge at a slower rate than initially. 

Figure~\ref{Figure1} represents the case where \(A \in \mathbb{R}^{300\times 150}, \sigma(A)\in [0.01, 1], l_1=1, l_2=135, l_3=150\).  First we notice that although both \(l_1\) and \(l_2\) have experienced phase transition, the blue curve and the green curve 
are still significantly below the red one. Before phase transition when the number of iteration is still small, we can indeed predict the behavior for \(l_1\) and \(l_2\) using Equation~\eqref{Steinerberger1}. But after that, their convergence rate will decrease significantly as more iterations are run. Similar results hold on the behavior of \(\|x_k-x_*\|^2\), represented by the yellow curve. These three important observations---faster convergence for smaller \(\ell\) in the initial stage, phase transition as iteration progresses, decreasing convergence speed of \(\|x_k-x_*\|^2\)---motivate us to give more rigorous interpretations on their behaviors and generalize them to wider SGD contexts.

\section{Eigencomponent Convergence of GD}
Here we study the eigencomponentwise convergence of the classic, deterministic gradient descent method. While the analysis is trivial, we are unaware of a reference that states the result, which clearly shows that the convergence is far from uniform. 

For \(M, N \in \mathbb{N}\) with \(M \geq N\), let \(A \in \mathbb{R}^{M \times N}\) and \(b \in \mathbb{R}^M\). Define the function \(F: \mathbb{R}^N \to \mathbb{R}\) by
\[F(x) = \frac{1}{2} \|Ax - b\|^2\]
Consider the following optimization problem
\begin{equation}
  \label{eq:minF}
\min_{x \in \mathbb{R}^N} F(x)  
\end{equation}
 First, we have the following elementary propositions and lemmas. 
\begin{proposition} \label{1}
     Denote \(x_* = \min_{x \in \mathbb{R}^N} F(x), H = A^\top A\). Then we have\[\forall x \in \mathbb{R}^N, \nabla F(x) = H(x-x_*)\]
\end{proposition}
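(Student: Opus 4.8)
The plan is to compute \(\nabla F\) directly from the definition and then use the first-order optimality condition at \(x_*\). First I would expand \(F(x) = \frac12\|Ax-b\|^2 = \frac12\big(x^\top A^\top A x - 2 b^\top A x + b^\top b\big)\) and differentiate term by term, which gives \(\nabla F(x) = A^\top A x - A^\top b = Hx - A^\top b\). This is the standard least-squares gradient and requires only elementary matrix calculus.

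Next I would invoke the fact that \(F\) is convex (its Hessian is \(H = A^\top A \succeq 0\)), so a point \(x_*\) is a global minimizer if and only if \(\nabla F(x_*) = 0\), i.e. \(Hx_* = A^\top b\). Substituting \(A^\top b = Hx_*\) into the expression for \(\nabla F(x)\) yields \(\nabla F(x) = Hx - Hx_* = H(x-x_*)\), which is the claim.

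The only point needing a word of care is that \(x_*\) as written in the statement is really \(\arg\min_{x} F(x)\) rather than \(\min_x F(x)\), and that this \(\arg\min\) may not be unique when \(A\) does not have full column rank. However, the normal equations \(Hx = A^\top b\) are always consistent (since \(A^\top b \in \mathrm{range}(A^\top) = \mathrm{range}(H)\)), so at least one minimizer exists, and the identity \(\nabla F(x) = H(x-x_*)\) holds for \emph{any} choice of minimizer \(x_*\) because \(Hx_*\) is the same for all of them. I expect no real obstacle here; the argument is a routine two-line computation, and the only thing worth flagging explicitly is this well-posedness remark about \(x_*\).
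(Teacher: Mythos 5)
Your argument is correct and is essentially the paper's own proof: compute \(\nabla F(x) = Hx - A^\top b\), use convexity to get \(\nabla F(x_*) = 0\), and substitute. The additional remark that \(Hx_*\) is independent of the choice of minimizer (when \(A\) lacks full column rank) is a welcome clarification, but it does not change the route.
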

\begin{proof}
    Notice that \(\nabla F(x) = Hx - A^\top b\), and as \(F(x)\) is convex, \(\nabla F(x_*) = Hx_* - A^\top b = 0\), and the result follows from direct calculation. 
\end{proof}
\noindent
The following result is from Steinerberger~\cite{steinerberger2021randomized}. Here we split it apart as a separate lemma.
\begin{lemma} \label{2}
    Let \(\sigma(A)\) denote the singular values of \(A\), and let \(\sigma_{\min}\) and \(\sigma_{\max}\) be the minimum and maximum singular value of \(A\). Let \(v_\ell\) be the right singular vector of \(A\) corresponding to the singular value \(\sigma_\ell\), hence, $Hv_i=\sigma_i^2v_i$. Then we have \[\forall x \in \mathbb{R}^N, \langle H(x-x_*), v_\ell\rangle = \sigma_\ell^2 \langle x-x_*, v_\ell\rangle\]
\end{lemma}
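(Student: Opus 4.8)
The plan is to exploit the symmetry of the Hessian $H = A^\top A$ together with the fact that $v_\ell$ is an eigenvector of $H$. Write $y = x - x_*$, so the claim becomes the statement that $\langle Hy, v_\ell\rangle = \sigma_\ell^2\langle y, v_\ell\rangle$ for every $y \in \mathbb{R}^N$; the shift by $x_*$ plays no role and can be reinstated at the end.

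First I would record the two ingredients. (i) $H$ is symmetric, since $H^\top = (A^\top A)^\top = A^\top A = H$, hence $H$ is self-adjoint for the Euclidean inner product. (ii) The right singular vectors of $A$ are eigenvectors of $H$: from an SVD $A = U\Sigma V^\top$ we get $H = V\Sigma^2 V^\top$, so $Hv_\ell = \sigma_\ell^2 v_\ell$, which is exactly the relation $Hv_i = \sigma_i^2 v_i$ already noted in the statement of the lemma. With these in hand the computation is a single line:
\[
\langle Hy, v_\ell\rangle = \langle y, Hv_\ell\rangle = \langle y, \sigma_\ell^2 v_\ell\rangle = \sigma_\ell^2\langle y, v_\ell\rangle,
\]
where the first equality uses self-adjointness of $H$. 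Substituting $y = x - x_*$ yields $\langle H(x-x_*), v_\ell\rangle = \sigma_\ell^2\langle x-x_*, v_\ell\rangle$, as desired.

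There is essentially no obstacle here, which is consistent with the remark in the text that the analysis is trivial. The only point worth a sentence of care is that $v_\ell$ must be a genuine eigenvector of $H$ — any unit vector in the appropriate singular subspace works, including in the presence of a repeated or zero singular value — but since the lemma fixes $v_\ell$ precisely as the right singular vector satisfying $Hv_\ell = \sigma_\ell^2 v_\ell$, nothing further needs to be checked. Note also that Proposition~\ref{1} is not needed for this lemma; it only becomes relevant when Lemma~\ref{2} is later combined with the identity $\nabla F(x) = H(x-x_*)$.
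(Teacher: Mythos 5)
Your proof is correct, and it takes a slightly different (arguably cleaner) route than the paper's. You move $H$ across the inner product using its symmetry, $\langle H y, v_\ell\rangle = \langle y, Hv_\ell\rangle = \sigma_\ell^2\langle y, v_\ell\rangle$, relying only on $H = A^\top A$ being self-adjoint and on the eigen-relation $Hv_\ell = \sigma_\ell^2 v_\ell$ already recorded in the lemma statement. The paper instead writes $\langle H(x-x_*), v_\ell\rangle = \langle A(x-x_*), Av_\ell\rangle$, expands $x - x_*$ in the orthonormal basis of right singular vectors, applies $Av_i = \sigma_i u_i$, and uses orthonormality of the $u_i$ to collapse the sum. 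The two arguments are equally valid and equally short; yours avoids any basis expansion and makes the underlying reason (self-adjointness plus the eigenvector property) explicit, while the paper's computation stays closer to the SVD language $A = U\Sigma V^\top$ that the rest of the analysis is phrased in. Your closing remarks — that Proposition~\ref{1} is not needed here, and that the only hypothesis really used is $Hv_\ell = \sigma_\ell^2 v_\ell$ (which handles repeated or zero singular values automatically) — are accurate.
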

\begin{proof}
    We have \(Av_i = \sigma_i u_i\), where \(A = U\Sigma V^\top\), and
    \begin{align*}
        \langle H(x-x_*), v_\ell\rangle
        &= \langle A(x-x_*), Av_\ell\rangle \\
        &= \left\langle A\left(\sum_{i=1}^{N} \langle x-x_*, v_i\rangle v_i\right), Av_\ell\right\rangle \\
        &= \left\langle \sum_{i=1}^{N}\sigma_i \langle x-x_*, v_i\rangle u_i, \sigma_\ell u_\ell\right\rangle \\
        &= \sigma_\ell^2 \langle x-x_*, v_\ell\rangle
    \end{align*}
\end{proof}
\noindent
Now we implement gradient descent on \(F(x)\) with the update rule 
\[ x_{k+1} = x_k - \alpha_k \nabla F(x_k) \]
on the optimization problem \eqref{eq:minF}
\noindent
Then we have the following theorem, giving the value of \(\langle x_{n+1}-x_*, v_\ell\rangle\).
\begin{theorem} \label{3}
    If we apply gradient descent with step size \(\alpha_k\) at the \(k\)-th iteration, we have \[\langle x_{n+1}-x_*, v_\ell\rangle = \left(\prod_{k=0}^{n} (1-\alpha_k \sigma_\ell^2)\right) \langle x_0-x_*, v_\ell\rangle\]
\end{theorem}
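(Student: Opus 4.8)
The plan is to proceed by induction on $n$, using Proposition~\ref{1} and Lemma~\ref{2} to reduce each step of the gradient descent recursion to a scalar recursion in the single coordinate $\langle x_k - x_*, v_\ell\rangle$. First I would rewrite the update rule as $x_{k+1} - x_* = (x_k - x_*) - \alpha_k \nabla F(x_k)$, and invoke Proposition~\ref{1} to replace $\nabla F(x_k)$ by $H(x_k - x_*)$, giving $x_{k+1} - x_* = (x_k - x_*) - \alpha_k H(x_k - x_*)$. Taking the inner product of both sides with $v_\ell$ and using linearity, the term $\langle H(x_k - x_*), v_\ell\rangle$ becomes $\sigma_\ell^2 \langle x_k - x_*, v_\ell\rangle$ by Lemma~\ref{2}. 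This yields the one-step scalar recursion
\[
\langle x_{k+1} - x_*, v_\ell\rangle = (1 - \alpha_k \sigma_\ell^2)\,\langle x_k - x_*, v_\ell\rangle .
\]

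From here the theorem follows by a routine induction: the base case $n=0$ is exactly the one-step recursion with $k=0$, and the inductive step multiplies the product $\prod_{k=0}^{n-1}(1-\alpha_k\sigma_\ell^2)$ by the extra factor $(1-\alpha_n\sigma_\ell^2)$ coming from applying the one-step recursion at $k=n$. Since $v_\ell$ is a fixed eigenvector of $H$ and the recursion decouples completely across the eigenbasis, no cross terms appear, and the telescoping product drops out immediately.

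There is no substantive obstacle here; the only thing to be slightly careful about is bookkeeping of the index ranges — matching the product $\prod_{k=0}^{n}$ in the statement with the iterates $x_0, x_1, \ldots, x_{n+1}$, and making sure the inductive hypothesis is stated for the correct value of $n$. One should also note explicitly that Lemma~\ref{2} applies to \emph{any} $x \in \mathbb{R}^N$, in particular to each iterate $x_k$, so that the reduction is legitimate at every step. Everything else is direct calculation, which is why the excerpt itself flags the analysis as trivial; the value of the statement is expository — it makes transparent that the decay factor for the $\ell$-th component is governed entirely by $\sigma_\ell^2$, so that components along large singular values are damped far more aggressively, setting the stage for the stochastic analysis that follows.
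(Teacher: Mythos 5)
Your argument is correct and matches the paper's own proof: both derive the one-step recursion $\langle x_{k+1}-x_*, v_\ell\rangle = (1-\alpha_k\sigma_\ell^2)\langle x_k-x_*, v_\ell\rangle$ from Proposition~\ref{1} and Lemma~\ref{2}, then apply it recursively over $k=0,\ldots,n$. Your extra care with index bookkeeping and the remark that Lemma~\ref{2} holds for every iterate are fine but do not change the substance.
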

\begin{proof}
    Using Proposition~\ref{1} and Lemma~\ref{2}, \(\forall 0 \leq k \leq n\), we have
    \begin{align*}
        \langle x_{k+1}-x_*, v_\ell \rangle
        &= \langle x_k-x_*, v_\ell\rangle - \alpha_k\langle\nabla F(x_k), v_\ell\rangle \\
        &= \langle x_k-x_*, v_\ell\rangle - \alpha_k\langle H(x_k-x_*), v_\ell\rangle \\
        &= (1-\alpha_k\sigma_\ell^2)\langle x_k-x_*, v_\ell\rangle
    \end{align*}
    Applying the relation recursively gives us the result. 
\end{proof}
\noindent
We call a function \(f(x)\) an \(L\)-smooth function if \(\forall x, y, \|\nabla f(x)-\nabla f(y)\| \leq L\|x-y\|\). Normally, to maximize the convergence speed when using gradient descent, for an \(L\)-smooth function, we will choose the step size \(\alpha = \frac{1}{L}\). In our setting, \(F(x)\) is \(\sigma_{\max}^2\)-smooth. Applying Theorem~\ref{3}, we have the following result.
\begin{corollary} \label{4}
    If we apply gradient descent with step size \(\alpha_k = \frac{1}{\sigma_{\max}^2}\) at the \(k\)-th iteration, we have
    \[\langle x_{n+1}-x_*, v_\ell\rangle = \left(1-\frac{\sigma_\ell^2}{\sigma_{\max}^2}\right)^{n+1} \langle x_0-x_*, v_\ell\rangle\]
\end{corollary}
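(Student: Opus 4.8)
\textbf{Proof proposal for Corollary~\ref{4}.}
The plan is to simply specialize Theorem~\ref{3} to the constant step size $\alpha_k = 1/\sigma_{\max}^2$. First I would invoke Theorem~\ref{3}, which gives the exact identity
\[
\langle x_{n+1}-x_*, v_\ell\rangle = \left(\prod_{k=0}^{n} (1-\alpha_k \sigma_\ell^2)\right) \langle x_0-x_*, v_\ell\rangle
\]
valid for any sequence of step sizes. Then I would substitute $\alpha_k = 1/\sigma_{\max}^2$ for every $k$, so that each factor in the product becomes the \emph{same} number $1 - \sigma_\ell^2/\sigma_{\max}^2$, independent of $k$.

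Next I would observe that a product of $n+1$ identical factors (for $k = 0, 1, \dots, n$) is just that factor raised to the power $n+1$, i.e.\ $\prod_{k=0}^{n}\left(1-\sigma_\ell^2/\sigma_{\max}^2\right) = \left(1-\sigma_\ell^2/\sigma_{\max}^2\right)^{n+1}$. Plugging this back into the identity from Theorem~\ref{3} yields exactly the claimed formula. One could additionally remark that $F$ is $\sigma_{\max}^2$-smooth (as noted in the text preceding the corollary), which is why this is the natural ``optimal'' constant step size to plug in, but that remark is motivational rather than part of the derivation.

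I do not anticipate any genuine obstacle here: the entire content is the telescoping-to-a-power step, and all the analytic work (differentiating the expansion of the gradient-descent recursion, using Proposition~\ref{1} and Lemma~\ref{2}) has already been carried out in the proof of Theorem~\ref{3}. The only thing worth a sentence of care is making sure the exponent is $n+1$ and not $n$ — the product runs over the $n+1$ indices $k=0,\dots,n$, matching the $n+1$ gradient-descent steps taken to reach $x_{n+1}$ from $x_0$.
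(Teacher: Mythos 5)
Your proposal is correct and matches the paper's approach exactly: the corollary is obtained by specializing the product formula of Theorem~\ref{3} to the constant step size $\alpha_k = 1/\sigma_{\max}^2$, turning the product of $n+1$ identical factors into the power $n+1$. Nothing further is needed.
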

\noindent
This results tells us that if we implement gradient descent on a quadratic function with the step size \(\alpha = \frac{1}{\sigma_{\max}^2}\), \(\langle x_k-x_*, v_\ell\rangle\) converges linearly for all \(\ell\). In particular, just after one single step, \(\langle x_1-x_*, v_1\rangle\) will become 0. That partially explains why the specific choice of step size \(\alpha = \frac{1}{L}\) is favorable.

In practice, when the size of the matrix \(A\) is too large, gradient descent is often impractical. A more commonly used approach is stochastic gradient descent. Now we explore the eigencomponent convergence of stochastic gradient descent by bounding \(\mathbb{E}[\langle x_n-x_*, v_\ell\rangle]\) and \(\mathbb{E}[\langle x_n-x_*, v_\ell\rangle ^2]\).

\section{Eigencomponent Convergence of SGD: General Results}
We now turn to the main subject of the paper: SGD.

Denote \(a_i\) as the \(i\)-th row of \(A \in \mathbb{R}^{M\times N}\), and \(b_i\) as the \(i\)-th entry of \(b\). For the optimization problem~\eqref{eq:minF}, if we use the following update rule 
\begin{equation} \label{eq: SGDupd}
    x_{k+1} = x_k + \alpha_k M \left( b_{i_k} - \langle a_{i_k}, x_k \rangle \right) a_{i_k}
\end{equation}
In this paper, we choose \(i_k\) uniformly and independently from \(1\) to \(M\), \(\forall k\). Now, we can interpret this process as using stochastic gradient descent on \( F(x) = \frac{1}{M} \sum_{i=1}^{M} f_i(x) \) where each \(f_i(x)\) is defined as \(\frac{M}{2} (a_i^\top x - b_i)^2\), with \(k\)-th step size \(\alpha_k\). The definition of \(f_i(x)\) explains the reason why~\eqref{eq: SGDupd} contains \(M\), the number of rows of \(A\). In this context, we have the following results.
\begin{theorem} \label{5}
    If we apply stochastic gradient descent with step size \(\alpha_k\) at \(k\)-th iteration, we have \[\mathbb{E}[\langle x_{n+1} - x_*, v_\ell \rangle | x_0] = \left(\prod_{k=0}^{n} \left( 1 - \alpha_k \sigma_\ell^2 \right)\right)  \langle x_{0} - x_*, v_\ell \rangle\]
\end{theorem}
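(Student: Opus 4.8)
The plan is to reduce the stochastic update to the deterministic one \emph{in expectation}, exploiting that the increment in~\eqref{eq: SGDupd} is an unbiased estimate of $-\nabla F$, and then to recurse exactly as in the proof of Theorem~\ref{3}. Let $\mathcal{F}_k = \sigma(x_0, i_0, \ldots, i_{k-1})$ denote the natural filtration, so that $x_k$ is $\mathcal{F}_k$-measurable while $i_k$ is independent of $\mathcal{F}_k$. First I would compute the one-step conditional expectation: averaging~\eqref{eq: SGDupd} over the uniform draw $i_k \in \{1, \ldots, M\}$,
\[
\mathbb{E}[x_{k+1} \mid \mathcal{F}_k] = x_k + \alpha_k M \cdot \frac{1}{M} \sum_{i=1}^{M} \left( b_i - \langle a_i, x_k \rangle \right) a_i = x_k + \alpha_k \left( A^\top b - A^\top A\, x_k \right),
\]
using $\sum_i b_i a_i = A^\top b$ and $\sum_i a_i a_i^\top = A^\top A = H$. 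By Proposition~\ref{1}, $A^\top b - H x_k = -\nabla F(x_k) = -H(x_k - x_*)$, so $\mathbb{E}[x_{k+1} \mid \mathcal{F}_k] = x_k - \alpha_k H(x_k - x_*)$; this is precisely the deterministic GD step, now holding in conditional expectation.

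Next I would take the inner product with $v_\ell$, subtract $x_*$, and apply linearity followed by Lemma~\ref{2}:
\[
\mathbb{E}\bigl[\langle x_{k+1} - x_*, v_\ell \rangle \mid \mathcal{F}_k\bigr] = \langle x_k - x_*, v_\ell \rangle - \alpha_k \langle H(x_k - x_*), v_\ell \rangle = \left( 1 - \alpha_k \sigma_\ell^2 \right) \langle x_k - x_*, v_\ell \rangle.
\]
Writing $Y_k := \langle x_k - x_*, v_\ell \rangle$, which is $\mathcal{F}_k$-measurable, this says $\mathbb{E}[Y_{k+1} \mid \mathcal{F}_k] = (1 - \alpha_k \sigma_\ell^2)\, Y_k$. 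Since $\mathcal{F}_0 = \sigma(x_0) \subseteq \mathcal{F}_k$ for all $k$ and the step sizes are deterministic, the tower property gives $\mathbb{E}[Y_{k+1} \mid x_0] = (1 - \alpha_k \sigma_\ell^2)\, \mathbb{E}[Y_k \mid x_0]$. Iterating this from $k = n$ down to $k = 0$, together with $\mathbb{E}[Y_0 \mid x_0] = Y_0$, produces the claimed product formula.

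I do not anticipate a real obstacle: the only computational content is the unbiasedness identity $\mathbb{E}[x_{k+1} \mid \mathcal{F}_k] = x_k - \alpha_k \nabla F(x_k)$, which is precisely where the factor $M$ in~\eqref{eq: SGDupd} enters. The one point deserving mild care is the measure-theoretic bookkeeping---chaining $\mathbb{E}[\,\cdot \mid x_0]$ after $\mathbb{E}[\,\cdot \mid \mathcal{F}_k]$, which is valid because $\sigma(x_0) \subseteq \mathcal{F}_k$---but this is routine once the one-step recursion for $Y_k$ is in hand. Alternatively, one could avoid filtrations entirely and imitate the proof of Theorem~\ref{3} literally, conditioning step by step on $x_0, \ldots, x_k$.
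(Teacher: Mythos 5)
Your proposal is correct and follows essentially the same route as the paper: one-step unbiasedness of the stochastic gradient reduces the update to the deterministic GD recursion in conditional expectation, then Proposition~\ref{1} and Lemma~\ref{2} give $\mathbb{E}[\langle x_{k+1}-x_*, v_\ell\rangle \mid x_k] = (1-\alpha_k\sigma_\ell^2)\langle x_k-x_*, v_\ell\rangle$, which is iterated. Your explicit use of the filtration and tower property merely makes rigorous the recursion step that the paper states informally as ``applying the relation recursively.''
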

\begin{proof}
    Notice that the update rule defined above gives us an unbiased estimation of \(\nabla F\). Therefore, we have
    \begin{align*}
        \mathbb{E}[\langle x_{k+1} - x_*, v_\ell \rangle | x_k] 
        &=\langle x_k - x_* - \sum_{i=1}^{M}\alpha_k \nabla f_i(x), v_\ell \rangle\\
        &=\langle x_k - x_*, v_\ell \rangle - \alpha_k \langle H(x_k - x_*), v_\ell \rangle\\
        &=(1 - \alpha_k \sigma_\ell^2) \langle x_k - x_*, v_\ell \rangle. 
    \end{align*}
    Applying the relation recursively gives us the result. 
\end{proof}
\noindent
Comparing to Theorem~\ref{3}, we see that in expectation, \(\mathbb{E}[\langle x_n-x_*, v_\ell \rangle]\) using SGD takes the same format as \(\langle x_n-x_*, v_\ell \rangle\) using GD. But in SGD, we also want to give an upper bound on \(\mathbb{E}[\langle x_n - x_*, v_\ell\rangle^2]\) to get bounds on the variance. To do so, we need the following results.
\begin{proposition} \label{6}
    For a general \(\mu\)-convex function \(f(x)\), if we denote \(f_* = \min_{x \in \mathbb{R}^N} f(x)\), we have
    \[2\mu(f(x)-f_*) \leq \|\nabla f(x)\|^2\]
\end{proposition}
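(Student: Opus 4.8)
The plan is to exploit the defining quadratic lower bound of $\mu$-strong convexity and then minimize both sides over the free variable. Recall that $f$ being $\mu$-convex means that for all $x, y \in \mathbb{R}^N$,
\[ f(y) \ge f(x) + \langle \nabla f(x), y - x \rangle + \frac{\mu}{2}\|y - x\|^2 . \]
First I would fix $x$ and regard the right-hand side as a function of $y$, say $g(y)$. Since $f(y) \ge g(y)$ holds pointwise, taking the infimum over $y$ immediately gives $f_* = \min_y f(y) \ge \inf_y g(y)$.

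Next I would evaluate $\inf_y g(y)$ explicitly. The map $g$ is a strictly convex quadratic in $y$ (here $\mu > 0$ is exactly what makes it coercive, so the infimum is attained), and its unique minimizer is the stationary point $y_* = x - \frac{1}{\mu}\nabla f(x)$, obtained by solving $\nabla g(y) = \nabla f(x) + \mu(y - x) = 0$. Substituting $y_*$ back into $g$ and simplifying gives $g(y_*) = f(x) - \frac{1}{2\mu}\|\nabla f(x)\|^2$.

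Combining the two steps yields $f_* \ge f(x) - \frac{1}{2\mu}\|\nabla f(x)\|^2$, and rearranging produces the claimed inequality $2\mu\bigl(f(x) - f_*\bigr) \le \|\nabla f(x)\|^2$ (this is the standard route by which strong convexity implies a Polyak--\L{}ojasiewicz-type bound). I do not anticipate a genuine obstacle here; the only mildly delicate points are to note that $g$ actually attains its minimum because $\mu > 0$, and that passing from the pointwise bound $f \ge g$ to $\min f \ge \min g$ is legitimate — both are routine.
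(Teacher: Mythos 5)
Your proposal is correct and follows essentially the same route as the paper: minimize the right-hand side of the strong convexity inequality over $y$ at the stationary point $y_* = x - \frac{1}{\mu}\nabla f(x)$, obtain $f(y) \ge f(x) - \frac{1}{2\mu}\|\nabla f(x)\|^2$, and conclude by taking the minimum over $y$. The only difference is your (welcome but routine) extra care in noting attainment of the minimum for $\mu > 0$.
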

\begin{proof}
    Since \(f\) is \(\mu\)-strongly convex, we have \(\forall x, y\):
    \[ f(y) \geq f(x) + \langle \nabla f(x), y-x\rangle + \frac{\mu}{2} \|y-x\|^2 \]
    By minimizing the right hand side with respect to \(y\), we find the minimizer is \(x - \frac{1}{\mu}\nabla f(x)\). 
    Therefore, we have \(\forall x, y\): \[ f(y) \geq f(x) - \frac{1}{2\mu} \|\nabla f(x)\|^2 \]
    The result then follows since this holds for all \(y\). 
\end{proof}
\begin{lemma} \label{7}
    Let \(\tilde{L} = \max_{1\leq i\leq m} \|a_i\|^2\), \(c(A) = \frac{\sigma_{\max}^2}{\sigma_{\min}^2}=(\kappa_2(A))^2\), and \(F_* = \min_{x \in \mathbb{R}^N} F(x)\), then after $k$ iterations of SGD, we have
    \[\mathbb{E} [\langle \nabla f_i(x_k), v_\ell \rangle ^2 | x_k] \leq  M\tilde{L}c(A)\sigma_{\max}^2\|x_k-x_*\|^2 + 2\alpha_k^2M\tilde{L}F_*\]
\end{lemma}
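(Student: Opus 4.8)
The plan is to reduce the conditional second moment of $\langle \nabla f_i(x_k), v_\ell\rangle$ to the residual $\|Ax_k-b\|^2 = 2F(x_k)$, and then to peel off the (possibly nonzero) optimal value $F_*$ from a part controlled by $\|x_k-x_*\|^2$. First I would write the stochastic gradient explicitly: since $f_i(x) = \tfrac{M}{2}(a_i^\top x - b_i)^2$, we have $\nabla f_i(x_k) = M(a_i^\top x_k - b_i)\,a_i$, so $\langle \nabla f_i(x_k), v_\ell\rangle^2 = M^2 (a_i^\top x_k - b_i)^2 \langle a_i, v_\ell\rangle^2$. Because $i_k$ is uniform on $\{1,\dots,M\}$, taking the conditional expectation turns one factor $M$ into the averaging $1/M$ and produces a sum over the rows:
\[ \mathbb{E}\!\left[\langle \nabla f_i(x_k), v_\ell\rangle^2 \,\middle|\, x_k\right] = M \sum_{i=1}^{M} (a_i^\top x_k - b_i)^2 \,\langle a_i, v_\ell\rangle^2 . \]
Now I would bound $\langle a_i, v_\ell\rangle^2 \le \|a_i\|^2\,\|v_\ell\|^2 = \|a_i\|^2 \le \tilde L$ by Cauchy--Schwarz (using $\|v_\ell\|=1$), which pulls $\tilde L$ out of the sum and leaves $M\tilde L \sum_{i}(a_i^\top x_k - b_i)^2 = M\tilde L\|Ax_k - b\|^2 = 2M\tilde L\,F(x_k)$.

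It then remains to bound $F(x_k)$. Write $F(x_k) = F_* + \big(F(x_k) - F_*\big)$ and control the excess by $\|x_k - x_*\|^2$. Since $F$ is $\sigma_{\min}^2$-strongly convex, Proposition~\ref{6} gives $F(x_k) - F_* \le \tfrac{1}{2\sigma_{\min}^2}\|\nabla F(x_k)\|^2$; combining this with $\nabla F(x_k) = H(x_k - x_*)$ (Proposition~\ref{1}) and $\|H\| = \sigma_{\max}^2$ yields $F(x_k) - F_* \le \tfrac{1}{2\sigma_{\min}^2}\sigma_{\max}^4\|x_k-x_*\|^2 = \tfrac12\,c(A)\,\sigma_{\max}^2\|x_k-x_*\|^2$. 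Substituting into $2M\tilde L\,F(x_k)$ then gives $M\tilde L\,c(A)\,\sigma_{\max}^2\|x_k-x_*\|^2 + 2M\tilde L\,F_*$, which is the claimed inequality (with the additive constant appearing as $2M\tilde L F_*$). I would note in passing that if one does not want the $c(A)$ factor, the orthogonal split $Ax_k - b = A(x_k-x_*) + (Ax_*-b)$ — orthogonal because $\langle A(x_k-x_*),Ax_*-b\rangle = \langle x_k-x_*,\nabla F(x_*)\rangle = 0$ — gives the slightly sharper $F(x_k)-F_* = \tfrac12\|A(x_k-x_*)\|^2 \le \tfrac12\sigma_{\max}^2\|x_k-x_*\|^2$.

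There is no deep obstacle here; the lemma is mostly careful bookkeeping, and the parts that need attention are both elementary. The first is keeping the factor $M$ straight: it enters once from the definition of $f_i$ (which carries the weight $M/2$) and once from averaging $1/M$ over the rows, so the naive $M^2$ collapses to a single $M$ in the expectation. The second — the only genuinely substantive point — is handling the inconsistent case, i.e. not discarding $F_* = \min F$: this is precisely why the bound carries an additive constant rather than being purely proportional to $\|x_k-x_*\|^2$, and the identity $\nabla F(x_*) = 0$ (Proposition~\ref{1}) is what makes the split $F(x_k) = F_* + (F(x_k)-F_*)$ clean.
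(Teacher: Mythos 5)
Your proposal is correct and follows essentially the same route as the paper's proof: Cauchy--Schwarz to discard $v_\ell$, the bound $\|a_i\|^2\le\tilde L$ to reach $M\tilde L\|Ax_k-b\|^2$, and then Proposition~\ref{6} with $\mu=\sigma_{\min}^2$ together with $\nabla F(x_k)=H(x_k-x_*)$ and $\|H\|=\sigma_{\max}^2$ to split off the $2M\tilde L F_*$ term. Note that the additive constant you obtain, $2M\tilde L F_*$, is exactly what the paper's proof establishes; the factor $\alpha_k^2$ in the lemma's displayed statement is a typo (it only enters later, in $C(\alpha_k)=2\alpha_k^2M\tilde L F_*$ of Lemma~\ref{8}).
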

\begin{proof}
We have
    \begin{align*}
        \mathbb{E} [\langle \nabla f_i(x_k), v_\ell \rangle ^2 | x_k]
        &= \frac{1}{M} \sum_{i=1}^{M}\langle\nabla f_i(x_k), v_\ell\rangle ^2 \\
        &\leq \frac{1}{M} \sum_{i=1}^{M} \|\nabla f_i(x_k)\|^2 \\
        &= \frac{1}{M} \sum_{i=1}^{M} M^2 (\langle a_i, x_k\rangle - b_i)^2 \|a_i\|^2 \\
        &\leq M\tilde{L} \|Ax_k-b\|^2 \\
        &\leq \frac{M\tilde{L}}{\sigma_{\min}^2}\|H(x_k-x_*)\|^2 + 2M\tilde{L}F_* \\
        &\leq M\tilde{L}c(A)\sigma_{\max}^2\|x_k-x_*\|^2 + 2M\tilde{L}F_* 
    \end{align*}
    where the second to last inequality is obtained by Lemma~\ref{6} with the strongly convex parameter \(\mu = \sigma_{\min}^2\). 
\end{proof}
\noindent
Notice that the first inequality in the proof of Lemma~\ref{7} is loose and independent on \(\ell\), so there might be room for improvements. Still, our bound is enough to explain the asymptotic behavior of \(\langle x_k-x_*, v_\ell\rangle\) and illustrate the presence of phase transition.

The following lemma reveals the structure of the upper bound of \(\mathbb{E}[\langle x_k-x_*\rangle ^2]\) and gives the key recurrence relation that will be repetitively used later on.

\begin{lemma} \label{8}
    Define \(A(\alpha_k) = 1-2\alpha_k \sigma_\ell^2\), \(B(\alpha_k) = \alpha_k^2M\tilde{L}c(A)\sigma_{\max}^2\), and \(C(\alpha_k) = 2\alpha_k^2 M \tilde{L} F_*\), through the updates of SGD, we have
    \[\mathbb{E} [\langle x_{k+1} - x_*, v_\ell \rangle^2 | x_k] \leq A(\alpha_k) \langle x_k-x_*, v_\ell \rangle^2 + B(\alpha_k)\|x_k-x_*\|^2 + C(\alpha_k)\]
\end{lemma}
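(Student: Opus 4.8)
The plan is to expand the SGD update $x_{k+1} = x_k + \alpha_k M(b_{i_k} - \langle a_{i_k}, x_k\rangle)a_{i_k}$, which in the notation of the excerpt is $x_{k+1} - x_* = (x_k - x_*) - \alpha_k \nabla f_{i_k}(x_k)$, take the inner product with $v_\ell$, square, and then take the conditional expectation given $x_k$. Writing $d_k := \langle x_k - x_*, v_\ell\rangle$ and $g := \langle \nabla f_{i_k}(x_k), v_\ell\rangle$, we get
\[
\langle x_{k+1} - x_*, v_\ell\rangle^2 = d_k^2 - 2\alpha_k d_k\, g + \alpha_k^2 g^2 .
\]
Taking $\mathbb{E}[\cdot\,|\,x_k]$, the cross term becomes $-2\alpha_k d_k\, \mathbb{E}[g\,|\,x_k]$; here I would use the unbiasedness of the stochastic gradient (already invoked in the proof of Theorem~\ref{5}), namely $\mathbb{E}[\nabla f_{i_k}(x_k)\,|\,x_k] = \nabla F(x_k) = H(x_k - x_*)$, together with Lemma~\ref{2} to obtain $\mathbb{E}[g\,|\,x_k] = \sigma_\ell^2 d_k$. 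Hence the first two terms combine to $(1 - 2\alpha_k\sigma_\ell^2)\,d_k^2 = A(\alpha_k)\,d_k^2$.

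For the last term, $\alpha_k^2\,\mathbb{E}[g^2\,|\,x_k]$, I would apply Lemma~\ref{7}, which bounds $\mathbb{E}[\langle \nabla f_i(x_k), v_\ell\rangle^2\,|\,x_k]$ by $M\tilde{L}c(A)\sigma_{\max}^2\|x_k - x_*\|^2 + 2M\tilde{L}F_*$. Multiplying through by $\alpha_k^2$ yields exactly $B(\alpha_k)\|x_k - x_*\|^2 + C(\alpha_k)$ with the stated definitions $B(\alpha_k) = \alpha_k^2 M\tilde{L}c(A)\sigma_{\max}^2$ and $C(\alpha_k) = 2\alpha_k^2 M\tilde{L}F_*$. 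Summing the three contributions gives the claimed inequality
\[
\mathbb{E}[\langle x_{k+1} - x_*, v_\ell\rangle^2\,|\,x_k] \le A(\alpha_k)\,d_k^2 + B(\alpha_k)\|x_k - x_*\|^2 + C(\alpha_k).
\]

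There is essentially no hard obstacle here: the proof is a one-line algebraic expansion followed by a substitution of Lemma~\ref{2} into the cross term and Lemma~\ref{7} into the quadratic term. The only point requiring a modicum of care is being explicit that the factor $M$ from the update rule and the factor $M$ built into the definition $f_i(x) = \tfrac{M}{2}(a_i^\top x - b_i)^2$ are already absorbed into the constants $\tilde{L}$-bearing expressions of Lemma~\ref{7}, so that $\alpha_k^2\,\mathbb{E}[g^2\,|\,x_k]$ really does produce the stated $B$ and $C$ without a stray multiplicative factor. One should also note that $A(\alpha_k)$ may be negative for large $\alpha_k$, but since the inequality is stated as an upper bound this causes no difficulty — it is simply retained as written.
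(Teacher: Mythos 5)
Your proposal is correct and follows essentially the same route as the paper's own proof: expand the square, use unbiasedness plus Lemma~\ref{2} to turn the cross term into $-2\alpha_k\sigma_\ell^2\langle x_k-x_*,v_\ell\rangle^2$, and bound the quadratic term via Lemma~\ref{7} (in the form its proof actually establishes, i.e.\ with $2M\tilde{L}F_*$ rather than the stray $\alpha_k^2$ in its statement), yielding exactly $A(\alpha_k)$, $B(\alpha_k)$, $C(\alpha_k)$. No gaps.
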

\begin{proof}
    \begin{align*}
        \mathbb{E} [\langle x_{k+1} - x_*, v_\ell \rangle^2 | x_k]
        &= \mathbb{E} [\langle x_k-x_* - \alpha_k \nabla f_i(x_k), v_\ell\rangle^2 | x_k] \\
        &= \langle x_k-x_*, v_\ell\rangle^2 - 2\alpha_k \langle x_k-x_*, v_\ell\rangle \mathbb{E} [\langle \nabla f_i(x_k), v_\ell \rangle | x_k] + \alpha_k^2 \mathbb{E} [\langle\nabla f_i(x_k), v_\ell\rangle^2 | x_k] \\
        &= \langle x_k-x_*, v_\ell\rangle^2 - 2\alpha_k \langle x_k-x_*, v_\ell\rangle \langle H(x_k-x_*), v_\ell \rangle + \alpha_k^2 \mathbb{E} [\langle\nabla f_i(x_k), v_\ell\rangle^2 | x_k ] \\
        &= \left(1-2\alpha_k \sigma_\ell^2\right) \langle x_k-x_*, v_\ell \rangle^2 + \alpha_k^2 \mathbb{E} [\langle\nabla f_i(x_k), v_\ell\rangle^2 | x_k] \\
        &\leq \left(1-2\alpha_k \sigma_\ell^2\right) \langle x_k-x_*, v_\ell \rangle^2 + \alpha_k^2M\tilde{L}c(A)\sigma_{\max}^2\|x_k-x_*\|^2 + 2\alpha_k^2 M\tilde{L}F_*
    \end{align*}
    Where the last inequality follows from Lemma~\ref{7}. 
\end{proof}
\noindent
Lemma~\ref{8} gives a recurrence relation on $\mathbb{E} [\langle x_{k+1} - x_*, v_\ell \rangle^2 | x_k]$
but \(\| x_k-x_*\|^2\) appears on the right hand side of the inequality. Its bound is given by the following lemma.
\begin{lemma} \label{9}
    If we choose step size \(\alpha_k\) such that \(\forall k \geq 0\), \(\alpha_k \leq \frac{1}{2M\tilde{L}}\), and define 
    \newline \(p(\alpha_k) = 1-\alpha_k\sigma_{\min}^2\), and \(q(\alpha_k) = 2\alpha_k^2\sigma^2\), then we have
    \[\mathbb{E}[\|x_{k+1} - x_*\|^2 | x_k] \leq p(\alpha_k)\|x_k-x_*\|^2 + q(\alpha_k)\]
\end{lemma}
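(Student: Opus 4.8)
The plan is to expand the squared SGD update directly, exactly as in the proof of Lemma~\ref{8}, but tracking the full vector error $\|x_{k+1}-x_*\|^2$ instead of a single eigencomponent. Writing the update~\eqref{eq: SGDupd} as $x_{k+1} = x_k - \alpha_k \nabla f_{i_k}(x_k)$ with $\nabla f_{i_k}(x_k) = M(\langle a_{i_k},x_k\rangle - b_{i_k})a_{i_k}$, I would first compute
\[
\mathbb{E}[\|x_{k+1}-x_*\|^2 \mid x_k] = \|x_k-x_*\|^2 - 2\alpha_k \langle x_k-x_*, \mathbb{E}[\nabla f_{i}(x_k)\mid x_k]\rangle + \alpha_k^2 \mathbb{E}[\|\nabla f_i(x_k)\|^2\mid x_k].
\]
The cross term simplifies using unbiasedness (as noted in the proof of Theorem~\ref{5}): $\mathbb{E}[\nabla f_i(x_k)\mid x_k] = \nabla F(x_k) = H(x_k-x_*)$ by Proposition~\ref{1}, so $\langle x_k-x_*, H(x_k-x_*)\rangle = \|A(x_k-x_*)\|^2 \geq \sigma_{\min}^2\|x_k-x_*\|^2$. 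This is what will produce the $p(\alpha_k) = 1-\alpha_k\sigma_{\min}^2$ factor, provided the coefficient on the cross term dominates part of the second-order term.

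Next I would bound the second moment $\mathbb{E}[\|\nabla f_i(x_k)\|^2\mid x_k]$. Reusing the computation inside Lemma~\ref{7}, $\frac{1}{M}\sum_{i=1}^M \|\nabla f_i(x_k)\|^2 = \frac{1}{M}\sum_i M^2(\langle a_i,x_k\rangle - b_i)^2\|a_i\|^2 \leq M\tilde{L}\|Ax_k-b\|^2$. Here I would split $\|Ax_k-b\|^2$ using the (by now standard) inequality $\|Ax_k-b\|^2 \leq 2\|A(x_k-x_*)\|^2 + 2\|Ax_*-b\|^2 = 2\|A(x_k-x_*)\|^2 + 4F_*$ — more precisely, via the convexity bound of Proposition~\ref{6} as in Lemma~\ref{7}, one gets $\|Ax_k-b\|^2 \leq \frac{1}{\sigma_{\min}^2}\|H(x_k-x_*)\|^2 + 2F_* \leq \frac{\sigma_{\max}^2}{\sigma_{\min}^2}\cdot\sigma_{\max}^2\cdot\ldots$; but for the clean statement here the cruder route through $\|A(x_k-x_*)\|^2 \leq \sigma_{\max}^2\|x_k-x_*\|^2$ is what matches $q(\alpha_k)=2\alpha_k^2\sigma^2$ (I read $\sigma^2$ as $\sigma_{\max}^2$, consistent with the coefficient $2M\tilde{L}F_*$ being absorbed; the precise constant and the meaning of the bare symbol $\sigma$ should be pinned down from the intended statement). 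Assembling,
\[
\mathbb{E}[\|x_{k+1}-x_*\|^2\mid x_k] \leq \big(1 - 2\alpha_k\sigma_{\min}^2 + 2\alpha_k^2 M\tilde{L}\sigma_{\max}^2\big)\|x_k-x_*\|^2 + \big(\text{constant}\big)\alpha_k^2.
\]

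The final step is where the hypothesis $\alpha_k \leq \frac{1}{2M\tilde{L}}$ is used: it forces $2\alpha_k^2 M\tilde{L}\sigma_{\max}^2 \leq \alpha_k\sigma_{\max}^2$, which is not quite $\leq \alpha_k\sigma_{\min}^2$, so the contraction factor one actually obtains cleanly is $1-2\alpha_k\sigma_{\min}^2 + \alpha_k\sigma_{\max}^2$; to reach exactly $p(\alpha_k) = 1-\alpha_k\sigma_{\min}^2$ one needs the second-moment term bounded by $\alpha_k\sigma_{\min}^2\|x_k-x_*\|^2$, which follows if the step-size cap is instead stated relative to $\sigma_{\min}^2/(M\tilde{L}\sigma_{\max}^2)$ — i.e. the stated bound $\alpha_k \le \frac{1}{2M\tilde L}$ likely suppresses a $c(A)$ or $\sigma_{\min}^2/\sigma_{\max}^2$ factor, or the quadratic term is meant to be folded differently. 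I expect reconciling the exact constants in $p,q$ with the exact step-size restriction to be the one genuinely fiddly point; the structure of the argument — expand the square, use unbiasedness on the linear term, bound the second moment by $M\tilde{L}\|Ax_k-b\|^2$ and then by residual-plus-$F_*$, and finally use the step-size cap to convert the $\alpha_k^2$ quadratic-in-error term into an $\alpha_k$ reduction — is otherwise routine and parallels Lemmas~\ref{7} and~\ref{8} closely.
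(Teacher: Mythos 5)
Your skeleton (expand the square, use unbiasedness on the cross term, bound the second moment) is the right starting point, but the way you handle the second moment leaves a genuine gap, and you acknowledge it yourself: bounding \(\mathbb{E}[\|\nabla f_i(x_k)\|^2\mid x_k]\) by \(M\tilde L\sigma_{\max}^2\|x_k-x_*\|^2 + \text{const}\) and then invoking \(\alpha_k\le \frac{1}{2M\tilde L}\) only yields the contraction factor \(1-2\alpha_k\sigma_{\min}^2+\alpha_k\sigma_{\max}^2\), which can exceed \(1\) whenever \(\sigma_{\max}^2>2\sigma_{\min}^2\), so it does not prove the stated bound with \(p(\alpha_k)=1-\alpha_k\sigma_{\min}^2\). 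Your suggested resolution --- that the step-size cap must secretly carry a \(c(A)\) factor or that the constants in the lemma need adjusting --- is not the fix; the lemma is correct as stated, and the paper proves it by directly quoting the inequality from Needell, Srebro and Ward (Theorem 2.1), namely \(\mathbb{E}[\|x_{k+1}-x_*\|^2\mid x_k]\le\bigl(1-2\alpha_k\sigma_{\min}^2(1-\alpha_k M\tilde L)\bigr)\|x_k-x_*\|^2+2\alpha_k^2\sigma^2\) with \(\sigma^2=\mathbb{E}[\|\nabla f_i(x_*)\|^2]\) (note \(\sigma^2\) is the gradient noise at the optimum, not \(\sigma_{\max}^2\) as you guessed), after which \(\alpha_k\le\frac{1}{2M\tilde L}\) gives \(1-\alpha_k M\tilde L\ge\frac12\) and hence the factor \(1-\alpha_k\sigma_{\min}^2\).

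The missing idea, if you want a self-contained argument, is co-coercivity of the individual smooth convex functions \(f_i\): each \(f_i(x)=\frac{M}{2}(a_i^\top x-b_i)^2\) is \(M\|a_i\|^2\)-smooth, hence \(M\tilde L\)-smooth, so \(\|\nabla f_i(x_k)-\nabla f_i(x_*)\|^2\le M\tilde L\,\langle\nabla f_i(x_k)-\nabla f_i(x_*),\,x_k-x_*\rangle\). Writing \(\|\nabla f_i(x_k)\|^2\le 2\|\nabla f_i(x_k)-\nabla f_i(x_*)\|^2+2\|\nabla f_i(x_*)\|^2\) and taking expectation (using \(\nabla F(x_*)=0\)) gives \(\mathbb{E}[\|\nabla f_i(x_k)\|^2\mid x_k]\le 2M\tilde L\,\langle\nabla F(x_k),x_k-x_*\rangle+2\sigma^2\). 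The crucial point is that the quadratic-in-\(\alpha_k\) term is now controlled by the \emph{same} inner product \(\langle\nabla F(x_k),x_k-x_*\rangle\) as the cross term, so it is absorbed multiplicatively, \(-2\alpha_k(1-\alpha_k M\tilde L)\langle\nabla F(x_k),x_k-x_*\rangle\), and strong convexity \(\langle H(x_k-x_*),x_k-x_*\rangle\ge\sigma_{\min}^2\|x_k-x_*\|^2\) is applied only once to this combined term; this is what removes the \(\sigma_{\max}^2\) (condition-number) dependence that blocks your route and makes the step-size condition \(\alpha_k\le\frac{1}{2M\tilde L}\) suffice.
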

\begin{proof}
    From Needell, Srebro, and Ward~\cite{needell2014stochastic} A.2, Proof of Theorem 2.1, if we define \(\sigma^2 = \mathbb{E}[\|\nabla f_i(x_*)\|^2]\), we have
    \[\mathbb{E}[\|x_{k+1} - x_*\|^2 | x_k] \leq (1 - 2\alpha_k \sigma_{\min}^2 (1 - \alpha_k M \tilde{L}))\|x_k - x_*\|^2 + 2 \alpha_k^2 \sigma^2\]
    If \(\alpha_k \leq \frac{1}{2M\tilde{L}}\), we have \(1 - 2\alpha_k \sigma_{\min}^2 (1 - \alpha_k M \tilde{L}) \leq 1-\alpha_k \sigma_{\min}^2\), completing the proof. 
\end{proof}
\noindent
Now we state the main theorem in this section, which upper bounds \(\mathbb{E}[\langle x_{n+1}-x_*, v_\ell\rangle ^2 | x_0]\). 
\begin{theorem} \label{10}
    \[\mathbb{E}[\langle x_{n+1}-x_*, v_\ell\rangle^2 | x_0] \leq A_0 \langle x_0-x_*, v_\ell\rangle^2 + B_0 \|x_0-x_*\|^2 + C_0\]
    Where \[A_0 = \prod_{i=0}^{n} A(\alpha_i)\] \[B_0 = \sum_{i=0}^{n} \tilde{B}_i\] \[C_0 = \left(\sum_{i=1}^{n} q(\alpha_{i-1})B_i\right) + \left(\sum_{i=1}^{n} A_iC(\alpha_{i-1})\right) + C(\alpha_n)\] \[\tilde{B}_i = A(\alpha_n)\cdots A(\alpha_{i+1})B(\alpha_{i})p(\alpha_{i-1})\cdots p(\alpha_0)\]
\end{theorem}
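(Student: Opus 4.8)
The plan is to iterate the two coupled recurrences from Lemma~\ref{8} and Lemma~\ref{9}. Introduce the shorthand $e_k = \mathbb{E}[\langle x_k-x_*, v_\ell\rangle^2 \mid x_0]$ and $d_k = \mathbb{E}[\|x_k-x_*\|^2 \mid x_0]$. Taking total expectations (tower property) in Lemma~\ref{8} and Lemma~\ref{9} turns the conditional inequalities into
\[
e_{k+1} \leq A(\alpha_k)\,e_k + B(\alpha_k)\,d_k + C(\alpha_k), \qquad d_{k+1} \leq p(\alpha_k)\,d_k + q(\alpha_k),
\]
valid under the step-size hypothesis $\alpha_k \leq \frac{1}{2M\tilde L}$ needed for Lemma~\ref{9}. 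First I would unroll the $d$-recurrence: by induction $d_k \leq \big(\prod_{j=0}^{k-1} p(\alpha_j)\big) d_0 + \sum_{j=1}^{k} q(\alpha_{j-1}) \prod_{i=j}^{k-1} p(\alpha_i)$, which isolates the coefficient of $\|x_0-x_*\|^2$ as $p(\alpha_{k-1})\cdots p(\alpha_0)$ plus a constant term built from the $q$'s.

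Next I would unroll the $e$-recurrence by the standard variation-of-constants / telescoping formula for linear first-order recurrences:
\[
e_{n+1} \leq \Big(\prod_{i=0}^{n} A(\alpha_i)\Big) e_0 + \sum_{i=0}^{n} \Big(\prod_{j=i+1}^{n} A(\alpha_j)\Big)\big(B(\alpha_i) d_i + C(\alpha_i)\big).
\]
The coefficient of $e_0$ is immediately $A_0 = \prod_{i=0}^n A(\alpha_i)$, matching the claim. Then I would substitute the bound on $d_i$ into the sum and collect terms by which "source" they came from: the $\|x_0-x_*\|^2$-part of $d_i$, namely $p(\alpha_{i-1})\cdots p(\alpha_0)$, multiplied by $\prod_{j=i+1}^n A(\alpha_j)$ and $B(\alpha_i)$, gives exactly $\tilde B_i = A(\alpha_n)\cdots A(\alpha_{i+1}) B(\alpha_i) p(\alpha_{i-1})\cdots p(\alpha_0)$, and summing over $i$ yields $B_0 = \sum_{i=0}^n \tilde B_i$ as the coefficient of $\|x_0-x_*\|^2$. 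Everything left over is a constant (independent of $x_0$) and must be reorganized to match $C_0$: the $\prod_{j=i+1}^n A(\alpha_j) C(\alpha_i)$ terms become (after reindexing $i \mapsto i-1$) $\sum_{i=1}^{n} A_i C(\alpha_{i-1})$ plus the stray $i=n$ term $C(\alpha_n)$ (since the empty product $\prod_{j=n+1}^n A(\alpha_j) = 1$), where $A_i$ must be read as the partial product $\prod_{j=i}^{n} A(\alpha_j)$; and the $q$-part of $d_i$ threaded through $B(\alpha_i)$ and the $A$-products contributes $\sum_{i=1}^n q(\alpha_{i-1}) B_i$ with $B_i$ likewise a partial analogue of $B_0$.

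The main obstacle I expect is purely bookkeeping: the statement defines $A_0$, $B_0$, $C_0$, $\tilde B_i$ but implicitly relies on partial versions $A_i, B_i$ (the products/sums started from index $i$ rather than $0$), and getting the index ranges, the empty-product conventions, and the off-by-one in the $p$-chains and $q$-shifts to line up exactly with the stated $C_0$ requires care. I would handle this by proving the result by induction on $n$ instead of by a single unrolling: assume the bound holds for $n-1$ with partial coefficients, apply one more step of Lemma~\ref{8} (with $d_n$ bounded via Lemma~\ref{9}), and check that the coefficients update as $A_0 \mapsto A(\alpha_n) A_0$, etc. This sidesteps the need to write down a closed form for the intermediate partial sums and makes the verification of $C_0$'s three-piece structure a routine algebraic check at the inductive step.
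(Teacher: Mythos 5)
Your proposal is correct and is in substance the paper's own argument: both proofs iterate the one-step bounds of Lemma~\ref{8} and Lemma~\ref{9} through the tower property and collect the resulting products, and your coefficient matching (the $A$-product for $\langle x_0-x_*,v_\ell\rangle^2$, the chains $A(\alpha_n)\cdots A(\alpha_{i+1})B(\alpha_i)p(\alpha_{i-1})\cdots p(\alpha_0)$ for $\|x_0-x_*\|^2$, and the reindexed $C$- and $q$-terms with $A_i$, $B_i$ read as the partial products/sums) reproduces $A_0$, $B_0$, $C_0$ exactly. The only difference is organizational: the paper (Lemma~\ref{11}) handles the bookkeeping you flag by a backward induction on the conditioning index $k$, propagating $A_k=A_{k+1}A(\alpha_k)$, $B_k=A_{k+1}B(\alpha_k)+B_{k+1}p(\alpha_k)$, $C_k=A_{k+1}C(\alpha_k)+B_{k+1}q(\alpha_k)+C_{k+1}$, which is your forward unrolling/induction run in the opposite direction.
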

\noindent The proof of Theorem~\ref{10} is given via the lemma below.
\begin{lemma} \label{11}
    For fixed \(n \in \mathbb{N}\), \(\forall k \leq n\), we have
    \begin{equation}      \label{eq:induct}      
  \mathbb{E}[\langle x_{n+1}-x_*, v_\ell\rangle^2 | x_k] \leq A_k \langle x_k-x_*, v_\ell\rangle^2 + B_k \|x_k-x_*\|^2 + C_k
    \end{equation}  
    Where \[A_k = \prod_{i=k}^{n} A(\alpha_i)\] \[B_k = \sum_{i=0}^{n-k} \tilde{B}_i\] \[C_k = \left(\sum_{i=k+1}^{n} q(\alpha_{i-1})B_i\right) + \left(\sum_{i=k+1}^{n} A_i C(\alpha_{i-1})\right) + C(\alpha_n)\] \[\tilde{B}_i = A(\alpha_n)\cdots A(\alpha_{i+k+1})B(\alpha_{i+k})p(\alpha_{i+k-1})\cdots p(\alpha_k)\]
\end{lemma}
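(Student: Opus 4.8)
The plan is to prove \eqref{eq:induct} by downward induction on $k$, from $k=n$ to $k=0$, using Lemma~\ref{8} and Lemma~\ref{9} as the one-step estimates and the Markov/tower property to splice consecutive steps together. For the base case $k=n$, inequality~\eqref{eq:induct} is exactly Lemma~\ref{8} with $k$ replaced by $n$, once one checks that the closed forms collapse correctly: $A_n=A(\alpha_n)$; the lone term $\tilde B_0$ of $B_n$ is $B(\alpha_n)$ (the products $A(\alpha_n)\cdots A(\alpha_{n+1})$ and $p(\alpha_{n-1})\cdots p(\alpha_n)$ are empty); and both sums defining $C_n$ are empty, so $C_n=C(\alpha_n)$.

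For the inductive step, assume \eqref{eq:induct} holds with $k$ replaced by $k+1$, where $0\le k\le n-1$. Since $(x_j)_{j\ge 0}$ is a Markov chain under the update~\eqref{eq: SGDupd} with i.i.d.\ choices of $i_j$, the tower property gives $\mathbb{E}[\langle x_{n+1}-x_*,v_\ell\rangle^2\mid x_k]=\mathbb{E}\big[\,\mathbb{E}[\langle x_{n+1}-x_*,v_\ell\rangle^2\mid x_{k+1}]\mid x_k\big]$. Bounding the inner expectation by the induction hypothesis and noting that under the step-size condition of Lemma~\ref{9} one has $A_{k+1}\ge 0$ and $B_{k+1}\ge 0$ (because $\alpha_j\le\frac{1}{2M\tilde L}$ and $\sigma_\ell^2\le\sigma_{\max}^2\le\|A\|_F^2\le M\tilde L$ force $A(\alpha_j)=1-2\alpha_j\sigma_\ell^2\ge0$ and $p(\alpha_j)=1-\alpha_j\sigma_{\min}^2\ge0$, while $B,C,q\ge0$ trivially), so that substituting upper bounds preserves the inequality, we obtain
\[
\mathbb{E}[\langle x_{n+1}-x_*,v_\ell\rangle^2\mid x_k] \le A_{k+1}\,\mathbb{E}[\langle x_{k+1}-x_*,v_\ell\rangle^2\mid x_k] + B_{k+1}\,\mathbb{E}[\|x_{k+1}-x_*\|^2\mid x_k] + C_{k+1}.
\]
Now apply Lemma~\ref{8} to the first conditional expectation and Lemma~\ref{9} to the second, and collect the coefficients of $\langle x_k-x_*,v_\ell\rangle^2$, of $\|x_k-x_*\|^2$, and the constant.

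The remaining work is the bookkeeping that identifies the three resulting coefficients with $A_k$, $B_k$, $C_k$. The coefficient of $\langle x_k-x_*,v_\ell\rangle^2$ is $A_{k+1}A(\alpha_k)=\prod_{i=k}^{n}A(\alpha_i)=A_k$, immediately. The coefficient of $\|x_k-x_*\|^2$ is $A_{k+1}B(\alpha_k)+B_{k+1}p(\alpha_k)$: the first summand is precisely the $i=0$ term $\tilde B_0$ at level $k$, and multiplying the definition of $B_{k+1}$ by $p(\alpha_k)$ and reindexing $i\mapsto i-1$ turns $B_{k+1}p(\alpha_k)$ into $\sum_{i=1}^{n-k}\tilde B_i$, so the total is $\sum_{i=0}^{n-k}\tilde B_i=B_k$. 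Finally, the constant is $A_{k+1}C(\alpha_k)+q(\alpha_k)B_{k+1}+C_{k+1}$, and comparing the closed forms of $C_k$ and $C_{k+1}$ shows that $C_k-C_{k+1}$ equals exactly the $i=k+1$ contributions $A_{k+1}C(\alpha_k)+q(\alpha_k)B_{k+1}$ to the two sums defining $C_k$; hence the constant is $C_k$. This closes the induction, and setting $k=0$ in \eqref{eq:induct} yields Theorem~\ref{10}.

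I expect the main (and essentially only) obstacle to be the off-by-one index matching: the quantities $\tilde B_i$, $B_k$, $C_k$ are defined relative to the current level $k$, so one must carefully re-express the level-$(k+1)$ objects at level $k$ (shifting the subscripts of $\alpha$ inside the nested products $A(\alpha_n)\cdots A(\alpha_{i+k+1})$ and $p(\alpha_{i+k-1})\cdots p(\alpha_k)$, and isolating the new first term and the new last terms of the sums inside $C_k$) and verify that everything telescopes into the stated formulas. Everything else is linearity of expectation, the Markov property, and the two one-step lemmas.
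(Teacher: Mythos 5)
Your proposal is correct and follows essentially the same route as the paper's proof: downward induction from $k=n$, the tower property to condition on $x_{k+1}$, the one-step bounds of Lemma~\ref{8} and Lemma~\ref{9}, and the same reindexing to identify $A_k$, $B_k$, $C_k$. The only addition is your explicit check that $A_{k+1},B_{k+1}\ge 0$ (needed to substitute the upper bounds), a point the paper leaves implicit.
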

\begin{proof}
    The proof is based on induction. 
Certainly when \(k=n\), the claim holds. Now assume \[\mathbb{E}[\langle x_{n+1}-x_*, v_\ell\rangle^2 | x_{k+1}] \leq A_{k+1} \langle x_{k+1}-x_*, v_\ell\rangle^2 + B_{k+1} \|x_{k+1}-x_*\|^2 + C_{k+1}\] we have
    \begin{align*}
    \mathbb{E}[\langle x_{n+1}-x_*, v_\ell\rangle^2 | x_k]
    &= \mathbb{E}[\mathbb{E}[\langle x_{n+1}-x_*, v_\ell\rangle^2 | x_{k+1}, x_k] | x_k] \\
    &= \mathbb{E}[\mathbb{E}[\langle x_{n+1}-x_*, v_\ell\rangle^2 | x_{k+1}] | x_k] \\
    &\leq \mathbb{E}[ A_{k+1} \langle x_{k+1}-x_*, v_\ell\rangle^2 + B_{k+1} \|x_{k+1}-x_*\|^2 + C_{k+1} | x_k] \\
    &= A_{k+1} \mathbb{E}[\langle x_{k+1}-x_*, v_\ell\rangle^2 | x_k] + B_{k+1} \mathbb{E}[\|x_{k+1}-x_*\|^2 | x_k] + C_{k+1} \\
    &\leq A_{k+1}\left(A(\alpha_k)\langle x_k-x_*, v_\ell\rangle^2 + B(\alpha_k)\|x_k-x_*\|^2 + C(\alpha_k)\right) \\
    &\quad + B_{k+1}\left(p(\alpha_k)\|x_k-x_*\|^2+q(\alpha_k)\right) + C_{k+1} \\
    &= \left(A_{k+1}A(\alpha_k)\right)\langle x_k-x_*, v_\ell\rangle^2 + \left(A_{k+1}B(\alpha_k)+B_{k+1}p(\alpha_k)\right)\|x_k-x_*\|^2 \\
    &\quad + \left(A_{k+1}C(\alpha_k)+B_{k+1}q(\alpha_k)+C_{k+1}\right)
\end{align*}
Therefore, we see that~\eqref{eq:induct} holds by setting
\[A_k = A_{k+1}A(\alpha_k)\] \[B_k = A_{k+1}B(\alpha_k)+B_{k+1}p(\alpha_k)\] \[C_k = A_{k+1}C(\alpha_k)+B_{k+1}q(\alpha_k)+C_{k+1}\]
Now \(A_k\) follows easily from induction. For \(B_k\), assume
\[ B_{k+1} = \sum_{i=0}^{n-k-1} \tilde{B}_i \] where \[\tilde{B}_i = A(\alpha_n)\cdots A(\alpha_{i+k+2}) B(\alpha_{i+k+1}) p(\alpha_{i+k})\cdots p(\alpha_{k+1})\]
Therefore, we have
\begin{align*}
    A_{k+1}B(\alpha_k)+B_{k+1}p(\alpha_k)
    &= A(\alpha_n)\cdots A(\alpha_{k+1}) B(\alpha_k) \\
    &\quad + \sum_{i=0}^{n-k-1} A(\alpha_n)\cdots A(\alpha_{i+k+2}) B(\alpha_{i+k+1}) p(\alpha_{i+k})\cdots p(\alpha_k) \\
    &= \sum_{i=0}^{n-k} A(\alpha_n)\cdots A(\alpha_{i+k+1})B(\alpha_{i+k})p(\alpha_{i+k-1})\cdots p(\alpha_k) \\
    &= \sum_{i=0}^{n-k} \tilde{B}_i
\end{align*}
\\
For \(C_k\), assume
\[C_{k+1} = \left(\sum_{i=k+2}^{n} q(\alpha_{i-1})B_i\right) + \left(\sum_{i=k+2}^{n} A_i C(\alpha_{i-1})\right) + C(\alpha_n)\]
then we have
\begin{align*}
    C_k 
    &= A_{k+1}C(\alpha_k)+B_{k+1}q(\alpha_k)+C_{k+1} \\
    &= \left(\sum_{i=k+1}^{n} q(\alpha_{i-1})B_i\right) + \left(\sum_{i=k+1}^{n} A_i C(\alpha_{i-1})\right) + C(\alpha_n)
\end{align*} 
\end{proof}
\noindent
In the following sections we apply the theorems in two commonly chosen step sizes: fixed step size and polynomial decaying step size.
\section{Fixed Step Size}
We begin this section by giving an upper bound of \(\mathbb{E}[\langle x_n-x_*, v_\ell\rangle]\).
\begin{theorem} \label{12}
    From Theorem~\ref{3}, if we use SGD with fixed step size \(\alpha_k = \alpha\), we then have \[\mathbb{E}[\langle x_{n+1}-x_*, v_\ell\rangle] = (1-\alpha\sigma_\ell^2)^{n+1} \langle x_0-x_*, v_\ell\rangle\]
\end{theorem}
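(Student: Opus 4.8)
The plan is to derive this as an immediate specialization of Theorem~\ref{5}. That theorem already establishes, for an arbitrary sequence of step sizes $\alpha_0,\dots,\alpha_n$, the exact identity
\[
\mathbb{E}[\langle x_{n+1} - x_*, v_\ell \rangle \mid x_0] = \left(\prod_{k=0}^{n} \left(1 - \alpha_k \sigma_\ell^2\right)\right) \langle x_0 - x_*, v_\ell \rangle,
\]
so the entire content of the present statement is the evaluation of the product when the step size is held constant.

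First I would invoke Theorem~\ref{5} with the particular choice $\alpha_k = \alpha$ for all $0 \le k \le n$. Next I would observe that every factor in the product is then equal to $1 - \alpha\sigma_\ell^2$, and since there are $n+1$ factors (indices $k = 0, 1, \dots, n$), the product collapses to $(1-\alpha\sigma_\ell^2)^{n+1}$. Substituting back yields exactly the claimed equality. Taking a further (outer) expectation over $x_0$, or simply reading the statement as conditional on $x_0$ as in Theorem~\ref{5}, gives the displayed form verbatim.

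There is essentially no obstacle here: the result is a one-line corollary of Theorem~\ref{5}, included because the fixed-step-size case is the baseline against which the subsequent decaying-step-size analyses are compared. The only point worth flagging is a bookkeeping one — the statement references Theorem~\ref{3} (deterministic GD), but the correct citation is Theorem~\ref{5} (the SGD analogue), since the identity being specialized is the one for the stochastic iteration. If desired, one could also note the structural parallel emphasized after Theorem~\ref{5}: in expectation the SGD component error obeys the same recursion as the deterministic GD component error, so Corollary~\ref{4}-type conclusions (e.g.\ faster decay for larger $\sigma_\ell$, and annihilation of the top component in a single step when $\alpha = 1/\sigma_{\max}^2$) transfer directly to $\mathbb{E}[\langle x_{n+1}-x_*, v_\ell\rangle]$.
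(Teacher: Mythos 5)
Your proposal is correct and matches the paper's intent exactly: the paper states this result without a separate proof, treating it as the immediate specialization of the product formula for $\mathbb{E}[\langle x_{n+1}-x_*, v_\ell\rangle]$ to the constant step size $\alpha_k=\alpha$, which is precisely your argument. Your bookkeeping remark is also apt --- the natural citation is Theorem~\ref{5} (the SGD identity) rather than Theorem~\ref{3} (the deterministic GD statement), even though the two formulas coincide in form.
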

\noindent
However, Theorem~\ref{12} does not necessarily mean that \(x_n \to x_*\) as \(n \to \infty\). Actually we know things will behave differently depending on whether the LS problem is consistent or not. Therefore, to understand the convergence, we need to analyze \(\mathbb{E}[\langle x_n-x_*, v_\ell\rangle^2]\). However, if we simply apply 
Theorem~\ref{10} to aim to get the bound, we find that the bound will not converge to 0 as \(n \to \infty\). We need to further split the case into consistent problem and inconsistent problem.
\subsection{Consistent Problem}
For consistent problems, \(F_* = \min_{x \in \mathbb{R}^N} F(x) = 0\). The following theorem gives an upper bound of \(\mathbb{E}[\langle x_n-x_*, v_\ell\rangle^2]\), which, unfortunately, does not depend on \(\ell\).
\begin{theorem} \label{13}
    If we use fixed step size \(\alpha\) for a consistent problem, then we have \[\mathbb{E}[\langle x_{n+1}-x_*, v_\ell\rangle^2] \leq (1-2\alpha\sigma_{\min}^2+\alpha^2M\tilde{L}c(A)\sigma_{\max}^2)^{n+1} \|x_0-x_*\|^2\]
\end{theorem}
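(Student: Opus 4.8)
The plan is to specialize Theorem~\ref{10} to the case of a fixed step size $\alpha_k=\alpha$ and a consistent problem, where $F_*=0$, and then simplify the resulting constants. When $F_*=0$ we have $C(\alpha)=2\alpha^2 M\tilde L F_* = 0$, and moreover in Lemma~\ref{9} the quantity $\sigma^2=\mathbb{E}[\|\nabla f_i(x_*)\|^2]$ vanishes as well (since for a consistent problem $\langle a_i,x_*\rangle=b_i$ for every $i$, so each $\nabla f_i(x_*)=0$), hence $q(\alpha)=2\alpha^2\sigma^2=0$. Therefore in the master recurrence from Lemma~\ref{11} both $C_k$ and the $q$-term drop out entirely, leaving only the $A$- and $B$-parts of the bound. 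What remains to show is that the surviving expression collapses to a single geometric factor times $\|x_0-x_*\|^2$.

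First I would write out $A_0=\prod_{i=0}^n A(\alpha)=(1-2\alpha\sigma_\ell^2)^{n+1}$ and, with $C_0=0$, combine the $\langle x_0-x_*,v_\ell\rangle^2$ term with the $B_0\|x_0-x_*\|^2$ term using the trivial bound $\langle x_0-x_*,v_\ell\rangle^2\le\|x_0-x_*\|^2$. Then it suffices to bound $A_0+B_0$. With $A(\alpha)=1-2\alpha\sigma_\ell^2$ and $B(\alpha)=\alpha^2 M\tilde L c(A)\sigma_{\max}^2$ both independent of the index, and $p(\alpha)=1-\alpha\sigma_{\min}^2$, each term $\tilde B_i$ in $B_0=\sum_{i=0}^n\tilde B_i$ becomes $A(\alpha)^{\,n-i}B(\alpha)p(\alpha)^{\,i}$, so
\[
A_0+B_0 \;=\; A(\alpha)^{n+1} \;+\; B(\alpha)\sum_{i=0}^{n} A(\alpha)^{\,n-i} p(\alpha)^{\,i}.
\]
The cleanest way to finish is to prove by induction on $n$ that $A_0+B_0\le (A(\alpha)+B(\alpha))^{n+1}$, i.e.\ that $(1-2\alpha\sigma_{\min}^2+\alpha^2 M\tilde L c(A)\sigma_{\max}^2)^{n+1}$ is the desired factor: in the induction step one uses $A(\alpha)\le A(\alpha)+B(\alpha)$ and $p(\alpha)\le A(\alpha)+B(\alpha)$ (the latter because $p(\alpha)-A(\alpha)=\alpha\sigma_{\min}^2\ge 0$, so $p(\alpha)\le A(\alpha)+B(\alpha)$ provided $B(\alpha)\ge\alpha\sigma_{\min}^2$, which holds once the step size is small enough, or one simply notes $p(\alpha)\le 1$ and folds the comparison into the geometric sum directly). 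Alternatively, one can avoid the inductive dressing entirely: bound each $p(\alpha)^i\le 1$ isn't tight enough, so instead observe that $A_0+B_0$ is exactly the $(n+1)$-fold product expansion of a product of terms each equal to $A(\alpha)$ or $p(\alpha)$ weighted appropriately, dominated termwise by $(A(\alpha)+B(\alpha))^{n+1}$.

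The main obstacle is the bookkeeping in that last comparison: making sure the constant appearing in the exponent is exactly $1-2\alpha\sigma_{\min}^2+\alpha^2 M\tilde L c(A)\sigma_{\max}^2$ and not, say, $1-2\alpha\sigma_\ell^2+\cdots$, and checking that the telescoping/domination of the mixed $A$–$p$ products is valid under only the standing hypothesis $\alpha\le\frac{1}{2M\tilde L}$ inherited from Lemma~\ref{9}. In particular I would verify that replacing $\sigma_\ell^2$ by $\sigma_{\min}^2$ in $A(\alpha)$ is legitimate (it only enlarges $A(\alpha)$, hence the bound), which is also why the final estimate — as the theorem statement already warns — is independent of $\ell$. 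Everything else is a routine geometric-series manipulation once $C_0$ and $q$ are seen to vanish.
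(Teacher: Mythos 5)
There is a genuine gap in the final comparison step, and it is not just bookkeeping. Your reduction correctly kills $C_0$ (for a consistent problem $F_*=0$ and $\nabla f_i(x_*)=0$, so $C(\alpha)=q(\alpha)=0$), but the remaining inequality $A_0+B_0\le\bigl(1-2\alpha\sigma_{\min}^2+\alpha^2M\tilde{L}c(A)\sigma_{\max}^2\bigr)^{n+1}$ is false in exactly the regime the theorem is meant for. The culprit is the factor $p(\alpha)=1-\alpha\sigma_{\min}^2$ inherited from Lemma~\ref{9}: you need $p(\alpha)\le A_{\min}(\alpha)+B(\alpha)$, i.e.\ $\alpha\sigma_{\min}^2\le\alpha^2M\tilde{L}c(A)\sigma_{\max}^2$, which holds only when $\alpha\ge\sigma_{\min}^2/(M\tilde{L}c(A)\sigma_{\max}^2)$ --- a \emph{lower} bound on the step size, not the ``small enough step size'' you invoke (you have the direction reversed, since $B(\alpha)$ is quadratic in $\alpha$). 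When $\alpha$ is below that threshold, $B_0$ contains the term $\tilde{B}_{n}=B(\alpha)\,p(\alpha)^{n}$ (all terms are nonnegative here because $\alpha\le\frac{1}{2M\tilde L}$ and $M\tilde L\ge\sigma_{\max}^2$ give $A(\alpha)\ge0$), which decays only like $(1-\alpha\sigma_{\min}^2)^n$; since then $1-\alpha\sigma_{\min}^2>1-2\alpha\sigma_{\min}^2+\alpha^2M\tilde{L}c(A)\sigma_{\max}^2$, for large $n$ this single term already exceeds the claimed right-hand side. So no rearrangement of the mixed $A$--$p$ geometric sums, termwise domination, or induction can rescue the route through Theorem~\ref{10}: the bound of Lemma~\ref{9} is simply too weak for this purpose, because it was derived for general (possibly inconsistent) problems and discards the favorable $\alpha^2$ term.

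The paper avoids this by not using Theorem~\ref{10} or Lemma~\ref{9} at all. It derives a one-step contraction for the full error norm directly: expanding $\mathbb{E}[\|x_{k+1}-x_*\|^2\mid x_k]$, using the unbiasedness of the stochastic gradient together with $\langle x_k-x_*,H(x_k-x_*)\rangle\ge\sigma_{\min}^2\|x_k-x_*\|^2$ for the cross term, and reusing the estimate from the proof of Lemma~\ref{7} with $F_*=0$, namely $\frac1M\sum_{i=1}^M\|\nabla f_i(x_k)\|^2\le M\tilde{L}c(A)\sigma_{\max}^2\|x_k-x_*\|^2$, to get $\mathbb{E}[\|x_{k+1}-x_*\|^2\mid x_k]\le\bigl(1-2\alpha\sigma_{\min}^2+\alpha^2M\tilde{L}c(A)\sigma_{\max}^2\bigr)\|x_k-x_*\|^2$; iterating and then applying $\langle x_{n+1}-x_*,v_\ell\rangle^2\le\|x_{n+1}-x_*\|^2$ gives the statement. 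If you want to salvage your write-up, replace the appeal to Lemma~\ref{9} by this direct norm recursion (it is exactly the ``$B$-type'' estimate but with the correct quadratic-in-$\alpha$ contraction factor) and the rest of your specialization goes through trivially.
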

\begin{proof}
    First we know
    \begin{align*}
        \mathbb{E}[\|x_{k+1}-x_k\|^2]
        &= \frac{1}{M} \sum_{i=1}^{M} \|x_k-x_*-\alpha\nabla f_i(x_k)\|^2 \\
        &= \|x_k-x_*\|^2 - 2\alpha\langle x_k-x_*, \nabla F(x_k)\rangle + \alpha^2 \sum_{i=1}^{M} \|\nabla f_i(x_k)\|^2 \\
        &= \|x_k-x_*\|^2 - 2\alpha\langle x_k-x_*, H(x_k-x_*)\rangle + \alpha^2 \sum_{i=1}^{M} \|\nabla f_i(x_k)\|^2 \\
        &\leq \|x_k-x_*\|^2 - 2\alpha\sigma_{\min}^2 \|x_k-x_*\|^2 + \alpha^2 \sum_{i=1}^{M} \|\nabla f_i(x_k)\|^2
    \end{align*}
    In the proof of Lemma~\ref{7}, we have also shown that 
    \[ \frac{1}{M} \sum_{i=1}^{M} \|\nabla f_i(x_k)\|^2 \leq M\tilde{L}c(A)\sigma_{\max}^2\|x_k-x_*\|^2\]
    Therefore, we have
    \[ \mathbb{E}[\|x_{k+1}-x_*\|^2 | x_k] \leq (1-2\alpha\sigma_{\min}^2+\alpha^2M\tilde{L}c(A)\sigma_{\max}^2) \|x_k-x_*\|^2 \]
    Applying the above relation recursively, we have 
    \[\mathbb{E}[\|x_{n+1}-x_*\|^2] \leq (1-2\alpha\sigma_{\min}^2+\alpha^2M\tilde{L}c(A)\sigma_{\max}^2)^{n+1} \|x_0-x_*\|^2\]
    Finally, the result follows from noticing that \(\langle x-x_*, v_\ell\rangle^2 \leq \|x-x_*\|^2 \|v_\ell\|^2 = \|x-x_*\|^2\) 
\end{proof}
\noindent
Therefore, when \(\alpha\) is sufficiently small such that \(0 < (1-2\alpha\sigma_{\min}^2+\alpha^2M\tilde{L}c(A)\sigma_{\max}^2) < 1\), or equivalently, \(0 < \alpha < \frac{2}{M\tilde{L}c(A)^2}\), we can expect \(\mathbb{E}[\langle x_n-x_*, v_\ell\rangle^2]\) to converge with a linear rate. However, since the upper bound in Theorem~\ref{13} does not relate to \(\ell\) at all, we cannot deduce that \(\langle x_k-x_*, v_1\rangle\) will necessarily converge faster than \(\langle x_k-x_*, v_N\rangle\). Actually, using SGD with fixed step size on LS problem can be seen as a special instance of Randomized Kaczmarz when \(\forall i\), \(\|a_i\| = 1\) (\(a_i\) denotes the \(i\)-th row of \(A\)). So as Figure~\ref{Figure1} suggests, even though in the first stage of convergence, smaller \(\ell\) gives faster convergence speed, asymptotically as $k\to\infty$, their difference actually becomes subtle and negligible.

\subsection{Inconsistent Problem}
In the inconsistent case, we should not expect \(\mathbb{E}[\langle x_n-x_*, v_\ell\rangle^2]\) to converge to 0 anyway. The same applies to the (randomized or not) Kaczmarz method---which is arguably an important weakness. This is overcome by the step sizes we study next.

\section{\(1/k\) Decaying Step Size}
In this section, we assume the step size \(\alpha_k\) takes the form \(\alpha_k = \frac{a}{b+k}\) for \(a, b > 0\). First, as usual, we give an upper bound of \(\mathbb{E}[\langle x_k-x_*, v_\ell\rangle]\).
\begin{theorem} \label{14}
    If we use SGD with step size \(\alpha_k = \frac{a}{b+k}\), then we have\[|\mathbb{E}[\langle x_{n+1} - x_*, v_\ell \rangle]| \leq \left( \frac{b}{b+n} \right)^{a \sigma_\ell^2} |\langle x_0 - x_*, v_\ell \rangle|\]
\end{theorem}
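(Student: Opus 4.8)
The plan is to start from the exact expectation formula in Theorem~\ref{5}, which gives
\[
\mathbb{E}[\langle x_{n+1}-x_*, v_\ell\rangle \mid x_0] = \left(\prod_{k=0}^{n}\bigl(1-\alpha_k\sigma_\ell^2\bigr)\right)\langle x_0-x_*, v_\ell\rangle,
\]
so that the whole task reduces to bounding the scalar product $\prod_{k=0}^{n}(1-\alpha_k\sigma_\ell^2)$ in absolute value when $\alpha_k = a/(b+k)$. Taking absolute values and then logarithms, I would estimate $\sum_{k=0}^{n}\log\bigl|1-a\sigma_\ell^2/(b+k)\bigr|$ from above. For the terms where $a\sigma_\ell^2/(b+k) < 1$ the summand is negative and we can use the elementary inequality $\log(1-t)\le -t$ valid for $t<1$; this turns the sum into $-a\sigma_\ell^2\sum_{k=0}^{n}\frac{1}{b+k}$.

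Next I would compare the harmonic-type sum to an integral: $\sum_{k=0}^{n}\frac{1}{b+k} \ge \int_{0}^{n+1}\frac{dt}{b+t} = \log\frac{b+n+1}{b} \ge \log\frac{b+n}{b}$, or more conveniently $\sum_{k=0}^{n}\frac{1}{b+k}\ge \int_0^n \frac{dt}{b+t} = \log\frac{b+n}{b}$. Exponentiating then yields
\[
\prod_{k=0}^{n}\Bigl|1-\frac{a\sigma_\ell^2}{b+k}\Bigr| \le \exp\!\left(-a\sigma_\ell^2\log\frac{b+n}{b}\right) = \left(\frac{b}{b+n}\right)^{a\sigma_\ell^2},
\]
which is exactly the claimed bound after multiplying by $|\langle x_0-x_*,v_\ell\rangle|$ and using the tower property to remove the conditioning on $x_0$ (or simply stating the bound conditionally, matching the paper's style).

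The main obstacle is the handful of early indices $k$ for which $b+k \le a\sigma_\ell^2$, i.e. where $1-\alpha_k\sigma_\ell^2$ is nonpositive: there $\log|1-\alpha_k\sigma_\ell^2|$ is not controlled by $-\alpha_k\sigma_\ell^2$ and could even be positive (if $|1-\alpha_k\sigma_\ell^2|>1$). I would handle this by splitting the product at the threshold index $k_0 = \lceil a\sigma_\ell^2 - b\rceil$: for $k\ge k_0$ the clean inequality above applies, and for $k<k_0$ I would argue that these finitely many factors do not spoil the bound. One way is to observe that for the step sizes to be sensible (and consistent with the $\alpha_k\le \frac{1}{2M\tilde L}$ hypothesis used elsewhere, which forces $a/b\le \frac{1}{2M\tilde L}$ and hence $a\sigma_\ell^2/b \le \sigma_\ell^2/(2M\tilde L)<1$), one typically has $a\sigma_\ell^2 < b$, so $k_0\le 0$ and there are no bad indices at all; I would state this as the operative assumption. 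Absent that, a cruder route is to bound each early factor by a constant and absorb it, but the cleanest exposition is simply to note $a\sigma_\ell^2\le a\sigma_{\max}^2 \le b$ under the standing step-size restriction, so that every factor lies in $[0,1)$ and the logarithmic estimate goes through verbatim. I would also remark that if some factor is exactly zero the bound holds trivially.
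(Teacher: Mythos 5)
Your proposal is correct and follows essentially the same route as the paper's proof: start from the exact product formula of Theorem~\ref{5}, bound $\log\prod_{k=0}^{n}(1-a\sigma_\ell^2/(b+k))$ via $\log(1-t)\le -t$, and compare the resulting sum $\sum_k 1/(b+k)$ with $\int \frac{dx}{x}$ to obtain $\left(\frac{b}{b+n}\right)^{a\sigma_\ell^2}$. Your extra care about the possibility of nonpositive factors (ruled out since $\sigma_{\max}^2\le M\tilde L$ and $a/b\le \frac{1}{2M\tilde L}$ give $a\sigma_\ell^2<b$) is a point the paper's proof silently glosses over, but it does not change the argument.
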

\begin{proof}
    First, from Theorem~\ref{3}, we have 
    \[\mathbb{E}[\langle x_{n+1} - x_*, v_\ell \rangle] = \left(\prod_{k=0}^{n} \left( 1 - \frac{a \sigma_\ell^2}{b+k} \right)\right)\langle x_{0} - x_*, v_\ell \rangle\]
    We know \(\forall m \leq n \in \mathbb{N}, c>0, \prod_{k=m}^{n} \left(1-\frac{c}{k}\right) \leq \left(\frac{m}{n}\right)^c\) because 
    $$
    \log \left( \prod_{k=m}^{n} \left( 1 - \frac{c}{k} \right) \right) = \sum_{k=m}^{n} \log \left( 1 - \frac{c}{k} \right) \leq -\sum_{k=m}^{n} \frac{c}{k} \leq -c\int_{m}^{n} x^{-1} \mathrm{d}x = -c \log \left( \frac{n}{m} \right)
    $$
    Therefore, we know \[ \left(\prod_{k=0}^{n} \left( 1 - \frac{a \sigma_\ell^2}{b+k} \right)\right) \leq \left( \frac{b}{b+n} \right)^{a \sigma_\ell^2}\]
\end{proof}
\noindent
Now we start to focus on \(\mathbb{E}[\langle x_k-x_*, v_\ell\rangle^2]\). From Theorem~\ref{10}, we know that \[\mathbb{E}[\langle x_{n+1}-x_*, v_\ell\rangle^2 | x_0] \leq A_0\langle x_0-x_*, v_\ell\rangle^2 + B_0 \|x_0-x_*\|^2 + C_0\]
In the following parts, we aim to bound \(A_0, B_0, C_0\) respectively.
\begin{lemma} \label{14a}
    \(A_k \leq \left(\frac{b+k}{b+n}\right)^{2a\sigma_\ell^2}\) 
\end{lemma}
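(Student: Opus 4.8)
The plan is to simply unfold the definition of $A_k$ from Lemma~\ref{11} and then reuse the logarithm-versus-integral estimate already employed in the proof of Theorem~\ref{14}. Recall that $A_k = \prod_{i=k}^{n} A(\alpha_i)$ with $A(\alpha_i) = 1 - 2\alpha_i\sigma_\ell^2$, so with the choice $\alpha_i = \frac{a}{b+i}$ we have
\[
A_k = \prod_{i=k}^{n}\left(1 - \frac{2a\sigma_\ell^2}{b+i}\right).
\]

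First I would check that every factor is nonnegative. Since $\sigma_\ell^2 \le \sigma_{\max}^2 \le \|A\|_F^2 = \sum_{i}\|a_i\|^2 \le M\tilde{L}$, the standing assumption $\alpha_i \le \frac{1}{2M\tilde{L}}$ from Lemma~\ref{9} gives $2\alpha_i\sigma_\ell^2 \le 1$, hence $0 \le A(\alpha_i) \le 1$ for every $i$. If some factor vanishes then $A_k = 0$ and the bound is trivial, so we may assume all factors are strictly positive. Then I would take logarithms and apply $\log(1-t) \le -t$ for $t \in [0,1)$, obtaining
\[
\log A_k = \sum_{i=k}^{n}\log\!\left(1 - \frac{2a\sigma_\ell^2}{b+i}\right) \le -2a\sigma_\ell^2\sum_{i=k}^{n}\frac{1}{b+i}.
\]
Because $x \mapsto \frac{1}{b+x}$ is decreasing, $\frac{1}{b+i} \ge \int_{i}^{i+1}\frac{\mathrm{d}x}{b+x}$, so $\sum_{i=k}^{n}\frac{1}{b+i} \ge \int_{k}^{n+1}\frac{\mathrm{d}x}{b+x} = \log\frac{b+n+1}{b+k} \ge \log\frac{b+n}{b+k}$. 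Combining these,
\[
\log A_k \le -2a\sigma_\ell^2\log\frac{b+n}{b+k} = 2a\sigma_\ell^2\log\frac{b+k}{b+n},
\]
and exponentiating gives $A_k \le \left(\frac{b+k}{b+n}\right)^{2a\sigma_\ell^2}$, as claimed.

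The computation is entirely routine and essentially identical to the estimate in the proof of Theorem~\ref{14}; the only points needing a little care are (i) justifying nonnegativity of the factors so that passing to $\log$ is legitimate, together with disposing of the degenerate zero case, and (ii) orienting the sum-to-integral comparison in the direction that makes the bound point the right way. Neither of these is a genuine obstacle.
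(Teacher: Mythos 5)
Your proof is correct and takes essentially the same route as the paper, which simply invokes the estimate from the proof of Theorem~\ref{14} with \(c = 2a\sigma_\ell^2\): take logarithms, use \(\log(1-t)\le -t\), and compare the resulting harmonic sum with \(\int \frac{\mathrm{d}x}{b+x}\). Your additional check that the factors are nonnegative (via \(\alpha_i \le \frac{1}{2M\tilde L}\) and \(\sigma_\ell^2 \le M\tilde L\)) is a detail the paper leaves implicit, but it does not change the argument.
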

\begin{proof}
    Recall that 
    $$
    A_k = \prod_{i=k}^{n} A(\alpha_i) = \prod_{i=k}^{n} \left(1-\frac{2a\sigma_\ell^2}{b+k}\right) \leq \left(\frac{b+k}{b+n}\right)^{2a\sigma_\ell^2}
    $$
    Where the last inequality follows similarly from the proof of Theorem~\ref{14}, just letting \(c  = 2a\sigma_\ell^2\).
\end{proof}
\begin{lemma} \label{15}
    \(B_k \leq \frac{a^2M\tilde{L}c(A)\sigma_{\max}^2}{b+k-1} \left(\frac{b+k+1}{b+n+1}\right)^{a\sigma_{\min}^2}\)
\end{lemma}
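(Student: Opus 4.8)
The plan is to expand $B_k$ via the closed form in Lemma~\ref{11}: $B_k=\sum_{i=0}^{n-k}\tilde B_i$, where each $\tilde B_i$ is a product of $A(\alpha_\cdot)$-factors on a tail block, a single factor $B(\alpha_{i+k})$, and $p(\alpha_\cdot)$-factors on a head block. Reindexing by $j=i+k$ this reads
\[
B_k=\sum_{j=k}^{n}\Bigl(\prod_{m=j+1}^{n}A(\alpha_m)\Bigr)B(\alpha_j)\Bigl(\prod_{m=k}^{j-1}p(\alpha_m)\Bigr).
\]
First I would record two facts about the factors, valid under the standing hypothesis $\alpha_m\le\frac1{2M\tilde L}$ of Lemma~\ref{9}: since $\sigma_{\max}^2\le\|A\|_F^2\le M\tilde L$ one gets $2\alpha_m\sigma_\ell^2\le1$ and $\alpha_m\sigma_{\min}^2\le\tfrac12$, so every $A(\alpha_m)$ and $p(\alpha_m)$ lies in $[0,1]$; and since $\sigma_\ell^2\ge\sigma_{\min}^2$ we have $A(\alpha_m)=1-2\alpha_m\sigma_\ell^2\le1-\alpha_m\sigma_{\min}^2=p(\alpha_m)$. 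Replacing each tail factor $A(\alpha_m)$ by $p(\alpha_m)$ and regrouping, the head and tail products merge: their index sets $\{k,\dots,j-1\}$ and $\{j+1,\dots,n\}$ reassemble into $\{k,\dots,n\}\setminus\{j\}$, so
\[
B_k\le\Bigl(\prod_{m=k}^{n}p(\alpha_m)\Bigr)\sum_{j=k}^{n}\frac{B(\alpha_j)}{p(\alpha_j)}.
\]

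Next I would bound the two pieces. For the product, $\prod_{m=k}^{n}p(\alpha_m)=\prod_{m=k}^{n}\bigl(1-\tfrac{a\sigma_{\min}^2}{b+m}\bigr)$, and the elementary inequality $\prod_{k=m}^{n}(1-c/k)\le(m/n)^c$ used in the proof of Theorem~\ref{14} (now with $c=a\sigma_{\min}^2$, shifting indices by $b$) gives $\prod_{m=k}^{n}p(\alpha_m)\le\bigl(\tfrac{b+k}{b+n+1}\bigr)^{a\sigma_{\min}^2}\le\bigl(\tfrac{b+k+1}{b+n+1}\bigr)^{a\sigma_{\min}^2}$, the last step because the base grew while the exponent is nonnegative. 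For the sum, $\frac{B(\alpha_j)}{p(\alpha_j)}=\frac{a^2M\tilde L c(A)\sigma_{\max}^2}{(b+j)(b+j-a\sigma_{\min}^2)}$, so it remains to prove $\sum_{j=k}^{n}\frac1{(b+j)(b+j-a\sigma_{\min}^2)}\le\frac1{b+k-1}$; using $a\sigma_{\min}^2\le1$ this is dominated termwise by $\frac1{(b+j)(b+j-1)}=\frac1{b+j-1}-\frac1{b+j}$, which telescopes to $\frac1{b+k-1}-\frac1{b+n}\le\frac1{b+k-1}$. Multiplying the two bounds reproduces exactly $B_k\le\frac{a^2M\tilde L c(A)\sigma_{\max}^2}{b+k-1}\bigl(\frac{b+k+1}{b+n+1}\bigr)^{a\sigma_{\min}^2}$.

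I expect the main obstacle to be the termwise/telescoping estimate for $\sum_{j=k}^{n}\frac1{(b+j)(b+j-a\sigma_{\min}^2)}$: the clean bound $\le\frac1{b+k-1}$ relies on $a\sigma_{\min}^2\le1$ so that $b+j-a\sigma_{\min}^2\ge b+j-1$ (and on $b+k-1>0$, which costs nothing for $k\ge1$ and needs $b>1$ for $k=0$). This either follows from the step-size hypothesis in the regime of interest or should be stated as a mild additional assumption; dropping it would introduce a factor $\tfrac1{a\sigma_{\min}^2}$ and change the stated constant. The bookkeeping in the first step — checking all factors lie in $[0,1]$ and that the merged index set is exactly $\{k,\dots,n\}\setminus\{j\}$ — is routine but is where sign or off-by-one errors would enter, so I would write it out explicitly.
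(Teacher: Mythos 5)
Your strategy is essentially the paper's: replace the $A(\alpha_\cdot)$-factors in each $\tilde B_i$ by $p(\alpha_\cdot)$-factors, pull out a common product of $p$'s bounded by $\bigl(\tfrac{b+k+1}{b+n+1}\bigr)^{a\sigma_{\min}^2}$ via the $\log(1-x)\le -x$ / integral comparison, and bound the remaining sum by a telescoping estimate of order $\tfrac1{b+k-1}$. The one place you depart is in how you merge the head and tail products: you divide out the missing factor, writing $B_k\le\bigl(\prod_{m=k}^{n}p(\alpha_m)\bigr)\sum_j B(\alpha_j)/p(\alpha_j)$, and this is exactly what forces your extra condition. The resulting denominator $(b+j)(b+j-a\sigma_{\min}^2)$ only telescopes to $\tfrac1{b+k-1}$ if $a\sigma_{\min}^2\le1$, and that does \emph{not} follow from the step-size hypothesis $\alpha_0=a/b\le\tfrac1{2M\tilde L}$: combined with $\sigma_{\min}^2\le M\tilde L$ it only yields $a\sigma_{\min}^2\le b/2$, which is vacuous for $b>2$. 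So as a proof of the lemma as stated (which carries no such hypothesis), there is a genuine, if small, gap.

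The fix is the paper's route and costs nothing: since $\alpha_j$ is decreasing, $j\mapsto p(\alpha_j)$ is increasing, so the product over the omitted-index set $\{k,\dots,n\}\setminus\{j\}$ is dominated termwise by the product over $\{k+1,\dots,n\}$, giving $B_k\le\bigl(\sum_{j=k}^{n}B(\alpha_j)\bigr)\bigl(\prod_{m=k+1}^{n}p(\alpha_m)\bigr)$ with no division at all. Then $\sum_{j=k}^{n}\tfrac1{(b+j)^2}\le\int_{b+k-1}^{b+n}x^{-2}\,\mathrm{d}x\le\tfrac1{b+k-1}$ unconditionally, and $\prod_{m=k+1}^{n}p(\alpha_m)\le\bigl(\tfrac{b+k+1}{b+n+1}\bigr)^{a\sigma_{\min}^2}$ by the same integral comparison you already use, reproducing the stated constant without any restriction on $a\sigma_{\min}^2$. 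Your preliminary bookkeeping (that $A(\alpha_m),p(\alpha_m)\in[0,1]$ under $\alpha_m\le\tfrac1{2M\tilde L}$, and $A(\alpha_m)\le p(\alpha_m)$) is correct and worth keeping, as the paper uses these facts implicitly.
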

\begin{proof}
    First, since \(\forall j, A(\alpha_j) \leq p(\alpha_j)\), and \(p(\alpha_j)\) increases with respect to \(j\), we have
    \begin{align*}
        \tilde{B}_i
        &\leq p(\alpha_n)\cdots p(\alpha_{i+k+1}) B(\alpha_{i+k}) p(\alpha_{i+k-1})\cdots p(\alpha_k) \\
        &\leq B(\alpha_{i+k}) p(\alpha_n)\cdots p(\alpha_{k+1})
    \end{align*}
    As \(B_k = \sum_{i=0}^{n-k} \tilde{B}_i\), we then have
    \begin{align*}
    B_k 
    &\leq \left(\sum_{i=k}^{n} B(\alpha_i)\right)\left(\prod_{i=k+1}^{n} p(\alpha_k)\right) \\
    &= a^2 M \tilde{L} c(A) \sigma_{\max}^2 \left(\sum_{i=k}^{n} \frac{1}{(b+i)^2}\right)\left(\prod_{i=k+1}^{n} 1-\frac{a\sigma_{\min}^2}{b+i}\right) \\
    &\leq a^2M\tilde{L}c(A)\sigma_{\max}^2 \left(\frac{1}{b+k-1}-\frac{1}{b+n}\right) \left(\frac{b+k+1}{b+n}\right)^{a\sigma_{\min}^2} \\
    &\leq \frac{a^2M\tilde{L}c(A)\sigma_{\max}^2}{b+k-1} \left(\frac{b+k+1}{b+n+1}\right)^{a\sigma_{\min}^2}
\end{align*}
\end{proof}
\noindent
For \(C_0\), we know \(C_0 \leq \left(\sum_{i=1}^{n} q(\alpha_{i-1})B_i\right) + \left(\sum_{i=1}^{n} A_iC(\alpha_{i-1})\right) + C(\alpha_n)\) from Theorem~\ref{10}. To bound \(C_0\), we need the following lemmas.

\begin{lemma} \label{16}
    If \(a\sigma_{\min}^2 <2\), then \(\sum_{k=1}^{n} q(\alpha_{i-1})B_i\) converges to 0 with the rate \(\left(\frac{1}{b+n}\right)^{a\sigma_{\min}^2}\).
    If \(a\sigma_{\min}^2 >2\), then \(\sum_{k=1}^{n} q(\alpha_{i-1})B_i\) converges to 0 with the rate \(\left(\frac{1}{b+n}\right)^2\). 
\end{lemma}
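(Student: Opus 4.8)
The plan is to substitute the explicit step size and the bound on $B_i$ from Lemma~\ref{15}, collapse everything into a single $p$-series-type sum in the index $i$, and then split into the two cases according to the sign of the resulting exponent.

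First I would write $q(\alpha_{i-1}) = 2\sigma^2\alpha_{i-1}^2 = \frac{2\sigma^2 a^2}{(b+i-1)^2}$ and invoke Lemma~\ref{15} with $k=i$, namely $B_i \leq \frac{a^2 M\tilde{L}c(A)\sigma_{\max}^2}{b+i-1}\left(\frac{b+i+1}{b+n+1}\right)^{a\sigma_{\min}^2}$. Multiplying the two and abbreviating $D = 2a^4\sigma^2 M\tilde{L}c(A)\sigma_{\max}^2$ and $s = a\sigma_{\min}^2$ gives
\[
q(\alpha_{i-1})B_i \;\leq\; \frac{D}{(b+i-1)^3}\left(\frac{b+i+1}{b+n+1}\right)^{s}.
\]
Since $\frac{b+i+1}{b+i-1}\leq 3$ for $i\geq 2$, we have $(b+i+1)^s\leq 3^s(b+i-1)^s$ there, while the single term $i=1$ is separately $O\big((b+n+1)^{-s}\big)$; pulling out the $(b+n+1)^{-s}$ factor yields
\[
\sum_{i=1}^{n} q(\alpha_{i-1})B_i \;\leq\; \frac{D'}{(b+n+1)^{s}}\sum_{i=1}^{n}(b+i-1)^{s-3}
\]
for a constant $D'$ independent of $n$.

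Next I would estimate $\sum_{i=1}^{n}(b+i-1)^{s-3}$ by comparison with $\int_{0}^{b+n} x^{s-3}\,dx$. If $a\sigma_{\min}^2 < 2$ then $s-3 < -1$, the series converges to a finite constant, and hence $\sum_{i=1}^{n}q(\alpha_{i-1})B_i = O\big((b+n+1)^{-s}\big) = O\big((b+n)^{-a\sigma_{\min}^2}\big)$. If $a\sigma_{\min}^2 > 2$ then $s-3 > -1$, so $\sum_{i=1}^{n}(b+i-1)^{s-3} = O\big((b+n)^{s-2}\big)$, which gives $\sum_{i=1}^{n}q(\alpha_{i-1})B_i = O\big((b+n)^{s-2}(b+n+1)^{-s}\big) = O\big((b+n)^{-2}\big)$. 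Both are exactly the claimed rates.

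I do not expect a genuine obstacle here; the work is bookkeeping. The minor points to watch are: the discrepancy between the base $b+i+1$ in the exponent $s$ and the base $b+i-1$ in the power $3$, which is absorbed into a crude constant; the small values of $b+i-1$ for $i$ near $1$ (finitely many bounded terms, harmless after multiplication by $(b+n+1)^{-s}$); and in the case $a\sigma_{\min}^2>2$ the integral comparison, which only needs to be tracked up to the leading power of $b+n$. The borderline case $a\sigma_{\min}^2 = 2$, where a $\log(b+n)$ factor would appear, is deliberately excluded from the statement.
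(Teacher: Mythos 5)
Your proposal is correct and follows essentially the same route as the paper: plug in $q(\alpha_{i-1})=2a^2\sigma^2/(b+i-1)^2$ and the Lemma~\ref{15} bound on $B_i$, pull out $(b+n+1)^{-a\sigma_{\min}^2}$, reduce to the power sum $\sum_i (b+i)^{a\sigma_{\min}^2-3}$ via a bounded ratio constant, and split into the cases $a\sigma_{\min}^2<2$ versus $>2$ by integral comparison. The only difference is cosmetic (you absorb the $b+i+1$ versus $b+i-1$ mismatch into a constant $3^{a\sigma_{\min}^2}$, the paper into $\left(\frac{b+1}{b-1}\right)^3$), so no further changes are needed.
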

\begin{proof}
    We have
    \[ \sum_{k=1}^{n} q(\alpha_{i-1})B_i \leq 2a^4M\tilde{L}c(A)\sigma_{\max}^2\sigma^2 \sum_{k=1}^{n} \frac{1}{(b+k-1)^3} \left(\frac{b+k+1} {b+n+1}\right)^{a\sigma_{\min}^2} \]
    \begin{align*}
        \sum_{k=1}^{n} \frac{1}{(b+k-1)^3} \left(\frac{b+k+1} {b+n}\right)^{a\sigma_{\min}^2}
        &= \sum_{k=1}^{n} \left(\frac{b+k+1}{b+k-1}\right)^{3} \frac{1}{(b+k+1)^3} \left(\frac{b+k+1} {b+n+1}\right)^{a\sigma_{\min}^2} \\
        &\leq \left(\frac{b+1}{b-1}\right)^{3} \left(\sum_{k=1}^{n} (b+k+1)^{a\sigma_{\min}^2-3}\right) \frac{1}{(b+n+1)^{a\sigma_{\min}^2}}
    \end{align*}
    If \(a\sigma_{\min}^2 < 2\), then we have
    \begin{align*}
        \sum_{k=1}^{n} (b+k+1)^{a\sigma_{\min}^2-3} 
        &\leq \int_{b+1}^{b+n+1} x^{a\sigma_{\min}^2-3} \mathrm{d}x \\
        &\leq \frac{1}{2-a\sigma_{\min}^2} (b+1)^{a\sigma_{\min}^2-2}
    \end{align*}
    Therefore, we have
    \[ \sum_{k=1}^{n} q(\alpha_{i-1})B_i \leq 2a^4M\tilde{L}c(A)\sigma_{\max}^2\sigma^2 \left(\frac{b+1}{b-1}\right)^{3} \left(\frac{1}{2-a\sigma_{\min}^2}\right) \frac{(b+1)^{a\sigma_{\min}^2-2}}{(b+n+1)^{a\sigma_{\min}^2}} \]
    If \(a\sigma_{\min}^2 > 2\), then we have
    \begin{align*}
        \sum_{k=1}^{n} (b+k+1)^{a\sigma_{\min}^2-3}
        &\leq \int_{b+1}^{b+n+1} x^{a\sigma_{\min}^2-3} \mathrm{d}x \\
        &\leq \frac{1}{a\sigma_{\min}^2-2} (b+n+1)^{a\sigma_{\min}^2-2}
    \end{align*}
    Therefore, we have
    \[ \sum_{k=1}^{n} q(\alpha_{i-1})B_i \leq 2a^4M\tilde{L}c(A)\sigma_{\max}^2\sigma^2 \left(\frac{b+1}{b-1}\right)^{3} \left(\frac{1}{a\sigma_{\min}^2-2}\right) \left(\frac{1}{b+n+1}\right)^2\]
\end{proof}

\begin{lemma} \label{17}
    If \(a\sigma_\ell^2 < \frac{1}{2}\), then \(\sum_{i=1}^{n} A_iC(\alpha_{i-1})\) converges to 0 with the rate \(\left(\frac{1}{b+n}\right)^{2a\sigma_\ell^2}\). If \(a\sigma_\ell^2 > \frac{1}{2}\), \(\sum_{i=1}^{n} A_iC(\alpha_{i-1})\) converges to 0 with the rate \(\left(\frac{1}{b+n}\right)\).
\end{lemma}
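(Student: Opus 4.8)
The plan is to mimic the argument of Lemma~\ref{16}: bound the summand explicitly, then compare the resulting sum with an integral whose behavior changes at the threshold \(a\sigma_\ell^2 = \tfrac12\). First I would substitute the definitions. With \(\alpha_{i-1} = \frac{a}{b+i-1}\) we have \(C(\alpha_{i-1}) = \frac{2a^2 M\tilde{L}F_*}{(b+i-1)^2}\), and Lemma~\ref{14a} gives \(A_i \leq \left(\frac{b+i}{b+n}\right)^{2a\sigma_\ell^2}\), so
\[
\sum_{i=1}^{n} A_i C(\alpha_{i-1}) \leq \frac{2a^2 M\tilde{L}F_*}{(b+n)^{2a\sigma_\ell^2}} \sum_{i=1}^{n} \frac{(b+i)^{2a\sigma_\ell^2}}{(b+i-1)^2}.
\]
Next I would remove the index shift from the summand by writing \(\frac{(b+i)^{2a\sigma_\ell^2}}{(b+i-1)^2} = \left(\frac{b+i}{b+i-1}\right)^2 (b+i)^{2a\sigma_\ell^2-2}\) and noting that \(\frac{b+i}{b+i-1}\) decreases in \(i\), hence is at most \(\frac{b+1}{b}\). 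This reduces the problem to controlling \(\sum_{i=1}^{n} (b+i)^{2a\sigma_\ell^2-2}\).

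Then I would split into the two cases. If \(a\sigma_\ell^2 < \tfrac12\) the exponent \(2a\sigma_\ell^2-2\) is strictly less than \(-1\), so \(\sum_{i=1}^{n} (b+i)^{2a\sigma_\ell^2-2} \le \int_b^{\infty} x^{2a\sigma_\ell^2-2}\,\mathrm{d}x = \frac{b^{2a\sigma_\ell^2-1}}{1-2a\sigma_\ell^2}\), a constant independent of \(n\); plugging back in shows the whole expression is \(O\!\left((b+n)^{-2a\sigma_\ell^2}\right)\), which is the claimed rate. If \(a\sigma_\ell^2 > \tfrac12\) the exponent exceeds \(-1\), so \(\sum_{i=1}^{n} (b+i)^{2a\sigma_\ell^2-2} \le \int_b^{b+n} x^{2a\sigma_\ell^2-2}\,\mathrm{d}x \le \frac{(b+n)^{2a\sigma_\ell^2-1}}{2a\sigma_\ell^2-1}\); dividing by \((b+n)^{2a\sigma_\ell^2}\) leaves a bound of order \(\frac{1}{(2a\sigma_\ell^2-1)(b+n)}\), i.e.\ rate \(\frac{1}{b+n}\).

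I do not expect a genuine obstacle here: the lemma is a routine integral-comparison estimate, structurally identical to Lemma~\ref{16}. The only points requiring care are the bookkeeping with the shifted index \(\alpha_{i-1}\) (so that the exponent governing convergence is \(2a\sigma_\ell^2-2\), giving the threshold \(a\sigma_\ell^2 = \tfrac12\) rather than \(1\)), and keeping the multiplicative constants — the \(\left(\frac{b+1}{b}\right)^2\) factor and the \(\frac{1}{|1-2a\sigma_\ell^2|}\) factor — uniform in \(n\); the borderline value \(a\sigma_\ell^2 = \tfrac12\), where a logarithmic factor would appear, is excluded by hypothesis.
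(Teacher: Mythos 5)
Your proposal is correct and follows essentially the same route as the paper: insert \(A_i \leq \bigl(\tfrac{b+i}{b+n}\bigr)^{2a\sigma_\ell^2}\) and \(C(\alpha_{i-1}) = \tfrac{2a^2M\tilde{L}F_*}{(b+i-1)^2}\), pull out \((b+n)^{-2a\sigma_\ell^2}\), absorb the index shift into a bounded factor, and compare \(\sum_k (b+k)^{2a\sigma_\ell^2-2}\) with an integral, splitting at the threshold \(a\sigma_\ell^2=\tfrac12\). The only (immaterial) difference is your slightly tighter constant \(\bigl(\tfrac{b+1}{b}\bigr)^2\) in place of the paper's \(\bigl(\tfrac{b}{b-1}\bigr)^2\).
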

\begin{proof}
    We have
    \begin{align*}
        \sum_{i=1}^{n} A_iC(\alpha_{i-1}) 
        &\leq 2a^2M\tilde{L}F_* \sum_{k=1}^{n} \frac{1}{(b+k-1)^2} \left(\frac{b+k}{b+n}\right)^{2a\sigma_\ell^2} \\
        &\leq \frac{2a^2M\tilde{L}F_*}{(b+n)^{2a\sigma_\ell^2}}\left(\frac{b}{b-1}\right)^2 \sum_{k=1}^{n} (b+k)^{2a\sigma_\ell^2-2}
    \end{align*}
    If \(a\sigma_\ell^2 < \frac{1}{2}\), then we have
    $$
        \sum_{k=1}^{n} (b+k)^{2a\sigma_\ell^2-2} \leq \int_{b}^{b+n} x^{2a\sigma_\ell^2-2} \mathrm{d}x \leq \frac{b^{2a\sigma_\ell^2-1}}{1-2a\sigma_\ell^2}
    $$
    Therefore, we have
    \[ \sum_{i=1}^{n} A_iC(\alpha_{i-1}) \leq 2a^2M\tilde{L}F_*\left(\frac{b}{b-1}\right)^2 \left( \frac{b^{2a\sigma_\ell^2-1}}{1-2a\sigma_\ell^2} \right) \left(\frac{1}{b+n}\right)^{2a\sigma_\ell^2}\]
    If \(a\sigma_\ell^2 > \frac{1}{2}\), then we have
    $$
        \sum_{k=1}^{n} (b+k)^{2a\sigma_\ell^2-2} \leq \int_{b}^{b+n} x^{2a\sigma_\ell^2-2} \mathrm{d}x \leq \left(\frac{1}{2a\sigma_\ell^2-1}\right) (b+n)^{2a\sigma_\ell^2-1}
    $$
    Therefore, we have
    \[ \sum_{i=1}^{n} A_iC(\alpha_{i-1}) \leq \left(\frac{2a^2M\tilde{L}F_*}{2a\sigma_\ell^2-1}\right) \left(\frac{b}{b-1}\right)^2 \left(\frac{1}{b+n}\right)\]
\end{proof}
\noindent
Recall that in Theorem~\ref{10}, we know that \(\mathbb{E}[\langle x_{n+1}-x_*, v_\ell\rangle^2 | x_0] \leq A_0\langle x_0-x_*, v_\ell\rangle^2 + B_0 \|x_0-x_*\|^2 + C_0\). Having already derived the upper bounds of \(A_0, B_0, C_0\), now we can summarize the convergence speed of \(\mathbb{E}[\langle x_k-x_*, v_\ell\rangle^2]\).

\begin{theorem} \label{18}
    \(A_0\) converges to 0 with the rate \(\left(\frac{b}{b+n}\right)^{2a\sigma_\ell^2}\), \(B_0\) converges to 0 with the rate \(\left(\frac{b}{b+n}\right)^{a\sigma_{\min}^2}\), and \(C_0\) converges to 0 with the rate \(\left(\frac{b}{b+n}\right)^{\eta}\) for some \(\eta \in (0, 2].\) (The convergence speed for \(C_0\) is not that easy to summarize, and here \(\eta\) is partially dependent on \(\ell\))
\end{theorem}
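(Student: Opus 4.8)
The plan is to read off Theorem~\ref{18} directly from Theorem~\ref{10} together with the term-by-term estimates already established in Lemmas~\ref{14a}, \ref{15}, \ref{16} and~\ref{17}: no new analytic work is required, only the assembly of these bounds and the extraction of a single exponent $\eta$.

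First I would dispose of $A_0$ and $B_0$, which are immediate. Setting $k=0$ in Lemma~\ref{14a} gives $A_0 = \prod_{i=0}^{n} A(\alpha_i) \le \left(\frac{b}{b+n}\right)^{2a\sigma_\ell^2}$, which is exactly the asserted rate. Setting $k=0$ in Lemma~\ref{15} gives $B_0 \le \frac{a^2 M\tilde{L}c(A)\sigma_{\max}^2}{b-1}\left(\frac{b+1}{b+n+1}\right)^{a\sigma_{\min}^2}$; since $b$ is a fixed parameter, the prefactor and the factor $(b+1)^{a\sigma_{\min}^2}$ do not depend on $n$, so $B_0 = O\!\left((b+n)^{-a\sigma_{\min}^2}\right)$, i.e.\ $B_0$ decays at the rate $\left(\frac{b}{b+n}\right)^{a\sigma_{\min}^2}$.

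Next I would treat $C_0$, the only composite quantity. By Theorem~\ref{10}, $C_0 = \sum_{i=1}^{n} q(\alpha_{i-1})B_i + \sum_{i=1}^{n} A_i C(\alpha_{i-1}) + C(\alpha_n)$, and I would bound the three summands separately. Lemma~\ref{16} shows the first decays like $(b+n)^{-\min(a\sigma_{\min}^2,\,2)}$; Lemma~\ref{17} shows the second decays like $(b+n)^{-\min(2a\sigma_\ell^2,\,1)}$; and $C(\alpha_n) = 2\alpha_n^2 M\tilde{L}F_* = \frac{2a^2 M\tilde{L}F_*}{(b+n)^2}$ decays like $(b+n)^{-2}$. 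Adding the three, $C_0 = O\!\left((b+n)^{-\eta}\right)$ where $\eta = \min\{\min(a\sigma_{\min}^2,2),\ \min(2a\sigma_\ell^2,1),\ 2\}$; this $\eta$ is strictly positive and at most $2$, and the only place the index $\ell$ enters is through the middle term $2a\sigma_\ell^2$, which is precisely the ``partially dependent on $\ell$'' remark in the statement. Absorbing the fixed constant $b^{\eta}$ rewrites the bound in the announced form $\left(\frac{b}{b+n}\right)^{\eta}$.

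The one point where I expect care to be needed is the borderline exponents: Lemmas~\ref{16} and~\ref{17} are stated only for the strict inequalities $a\sigma_{\min}^2 \ne 2$ and $a\sigma_\ell^2 \ne \tfrac12$, and at the thresholds the governing integral $\int x^{-1}\,\mathrm{d}x$ contributes an extra logarithmic factor rather than a clean power. This is harmless: one replaces $\eta$ by any slightly smaller positive number, which absorbs the logarithm, so the conclusion still holds with $\eta \in (0,2]$. I would also, as a routine check, confirm that every multiplicative constant produced along the way depends only on $a, b, M, \tilde{L}, c(A), \sigma_{\max}, \sigma_{\min}, F_*$ and never on $n$; this is clear from the referenced lemmas. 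Beyond this bookkeeping there is no real obstacle.
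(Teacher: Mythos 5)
Your proposal is correct and follows exactly the route the paper intends: Theorem~\ref{18} is just the assembly of Lemma~\ref{14a} (for $A_0$), Lemma~\ref{15} (for $B_0$), and Lemmas~\ref{16}--\ref{17} together with the explicit term $C(\alpha_n)=2a^2M\tilde{L}F_*/(b+n)^2$ (for $C_0$), with $\eta$ taken as the minimum of the resulting exponents. Your extra remark on the borderline cases $a\sigma_{\min}^2=2$ and $a\sigma_\ell^2=\tfrac12$ (logarithmic factor absorbed by a slightly smaller $\eta$) is a sensible refinement the paper glosses over, but it does not change the argument.
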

\noindent
Since we have not made any assumption whether the given problem is consistent or not, we can ensure the randomized Kaczmarz algorithm with diminishing step size will converge to \(x_*\) in the inconsistent case, while the classical randomized Kaczmarz algorithm cannot.

Notice there is a natural trade off between the convergence rate of \(|\langle x_k-x_*, v_\ell\rangle|\) and \(\mathbb{V} \langle x_k-x_*, v_\ell \rangle\): if \(a\) is small, then though we will converge in a slower rate, the variance can be significantly smaller than if \(a\) is large.

Through the numerical experiments, we have also noticed that in general, as the number of iterations grows, the convergence speed of \(\langle x_k-x_*, v_\ell\rangle\) will decrease, especially for small \(\ell\), which is not surprising considering the upper bound we get for \(\mathbb{E}[\langle x_k-x_*, v_\ell\rangle^2]\): Both \(B_0\) and \(C_0\) are dependent on \(\sigma_{\min}^2\) to some extent, so as the number of iterations progresses, \(B_0\) and \(C_0\) will become dominant. Still, there is an \(a^2\) in \(B_0\) and \(C_0\). So when \(a\) is small, we can still expect that \(\langle x_k-x_*, v_\ell\rangle\) converges faster for smaller \(\ell\), at least when the number of iterations is not too large. 

Figure~\ref{Figure2} represents the case where \(A \in \mathbb{R}^{30 \times 20}\), \(\sigma(A) \in [0.1, 1]\), \(a=0.5, b=20\), \(l_1=1, l_2=10, l_3=20\), Iter=10000, Rept=20 for a consistent problem. And Figure~\ref{Figure3} represents the case for a general (inconsistent) problem with the same configuration. Notice that because Iter is relatively small, our predictions on \(\langle x_k-x_*, v_\ell\rangle\) are in general accurate, both for consistent and inconsistent case. Since \(\left(v_\ell\right)_{l=1}^{N}\) form an orthonormal basis for \(\mathbb{R}^{N}\), we have 
\begin{proposition} \label{25}
    \[ \mathbb{E}[\|x_k-x_*\| ^2] = \sum_{l=1}^{N} \mathbb{E}[\langle x_k-x_*, v_\ell\rangle ^2] \]
\end{proposition}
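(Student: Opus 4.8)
The plan is to expand $x_k - x_*$ in the orthonormal eigenbasis $\{v_\ell\}_{\ell=1}^N$ of the Hessian $H = A^\top A$ and then apply Parseval's identity together with linearity of expectation. Since $A \in \mathbb{R}^{M\times N}$ with $M \geq N$ and $H = A^\top A$ is symmetric positive semidefinite of size $N$, its normalized eigenvectors $v_1,\dots,v_N$ (the right singular vectors of $A$) form an orthonormal basis of $\mathbb{R}^N$; this is exactly the setup already fixed in Lemma~\ref{2}.

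First I would write, for each realization of the SGD iterates, the deterministic identity
\[
x_k - x_* = \sum_{\ell=1}^{N} \langle x_k - x_*, v_\ell\rangle\, v_\ell,
\]
and hence, by orthonormality of the $v_\ell$,
\[
\|x_k - x_*\|^2 = \sum_{\ell=1}^{N} \langle x_k - x_*, v_\ell\rangle^2 .
\]
This is just the Pythagorean theorem / Parseval's identity in $\mathbb{R}^N$. Then I would take expectations of both sides: since the sum on the right is finite (only $N$ terms) and each summand is nonnegative, linearity of expectation applies with no convergence subtleties, giving
\[
\mathbb{E}\big[\|x_k - x_*\|^2\big] = \sum_{\ell=1}^{N} \mathbb{E}\big[\langle x_k - x_*, v_\ell\rangle^2\big],
\]
which is the claim.

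There is essentially no obstacle here: the only things being used are that $\{v_\ell\}$ is an orthonormal basis (finite-dimensional, so no issue of completeness to verify) and that expectation is linear over a finite sum. The one point worth a half-sentence of care is that the identity holds pathwise in the randomness of the $i_k$'s, so taking expectations is legitimate term by term; finiteness of $N$ makes even Tonelli/dominated-convergence considerations unnecessary.
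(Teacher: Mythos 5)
Your proof is correct and follows exactly the reasoning the paper intends: the paper justifies Proposition~\ref{25} solely by noting that $\{v_\ell\}_{\ell=1}^N$ is an orthonormal basis of $\mathbb{R}^N$, i.e.\ the pathwise Parseval identity followed by linearity of expectation over the finite sum. Your added remark that the identity holds for every realization of the random indices, so taking expectations term by term is unproblematic, is a sensible clarification but not a departure from the paper's argument.
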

\noindent
Theorem~\ref{18} tells us that as iteration progresses, $B_0$ and $C_0$ will become dominant, and the convergence of $\langle x_k-x_*, v_\ell$ will slow down for small $\ell$. Therefore, we should expect that \(\|x_k-x_*\|^2\) will experience phase transition: its convergence speed will decrease as the number of iterations increases. And our numerical experiments support us for the claim. Figure~\ref{Figure4} represents the case where \(A \in \mathbb{R}^{10000 \times 3000}\), \(\sigma(A) \in [0.01, 1]\), \(a=0.5, b=150\), Iter=\(10^6\), Rept=1 for a consistent problem. Note that the convergence speed starts to decrease at around 30000 iterations.

Theorem~\ref{5} tells us that we can expect faster convergence of \(\mathbb{E}[\langle x_k-x_*, v_\ell\rangle]\) if step sizes are larger. So naturally we want to explore other step sizes that decay slower. That leads to the discussion in the next section.
\section{\(1/k^\gamma\) Decaying Step Size}
Now we consider the step size which takes the form \(\alpha_k = \frac{a}{(b+k)^\gamma}\). By Robbins-Monro conditions~\cite{robbins1951stochastic}, we only need to consider the case when \(\frac{1}{2}<\gamma<1\). As usual, we first give an upper bound of \(\mathbb{E}[\langle x_k-x_*, v_\ell\rangle]\).

\begin{theorem} \label{19}
    If we use SGD with step size \(\alpha_k = \frac{a}{(b+k)^\gamma}\) for \(1/2 \leq \gamma \leq 1\), then we have
    \[|\mathbb{E}[\langle x_{n+1}-x_*, v_\ell\rangle]| \leq e^{\left(\frac{a\sigma_\ell^2}{1-\gamma}b^{1-\gamma}\right)}e^{-\left(\frac{a\sigma_\ell^2}{1-\gamma}(b+n)^{1-\gamma}\right)}|\langle x_0-x_*, v_\ell\rangle|\]
\end{theorem}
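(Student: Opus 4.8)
The plan is to start, as in the proofs of Theorems~\ref{14} and~\ref{12}, from the exact product formula supplied by Theorem~\ref{5}:
\[
\mathbb{E}[\langle x_{n+1}-x_*, v_\ell\rangle] = \left(\prod_{k=0}^{n}\left(1-\frac{a\sigma_\ell^2}{(b+k)^\gamma}\right)\right)\langle x_0-x_*, v_\ell\rangle,
\]
so everything reduces to bounding the magnitude of the product $\prod_{k=0}^{n}\bigl(1-\tfrac{a\sigma_\ell^2}{(b+k)^\gamma}\bigr)$. Taking absolute values and then logarithms, the key inequality to exploit is $\log(1-t)\le -t$ for $t<1$ (the same device used in Theorem~\ref{14}), which gives
\[
\log\left|\prod_{k=0}^{n}\left(1-\frac{a\sigma_\ell^2}{(b+k)^\gamma}\right)\right| \le -\sum_{k=0}^{n}\frac{a\sigma_\ell^2}{(b+k)^\gamma}.
\]
Then I would lower-bound the sum $\sum_{k=0}^{n}(b+k)^{-\gamma}$ by the integral $\int_{0}^{n+1}(b+x)^{-\gamma}\,dx = \frac{1}{1-\gamma}\bigl((b+n+1)^{1-\gamma}-b^{1-\gamma}\bigr)$, using that $(b+x)^{-\gamma}$ is decreasing so the left Riemann sum dominates the integral. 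Exponentiating back yields
\[
\left|\prod_{k=0}^{n}\left(1-\frac{a\sigma_\ell^2}{(b+k)^\gamma}\right)\right| \le \exp\!\left(-\frac{a\sigma_\ell^2}{1-\gamma}\bigl((b+n)^{1-\gamma}-b^{1-\gamma}\bigr)\right) = e^{\frac{a\sigma_\ell^2}{1-\gamma}b^{1-\gamma}}\,e^{-\frac{a\sigma_\ell^2}{1-\gamma}(b+n)^{1-\gamma}},
\]
which is exactly the claimed bound (replacing $b+n+1$ by $b+n$ only loosens it). Combining with the product formula and taking absolute values finishes the argument.

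A couple of technical points need care. First, for the inequality $\log(1-t)\le -t$ and for $\prod$ to make sense one wants each factor $1-\frac{a\sigma_\ell^2}{(b+k)^\gamma}$ to be positive; this holds once $b^\gamma > a\sigma_\ell^2$, which is implicitly assumed (as in the $1/k$ case, where $\alpha_k\le \frac{1}{2M\tilde L}$ is imposed), and in any event factors that are nonpositive only help, since then the partial product through that index already vanishes or flips sign without increasing the absolute value beyond the first such factor — but the clean statement just assumes the standard smallness of $a$. Second, I should be slightly careful about the index shift in the integral comparison: $\sum_{k=0}^{n}(b+k)^{-\gamma} \ge \int_{0}^{n+1}(b+x)^{-\gamma}dx$ gives the exponent $-\frac{a\sigma_\ell^2}{1-\gamma}\bigl((b+n+1)^{1-\gamma}-b^{1-\gamma}\bigr)$, and then I use $(b+n+1)^{1-\gamma}\ge (b+n)^{1-\gamma}$ to land on the stated form; this monotonicity of $x\mapsto x^{1-\gamma}$ holds since $1-\gamma>0$ for $\gamma<1$. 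The boundary case $\gamma=1$ is excluded here since it is covered by Theorem~\ref{14}, though one could note $\frac{1}{1-\gamma}(b+n)^{1-\gamma}\to\log(b+n)$ as $\gamma\to1$ recovers the earlier rate.

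I do not anticipate a genuine obstacle: this is the $1/k$-argument of Theorem~\ref{14} with the harmonic sum replaced by a $p$-series and the logarithm replaced by a power. The only place demanding a moment's thought is the integral/sum comparison with the correct direction of inequality and the off-by-one in the limits, which I would state explicitly rather than grind through. Everything else is bookkeeping.
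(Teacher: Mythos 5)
Your proposal is correct and follows essentially the same route as the paper's proof: start from the exact product formula of Theorem~\ref{5}, apply $\log(1-t)\le -t$, and lower-bound the resulting sum $\sum_k (b+k)^{-\gamma}$ by the integral $\int x^{-\gamma}\,dx$ before exponentiating. Your extra care with the off-by-one in the integral limits and with positivity of the factors (the implicit smallness assumption on $a$) only tightens bookkeeping the paper glosses over; there is no substantive difference.
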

\begin{proof}
    From Theorem~\ref{5}, we have
    \[\mathbb{E}[\langle x_{n+1}-x_*, v_\ell\rangle] = \prod_{k=0}^{n} \left(1-\frac{a\sigma_\ell^2}{(b+k)^\gamma}\right) \langle x_0-x_*, v_\ell\rangle\]
    \begin{align*}
        \log \left(\prod_{k=0}^{n} \left(1-\frac{a\sigma_\ell^2}{(b+k)^\gamma}\right)\right)
        &= \sum_{k=0}^{n} \log\left(1-\frac{a\sigma_\ell^2}{(b+k)^\gamma}\right) \\
        &\leq -a\sigma_\ell^2\sum_{k=b}^{b+n} \frac{1}{k^\gamma} \\
        &\leq -a\sigma_\ell^2\int_{m}^{n} \frac{1}{x^\gamma} \mathrm{d}x \\
        &\leq -\frac{a\sigma_\ell^2}{1-\gamma}\left((b+n)^{1-\gamma}-b^{1-\gamma}\right)
    \end{align*}
    Therefore, taking the absolute value on both sides, we have
    \[ |\mathbb{E}[\langle x_{n+1}-x_*, v_\ell\rangle]| \leq e^{\left(\frac{a\sigma_\ell^2}{1-\gamma}b^{-\gamma+1}\right)}e^{-\left(\frac{a\sigma_\ell^2}{1-\gamma}(b+n)^{-\gamma+1}\right)}|\langle x_0-x_*, v_\ell\rangle| \]
\end{proof}
\noindent
As a result, we know that for \(\gamma < 1\), \(\mathbb{E}[\langle x_n-x_*, v_\ell\rangle]\) converges to 0 with a root exponential convergence rate. Now we start to focus on \(\mathbb{E}[\langle x_k-x_*, v_\ell\rangle^2]\). From Theorem~\ref{10}, we know that \[\mathbb{E}[\langle x_{n+1}-x_*, v_\ell\rangle^2 | x_0] \leq A_0\langle x_0-x_*, v_\ell\rangle^2 + B_0 \|x_0-x_*\|^2 + C_0\]
We give the bounds of \(A_0, B_0, C_0\) respectively in the following lemmas.

\begin{lemma} \label{20}
    \(A_k \leq e^{\left(\frac{2a\sigma_\ell^2}{1-\gamma}(b+k)^{1-\gamma}\right)}e^{-\left(\frac{2a\sigma_\ell^2}{1-\gamma}(b+n)^{1-\gamma}\right)}\)
\end{lemma}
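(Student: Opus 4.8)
The plan is to proceed exactly as in the proof of Theorem~\ref{19}, which already handled the analogous product $\prod_k(1-\frac{a\sigma_\ell^2}{(b+k)^\gamma})$, but now applied to the product defining $A_k$. Recall from Lemma~\ref{8} and Theorem~\ref{10} that $A_k = \prod_{i=k}^n A(\alpha_i) = \prod_{i=k}^n(1-2\alpha_i\sigma_\ell^2) = \prod_{i=k}^n\bigl(1-\frac{2a\sigma_\ell^2}{(b+i)^\gamma}\bigr)$. The only structural difference from Theorem~\ref{19} is that the constant in front is $2a\sigma_\ell^2$ rather than $a\sigma_\ell^2$, and the product starts at $i=k$ rather than $i=0$. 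So I would substitute $c = 2a\sigma_\ell^2$ in place of $a\sigma_\ell^2$ and replace the lower endpoint $b$ by $b+k$ in the integral comparison.

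Concretely, the key steps in order are: first, take logarithms, $\log A_k = \sum_{i=k}^n \log\bigl(1-\frac{2a\sigma_\ell^2}{(b+i)^\gamma}\bigr)$; second, apply the elementary bound $\log(1-t)\le -t$ to each term to get $\log A_k \le -2a\sigma_\ell^2\sum_{i=k}^n (b+i)^{-\gamma}$; third, bound the sum below by the integral, $\sum_{i=k}^n (b+i)^{-\gamma} \ge \int_{b+k}^{b+n+1} x^{-\gamma}\,dx \ge \int_{b+k}^{b+n} x^{-\gamma}\,dx = \frac{1}{1-\gamma}\bigl((b+n)^{1-\gamma}-(b+k)^{1-\gamma}\bigr)$; fourth, exponentiate to obtain
\[
A_k \le \exp\!\left(-\frac{2a\sigma_\ell^2}{1-\gamma}\bigl((b+n)^{1-\gamma}-(b+k)^{1-\gamma}\bigr)\right) = e^{\left(\frac{2a\sigma_\ell^2}{1-\gamma}(b+k)^{1-\gamma}\right)}e^{-\left(\frac{2a\sigma_\ell^2}{1-\gamma}(b+n)^{1-\gamma}\right)},
\]
which is exactly the claimed bound.

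I do not anticipate a genuine obstacle here; this lemma is a direct transcription of the estimate already carried out in Theorem~\ref{19}. The only points requiring a little care are bookkeeping ones: making sure the integral comparison uses the correct endpoints (the summand $(b+i)^{-\gamma}$ is decreasing in $i$, so it dominates $\int_{i}^{i+1}(b+x)^{-\gamma}dx$, giving the lower bound on the sum), and implicitly requiring $\alpha_i < 1/(2\sigma_\ell^2)$ so that each factor $1-\frac{2a\sigma_\ell^2}{(b+i)^\gamma}$ is positive and the logarithm is defined — this holds for $b$ large enough, consistent with the standing assumptions (e.g. $\alpha_k\le \frac{1}{2M\tilde L}$ from Lemma~\ref{9}). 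With those caveats noted, the four-line computation above completes the proof.
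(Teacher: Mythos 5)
Your proposal is correct and follows essentially the same route as the paper, which proves Lemma~\ref{20} by citing the log--integral estimate from Theorem~\ref{19} with $a\sigma_\ell^2$ replaced by $2a\sigma_\ell^2$ and the product starting at $i=k$. Your added remarks on the integral endpoints and on the positivity of the factors $1-2a\sigma_\ell^2/(b+i)^\gamma$ are sensible bookkeeping points that the paper leaves implicit.
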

\begin{proof}
    Recall that 
    $$
    A_k = \prod_{i=k}^{n} A(\alpha_i) = \prod_{i=k}^{n} \left(1-\frac{2a\sigma_\ell^2}{(b+k)^\gamma}\right) \leq e^{\left(\frac{2a\sigma_\ell^2}{1-\gamma}(b+k)^{1-\gamma}\right)}e^{-\left(\frac{2a\sigma_\ell^2}{1-\gamma}(b+n)^{1-\gamma}\right)}
    $$
    Where the last inequality follows similarly from the proof of Lemma~\ref{19}. \\
\end{proof}

\begin{lemma} \label{21}
    \(B_k \leq \frac{a^2M\tilde{L}c(A)\sigma_{\max}^2}{2\gamma-1}e^{\left(\frac{a\sigma_{\min}^2}{1-\gamma}(b+k+1)^{1-\gamma}\right)}(b+k-1)^{1-2\gamma}e^{-\left(\frac{a\sigma_{\min}^2}{1-\gamma}(b+n+1)^{1-\gamma}\right)}\)
\end{lemma}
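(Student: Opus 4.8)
The plan is to mirror exactly the strategy used for Lemma~\ref{15} (the $1/k$ case), replacing the harmonic-sum estimates by their $1/k^\gamma$ analogues. Recall from Lemma~\ref{11} that $B_k = \sum_{i=0}^{n-k} \tilde{B}_i$ with $\tilde{B}_i = A(\alpha_n)\cdots A(\alpha_{i+k+1})B(\alpha_{i+k})p(\alpha_{i+k-1})\cdots p(\alpha_k)$. First I would use the two structural facts that $A(\alpha_j) \leq p(\alpha_j)$ for every $j$ (immediate since $\sigma_\ell^2 \geq \sigma_{\min}^2$, so $1-2\alpha_j\sigma_\ell^2 \leq 1-\alpha_j\sigma_{\min}^2$) and that $p(\alpha_j) = 1-\frac{a\sigma_{\min}^2}{(b+j)^\gamma}$ is increasing in $j$. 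These give the same pointwise bound as in Lemma~\ref{15}: $\tilde{B}_i \leq B(\alpha_{i+k})\, p(\alpha_n)\cdots p(\alpha_{k+1})$, and hence
\[
B_k \leq \left(\sum_{i=k}^{n} B(\alpha_i)\right)\left(\prod_{i=k+1}^{n} p(\alpha_i)\right)
= a^2 M\tilde{L}c(A)\sigma_{\max}^2 \left(\sum_{i=k}^{n} \frac{1}{(b+i)^{2\gamma}}\right)\left(\prod_{i=k+1}^{n}\left(1-\frac{a\sigma_{\min}^2}{(b+i)^\gamma}\right)\right).
\]

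Next I would bound the two factors separately. For the product, the computation in the proof of Theorem~\ref{19}/Lemma~\ref{20} applies verbatim with $c = a\sigma_{\min}^2$ and the exponent shifted by one index: taking logarithms, $\sum_{i=k+1}^{n}\log(1-\frac{a\sigma_{\min}^2}{(b+i)^\gamma}) \leq -a\sigma_{\min}^2\int_{b+k+1}^{b+n+1} x^{-\gamma}\,dx = -\frac{a\sigma_{\min}^2}{1-\gamma}\big((b+n+1)^{1-\gamma}-(b+k+1)^{1-\gamma}\big)$, so the product is at most $e^{\frac{a\sigma_{\min}^2}{1-\gamma}(b+k+1)^{1-\gamma}}e^{-\frac{a\sigma_{\min}^2}{1-\gamma}(b+n+1)^{1-\gamma}}$. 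For the sum, since $2\gamma > 1$ the series converges and an integral comparison gives $\sum_{i=k}^{n}(b+i)^{-2\gamma} \leq \int_{b+k-1}^{\infty} x^{-2\gamma}\,dx = \frac{(b+k-1)^{1-2\gamma}}{2\gamma-1}$. Multiplying the three pieces yields exactly the claimed bound
\[
B_k \leq \frac{a^2M\tilde{L}c(A)\sigma_{\max}^2}{2\gamma-1}\, e^{\frac{a\sigma_{\min}^2}{1-\gamma}(b+k+1)^{1-\gamma}} (b+k-1)^{1-2\gamma}\, e^{-\frac{a\sigma_{\min}^2}{1-\gamma}(b+n+1)^{1-\gamma}}.
\]

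I do not expect a serious obstacle here; the only points requiring a little care are getting the index shifts in the sum and product to line up so that the final expression has precisely $(b+k-1)^{1-2\gamma}$, $(b+k+1)^{1-\gamma}$, and $(b+n+1)^{1-\gamma}$ rather than nearby variants — this is the same bookkeeping that was absorbed into the last two inequalities of Lemma~\ref{15}, and I would handle it by choosing the integral endpoints generously (lower endpoint $b+k-1$ for the sum, shift $+1$ throughout the product estimate) so that each bound is valid with room to spare. The genuinely substantive inequalities ($A \leq p$, monotonicity of $p$, and the log/integral comparison) are all either trivial or already established earlier in the excerpt.
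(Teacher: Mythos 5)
Your proposal is correct and follows essentially the same route as the paper: reduce to \(B_k \leq \bigl(\sum_{i=k}^{n} B(\alpha_i)\bigr)\bigl(\prod_{i=k+1}^{n} p(\alpha_i)\bigr)\) via the Lemma~\ref{15} argument, bound the sum by \(\int_{b+k-1}^{\infty} x^{-2\gamma}\,\mathrm{d}x = \frac{(b+k-1)^{1-2\gamma}}{2\gamma-1}\), and bound the product by the log/integral comparison as in Theorem~\ref{19}/Lemma~\ref{20}, with the same index endpoints \((b+k+1)^{1-\gamma}\) and \((b+n+1)^{1-\gamma}\). No gaps; this is the paper's proof in slightly more explicit form.
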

\begin{proof}
     Since the step size \(\alpha_k\) is decreasing, similar to the proof of Lemma \ref{15}, we still have \(B_k \leq \left(\sum_{i=k}^{n} B(\alpha_i)\right)\left(\prod_{i=k+1}^{n} p(\alpha_i)\right)\).
     For \(\sum_{i=k}^{n} B(\alpha_i)\), we have
     \begin{align*}
         \sum_{i=k}^{n} B(\alpha_i) 
        &= \alpha^2M\tilde{L}c(A)\sigma_{\max}^2\sum_{i=b+k}^{b+n}\frac{1}{i^{2\gamma}} \\
        &\leq \alpha^2M\tilde{L}c(A)\sigma_{\max}^2 \int_{b+k-1}^{b+n} \frac{1}{x^{2\gamma}} \mathrm{d}x \\
        &\leq \frac{a^2M\tilde{L}c(A)\sigma_{\max}^2}{2\gamma-1}(b+k-1)^{1-2\gamma}
     \end{align*}
     Similar to the proof of Theorem~\ref{20}, we have \[ \prod_{i=k+1}^{n} p(\alpha_i) \leq e^{\left(\frac{a\sigma_{\min}^2}{1-\gamma}(b+k+1)^{1-\gamma}\right)} e^{-\left(\frac{a\sigma_{\min}^2}{1-\gamma}(b+n+1)^{1-\gamma}\right)} \]
     
\end{proof}
\noindent
Therefore we know \(B_0\) converges to 0 with a root exponential convergence rate independent on \(\sigma_\ell^2\). 

For \(C_0\), we know \(C_0 \leq \left(\sum_{i=1}^{n} q(\alpha_{i-1})B_i\right) + \left(\sum_{i=1}^{n} A_iC(\alpha_{i-1})\right) + C(\alpha_n)\) from Theorem~\ref{10}. To bound \(C_0\), we have the following lemmas.

\begin{lemma} \label{22}
    When \(n\) sufficiently large, we have \[\sum_{i=1}^{n} q(\alpha_{i-1})B_i \leq \frac{a^4M\tilde{L}c(A)\sigma_{\max}^2\sigma^2}{2(2\gamma-1)^2}\left(\frac{b-1}{b+1}\right)^{1-4\gamma} \left(e^{-\frac{a\sigma_{\min}^2}{1-\gamma}\left(1-\left(\frac{1}{2}\right)^{1-\gamma}\right)(b+n+1)^{1-\gamma}} (b+1)^{2-4\gamma} + \left(\frac{b+n+1}{2}\right)^{2-4\gamma}\right)\]
\end{lemma}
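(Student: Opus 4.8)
The plan is to mimic the structure used in Lemma~\ref{16}, but carry the root-exponential factors through instead of the power-type factors. First I would substitute the bounds already available: from Lemma~\ref{21} we have $B_i \leq \frac{a^2M\tilde L c(A)\sigma_{\max}^2}{2\gamma-1}\,e^{\frac{a\sigma_{\min}^2}{1-\gamma}(b+i+1)^{1-\gamma}}(b+i-1)^{1-2\gamma}e^{-\frac{a\sigma_{\min}^2}{1-\gamma}(b+n+1)^{1-\gamma}}$, and $q(\alpha_{i-1}) = 2\alpha_{i-1}^2\sigma^2 = 2a^2\sigma^2(b+i-1)^{-2\gamma}$. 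Multiplying these and summing over $i$ from $1$ to $n$ pulls out the constant $\frac{2a^4M\tilde L c(A)\sigma_{\max}^2\sigma^2}{2\gamma-1}$ and the global factor $e^{-\frac{a\sigma_{\min}^2}{1-\gamma}(b+n+1)^{1-\gamma}}$, leaving the sum
\[
S_n := \sum_{i=1}^{n} (b+i-1)^{1-4\gamma}\,e^{\frac{a\sigma_{\min}^2}{1-\gamma}(b+i+1)^{1-\gamma}}.
\]
The bound on $S_n$ is the crux of the argument.

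To estimate $S_n$ I would first replace $(b+i-1)$ by $(b+i+1)$ in the polynomial factor at the cost of the ratio $\big(\frac{b+i-1}{b+i+1}\big)^{1-4\gamma} \leq \big(\frac{b-1}{b+1}\big)^{1-4\gamma}$ (a uniform constant for $i\geq 1$ since $1-4\gamma<0$), so that all terms depend on the single variable $t = b+i+1$. Then compare $S_n$ with the integral $\int_{b+2}^{b+n+1} t^{1-4\gamma}e^{\frac{a\sigma_{\min}^2}{1-\gamma}t^{1-\gamma}}\,dt$. The integrand $g(t) = t^{1-4\gamma}e^{c t^{1-\gamma}}$ with $c = \frac{a\sigma_{\min}^2}{1-\gamma}>0$ is not monotone in general, but for $n$ (hence $t$) large the exponential dominates, so $g$ is eventually increasing and the sum is bounded by the integral plus a boundary term. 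This is exactly where the ``$n$ sufficiently large'' hypothesis is needed, and it is the main obstacle: one must argue that past some threshold $t_0$ (depending on $a,\sigma_{\min},\gamma$) the factor $e^{ct^{1-\gamma}}$ beats the decay $t^{1-4\gamma}$, split $S_n$ into $\sum_{t\le t_0}$ (a fixed constant, absorbable) plus $\sum_{t>t_0}$ (comparable to the integral).

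Finally I would evaluate the integral by the substitution $u = t^{1-\gamma}$, $du = (1-\gamma)t^{-\gamma}\,dt$, which turns $\int t^{1-4\gamma}e^{cu}\,dt$ into $\frac{1}{1-\gamma}\int u^{(2-3\gamma)/(1-\gamma)}e^{cu}\,du$; integrating by parts once (or bounding $u^{\text{power}}$ by its value at the endpoint, since the exponential is the dominant factor) yields a leading term of the form $\frac{1}{c(1-\gamma)}\,t^{1-2\gamma}e^{ct^{1-\gamma}}\big|_{\text{endpoints}}$ up to lower-order corrections. Evaluated at $t = b+n+1$ this produces $e^{\frac{a\sigma_{\min}^2}{1-\gamma}(b+n+1)^{1-\gamma}}$, which cancels most of the global prefactor $e^{-\frac{a\sigma_{\min}^2}{1-\gamma}(b+n+1)^{1-\gamma}}$ but not all of it: the split of $S_n$ at a point like $(b+n+1)/2$ is what creates the two distinct contributions in the claimed bound, namely the ``small-$i$'' part carrying $e^{-\frac{a\sigma_{\min}^2}{1-\gamma}(1-(1/2)^{1-\gamma})(b+n+1)^{1-\gamma}}(b+1)^{2-4\gamma}$ (from indices $i$ with $b+i+1 \le (b+n+1)/2$, where the residual exponential is genuinely decaying) and the ``large-$i$'' part $\big(\frac{b+n+1}{2}\big)^{2-4\gamma}$ (from indices near $n$, where the exponentials roughly cancel and only the polynomial survives). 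Collecting the constant $\frac{1}{c(1-\gamma)} = \frac{1}{a\sigma_{\min}^2}$ and re-inserting it, together with the constant pulled out at the start, into a single prefactor, and bounding $\frac{1}{2\gamma-1}$-type leftovers by $\frac{1}{(2\gamma-1)^2}$, gives the stated inequality. The routine work is the bookkeeping of these constants and the precise endpoint evaluation of the integral; the conceptual step is the large-$n$ monotonicity argument and the choice of the midpoint split that separates the two terms.
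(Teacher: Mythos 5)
You end up with essentially the paper's proof: after substituting the Lemma~\ref{21} bound for $B_i$ and $q(\alpha_{i-1})=2a^2\sigma^2(b+i-1)^{-2\gamma}$, the decisive device is the same midpoint split at $(b+n+1)/2$ --- on the lower half the exponential is bounded by its value at the midpoint, leaving the residual decay $e^{-\frac{a\sigma_{\min}^2}{1-\gamma}\left(1-(1/2)^{1-\gamma}\right)(b+n+1)^{1-\gamma}}$ times a convergent sum of $k^{1-4\gamma}$ of size about $(b+1)^{2-4\gamma}$, while on the upper half the exponentials cancel, leaving about $\left(\frac{b+n+1}{2}\right)^{2-4\gamma}$. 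The integral-comparison/monotonicity preamble and the integration-by-parts evaluation are superfluous (the paper needs neither; moreover the by-parts leading term would be $t^{1-3\gamma}e^{ct^{1-\gamma}}/(c(1-\gamma))$, not $t^{1-2\gamma}$, and would not reproduce the stated constants), but since the bound you ultimately invoke is the split itself, your route coincides with the paper's.
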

\begin{proof}
    Without loss of generality, assume \(n/2 \in \mathbb{N}\). Therefore $\forall c \in \mathbb{R}$, we have
    \begin{align*}
        e^{-cn^{1-\gamma}} \sum_{k=m}^{n} e^{ck^{1-\gamma}}k^{1-4\gamma}
        &= e^{-cn^{1-\gamma}}\sum_{k=m}^{n/2} e^{ck^{1-\gamma}}k^{1-4\gamma}+ e^{-cn^{1-\gamma}}\sum_{k=n/2+1}^{n}e^{ck^{1-\gamma}}k^{1-4\gamma} \\
        &\leq e^{-c\left(n^{1-\gamma}-(n/2)^{1-\gamma}\right)}\sum_{k=m}^{n/2}k^{1-4\gamma} + \sum_{k=n/2+1}^{n}k^{1-4\gamma} \\
        &\leq e^{-c\left(n^{1-\gamma}-(n/2)^{1-\gamma}\right)} \frac{1}{4\gamma-2}(m-1)^{2-4\gamma} + \frac{1}{4\gamma-2}\left(\frac{n}{2}\right)^{2-4\gamma} \\
        &= e^{-c\left(1-\left(\frac{1}{2}\right)^{1-\gamma}\right)n^{1-\gamma}} \frac{1}{4\gamma-2}(m-1)^{2-4\gamma} + \frac{1}{4\gamma-2}\left(\frac{n}{2}\right)^{2-4\gamma}
    \end{align*}
    Letting \(c=\frac{a\sigma_{\min}^2}{1-\gamma}\), we then have
    \begin{align*}
        \sum_{i=1}^{n} q(\alpha_{i-1})B_i
        &\leq \frac{a^4M\tilde{L}c(A)\sigma_{\max}^2\sigma^2}{2\gamma-1} \left(\frac{b-1}{b+1}\right)^{1-4\gamma} e^{-c(b+n+1)^{1-\gamma}} \sum_{k=b+2}^{b+n+1} e^{ck^{1-\gamma}}k^{1-4\gamma} \\
        &\leq \frac{a^4M\tilde{L}c(A)\sigma_{\max}^2\sigma^2}{2(2\gamma-1)^2} \left(\frac{b-1}{b+1}\right)^{1-4\gamma} \left(e^{-\frac{a\sigma_{\min}^2}{1-\gamma}\left(1-\left(\frac{1}{2}\right)^{1-\gamma}\right)(b+n+1)^{1-\gamma}} (b+1)^{2-4\gamma} + \left(\frac{b+n+1}{2}\right)^{2-4\gamma}\right) \\
    \end{align*} 
\end{proof}

\begin{lemma} \label{23}
    When \(n\) sufficiently large, we have
    \[ \sum_{i=1}^{n} A_iC(\alpha_{i-1}) \leq \frac{2a^2M\tilde{L}F_*\sigma^2}{2\gamma-1}\left(\frac{b-1}{b}\right)^{-2\gamma} \left(e^{-\frac{2a\sigma_\ell^2}{1-\gamma}\left(1-\left(\frac{1}{2}\right)^{1-\gamma}\right)n^{1-\gamma}} b^{1-2\gamma} + \left(\frac{b+n}{2}\right)^{1-2\gamma}\right)\]
\end{lemma}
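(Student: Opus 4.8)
The plan is to mirror exactly the structure used in Lemma~\ref{16} and Lemma~\ref{22}, since $\sum_{i=1}^n A_i C(\alpha_{i-1})$ has the same shape as $\sum_{i=1}^n q(\alpha_{i-1})B_i$ but with $A_i$ (controlled by $2a\sigma_\ell^2$) replacing $B_i$, and with the constant $C(\alpha_{i-1}) = 2\alpha_{i-1}^2 M\tilde L F_* = 2a^2 M\tilde L F_* (b+i-1)^{-2\gamma}$ replacing $q(\alpha_{i-1})$. First I would substitute the bound $A_i \leq e^{\frac{2a\sigma_\ell^2}{1-\gamma}(b+i)^{1-\gamma}} e^{-\frac{2a\sigma_\ell^2}{1-\gamma}(b+n)^{1-\gamma}}$ from Lemma~\ref{20} together with $C(\alpha_{i-1}) = 2a^2 M\tilde L F_* (b+i-1)^{-2\gamma}$, so that the sum becomes
\[
\sum_{i=1}^n A_i C(\alpha_{i-1}) \leq 2a^2 M\tilde L F_* \, e^{-c(b+n)^{1-\gamma}} \sum_{i=1}^n e^{c(b+i)^{1-\gamma}} (b+i-1)^{-2\gamma},
\]
where $c = \frac{2a\sigma_\ell^2}{1-\gamma}$. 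I would then absorb the shift by the elementary estimate $(b+i-1)^{-2\gamma} \leq \left(\frac{b-1}{b}\right)^{-2\gamma}(b+i)^{-2\gamma}$ (assuming $b>1$), reindex to $k = b+i$, and reduce to bounding $e^{-cn^{1-\gamma}}\sum_{k=m}^n e^{ck^{1-\gamma}} k^{-2\gamma}$.

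The key step is the same split-at-the-midpoint trick used in Lemma~\ref{22}: write $\sum_{k=m}^n = \sum_{k=m}^{n/2} + \sum_{k=n/2+1}^n$ (assuming $n/2\in\mathbb N$ WLOG). On the first half, $e^{ck^{1-\gamma}} \leq e^{c(n/2)^{1-\gamma}}$, so the prefactor $e^{-cn^{1-\gamma}}$ leaves $e^{-c(1-(1/2)^{1-\gamma})n^{1-\gamma}}$ times $\sum_{k=m}^{n/2} k^{-2\gamma}$, which is bounded by an integral; for $\gamma > 1/2$ we get $\sum k^{-2\gamma} \leq \frac{1}{2\gamma-1}(m-1)^{1-2\gamma}$. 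On the second half, $e^{ck^{1-\gamma}} \leq e^{cn^{1-\gamma}}$ cancels the prefactor entirely, leaving $\sum_{k=n/2+1}^n k^{-2\gamma} \leq \frac{1}{2\gamma-1}(n/2)^{1-2\gamma}$. Collecting constants with $m = b+1$ and restoring the shift factor $\left(\frac{b-1}{b}\right)^{-2\gamma}$ gives precisely the claimed bound, with the $\sigma^2$ factor entering through the same convention used in the neighbouring lemmas (carrying $C(\alpha_n)$ and the $\sigma^2$-dependence consistently with Lemma~\ref{9} and Lemma~\ref{22}).

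I expect the main obstacle to be purely bookkeeping rather than conceptual: matching the exact constants in the stated inequality (the factor $\frac{2a^2 M\tilde L F_* \sigma^2}{2\gamma-1}$, the shift term $\left(\frac{b-1}{b}\right)^{-2\gamma}$, and the two exponents $b^{1-2\gamma}$ and $\left(\frac{b+n}{2}\right)^{1-2\gamma}$) requires care in where the index shifts and the $(b+i-1)$ versus $(b+i)$ discrepancies are absorbed, and in reconciling the $\sigma^2$ that appears in the statement with the $F_*$-only form of $C(\alpha_k)$ from Lemma~\ref{8} — presumably $C(\alpha_n)$ and a bound relating $F_*$ to $\sigma^2$ are folded in as in Lemma~\ref{22}. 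The "$n$ sufficiently large" hypothesis is used exactly to make $n/2 \geq m$ so the split is non-vacuous and the integral bounds on the partial sums are valid. Once the split is set up, everything reduces to the same two integral estimates $\int x^{-2\gamma}\,dx$ already performed in Lemma~\ref{21} and Lemma~\ref{22}, so no new inequality is needed.
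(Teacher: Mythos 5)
Your proposal follows essentially the same route as the paper's proof: bound $A_i$ via Lemma~\ref{20}, pull out the index shift as the factor $\left(\frac{b-1}{b}\right)^{-2\gamma}$, and apply the same split-at-the-midpoint argument with the two integral estimates $\sum k^{-2\gamma} \leq \frac{1}{2\gamma-1}(m-1)^{1-2\gamma}$ and $\frac{1}{2\gamma-1}(n/2)^{1-2\gamma}$, exactly as in Lemma~\ref{22}. You are also right to flag the $\sigma^2$ factor: it does not arise from $C(\alpha_{k}) = 2\alpha_k^2 M\tilde{L}F_*$, and indeed the paper's own proof simply carries it along from Lemma~\ref{22} without justification, so your derivation (which produces the bound without $\sigma^2$) is if anything the more careful bookkeeping.
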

\begin{proof}
    The idea is almost the same as the proof of Lemma~\ref{22}. Without loss of generality, assume \(n/2 \in \mathbb{N}\). Therefore $\forall c \in \mathbb{R}$ we have
    \begin{align*}
        e^{-cn^{1-\gamma}} \sum_{k=m}^{n} e^{ck^{1-\gamma}}k^{-2\gamma}
        &= e^{-cn^{1-\gamma}}\sum_{k=m}^{n/2} e^{ck^{1-\gamma}}k^{-2\gamma}+ e^{-cn^{1-\gamma}}\sum_{k=n/2+1}^{n}e^{ck^{1-\gamma}}k^{-2\gamma} \\
        &\leq e^{-c\left(n^{1-\gamma}-(n/2)^{1-\gamma}\right)}\sum_{k=m}^{n/2}k^{-2\gamma} + \sum_{k=n/2+1}^{n}k^{-2\gamma} \\
        &\leq e^{-c\left(n^{1-\gamma}-(n/2)^{1-\gamma}\right)} \frac{1}{2\gamma-1}(m-1)^{1-2\gamma} + \frac{1}{2\gamma-1}\left(\frac{n}{2}\right)^{1-2\gamma} \\
        &= e^{-c\left(1-\left(\frac{1}{2}\right)^{1-\gamma}\right)n^{1-\gamma}} \frac{1}{2\gamma-1}(m-1)^{1-2\gamma} + \frac{1}{2\gamma-1}\left(\frac{n}{2}\right)^{1-2\gamma}
    \end{align*}
    Letting \(c=\frac{2a\sigma_\ell^2}{1-\gamma}\), we then have
    \begin{align*}
        \sum_{i=1}^{n} A_iC(\alpha_{i-1})
        &\leq 2a^2M\tilde{L}F_*\sigma^2 \left(\frac{b-1}{b}\right)^{-2\gamma} e^{-\frac{2a\sigma_\ell^2}{1-\gamma}(b+n)^{1-\gamma}}\sum_{k=b+1}^{b+n} e^{\frac{2a\sigma_\ell^2}{1-\gamma}k^{1-\gamma}} k^{-2\gamma} \\
        &\leq \frac{2a^2M\tilde{L}F_*\sigma^2}{2\gamma-1} \left(\frac{b-1}{b}\right)^{-2\gamma} \left(e^{-\frac{2a\sigma_\ell^2}{1-\gamma}\left(1-\left(\frac{1}{2}\right)^{1-\gamma}\right)n^{1-\gamma}} b^{1-2\gamma} + \left(\frac{b+n}{2}\right)^{1-2\gamma}\right) \\
    \end{align*}
\end{proof}
\noindent
Recall that in Theorem~\ref{10}, we know that \(\mathbb{E}[\langle x_{n+1}-x_*, v_\ell\rangle^2 | x_0] \leq A_0\langle x_0-x_*, v_\ell\rangle^2 + B_0 \|x_0-x_*\|^2 + C_0\). Having already derived the upper bounds of \(A_0, B_0, C_0\), now we can summarize the convergence speed of \(\mathbb{E}[\langle x_k-x_*, v_\ell\rangle^2]\).

\begin{theorem} \label{24}
    \(A_0\) converges to 0 with the rate \(e^{-\left(\frac{2a\sigma_\ell^2}{1-\gamma}(b+n)^{1-\gamma}\right)}\), \(B_0\) converges to 0 with the rate \(e^{-\left(\frac{a\sigma_{\min}^2}{1-\gamma}(b+n)^{1-\gamma}\right)}\), and \(C_0\) converges to 0 with the rate \(\left(\frac{b}{b+n}\right)^{\eta}\) for some \(\eta \in (0, 1]\).
\end{theorem}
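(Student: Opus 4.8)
\noindent The plan is to deduce Theorem~\ref{24} directly from Lemmas~\ref{20}--\ref{23}, together with the explicit decomposition \(C_0 = \sum_{i=1}^{n} q(\alpha_{i-1})B_i + \sum_{i=1}^{n} A_i C(\alpha_{i-1}) + C(\alpha_n)\) supplied by Theorem~\ref{10}. In each of \(A_0, B_0, C_0\) one isolates the factor that carries the dependence on \(n\) and absorbs everything bounded uniformly in \(n\) into a constant.

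First, for \(A_0\): setting \(k = 0\) in Lemma~\ref{20} gives \(A_0 \le e^{(\frac{2a\sigma_\ell^2}{1-\gamma} b^{1-\gamma})} e^{-(\frac{2a\sigma_\ell^2}{1-\gamma}(b+n)^{1-\gamma})}\), and since the first exponential is a constant in \(n\), \(A_0\) decays at the stated rate \(e^{-(\frac{2a\sigma_\ell^2}{1-\gamma}(b+n)^{1-\gamma})}\). Next, setting \(k = 0\) in Lemma~\ref{21} gives \(B_0 \le c_0\, e^{-(\frac{a\sigma_{\min}^2}{1-\gamma}(b+n+1)^{1-\gamma})}\) for a constant \(c_0\); because \((b+n+1)^{1-\gamma} - (b+n)^{1-\gamma} \to 0\) as \(n \to \infty\), this lies within a bounded multiple of \(e^{-(\frac{a\sigma_{\min}^2}{1-\gamma}(b+n)^{1-\gamma})}\), the stated rate --- and it is governed by \(\sigma_{\min}^2\) rather than \(\sigma_\ell^2\), since the step \(A(\alpha_j) \le p(\alpha_j)\) used in Lemma~\ref{21} trades the \(\ell\)-dependent contraction for the worst-case one.

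For \(C_0\) one bounds the three summands separately. Lemma~\ref{22} gives \(\sum_{i=1}^{n} q(\alpha_{i-1})B_i = O\!\big((b+n)^{2-4\gamma}\big)\), the accompanying root-exponential term being asymptotically negligible; Lemma~\ref{23} gives \(\sum_{i=1}^{n} A_i C(\alpha_{i-1}) = O\!\big((b+n)^{1-2\gamma}\big)\) by the same reasoning; and \(C(\alpha_n) = 2a^2 M\tilde{L} F_* (b+n)^{-2\gamma} = O\!\big((b+n)^{-2\gamma}\big)\) directly from the definition of \(C(\cdot)\). For \(\tfrac{1}{2} < \gamma < 1\) one has \(2\gamma - 1 < 4\gamma - 2\) and \(2\gamma - 1 < 2\gamma\), so the slowest-decaying of the three exponents is \(2\gamma - 1\); hence \(C_0 = O\!\big((b+n)^{-(2\gamma-1)}\big)\), i.e.\ it decays at the rate \(\big(\tfrac{b}{b+n}\big)^{\eta}\) with \(\eta = 2\gamma - 1 \in (0,1)\). (For a consistent problem \(F_* = 0\) and \(\sigma^2 = 0\), whence \(q(\alpha_k) \equiv C(\alpha_k) \equiv 0\) and in fact \(C_0 \equiv 0\).)

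The substantive content is already contained in Lemmas~\ref{22}--\ref{23}: the trick of splitting \(\sum_{k=m}^{n}\) at \(k = n/2\), so that the lower half is suppressed by the root-exponential weight while the upper half contributes only a polynomially-decaying tail. Given those lemmas, what remains for Theorem~\ref{24} is bookkeeping --- reading off dominant terms and comparing polynomial exponents --- and I do not anticipate a real obstacle. The one point worth verifying explicitly is that \(\eta\) carries no \(\ell\)-dependence: all \(\ell\)-dependence in the bounds of Lemmas~\ref{22}--\ref{23} sits in root-exponential factors, which are dominated by the \(\ell\)-independent polynomial tail \(O\!\big((b+n)^{1-2\gamma}\big)\) coming from \(\sum_{i=1}^{n} A_i C(\alpha_{i-1})\).
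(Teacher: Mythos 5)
Your proposal is correct and follows essentially the same route as the paper, which states Theorem~\ref{24} as an immediate summary of Lemmas~\ref{20}--\ref{23} and the decomposition of \(C_0\) from Theorem~\ref{10}; you simply make the bookkeeping explicit (setting \(k=0\), discarding \(n\)-independent prefactors, and comparing the polynomial exponents \(4\gamma-2\), \(2\gamma-1\), \(2\gamma\) to identify \(\eta=2\gamma-1\)). The identification of \(\eta\) and the remark that it is \(\ell\)-independent are consistent with, and slightly sharper than, the paper's statement.
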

\noindent
In this setting, when \(a\) is small and the iterations is not too large, the prediction is still relatively accurate. The reason is almost the same as the setting in the section on the \(1/k\) decaying step size. Figure~\ref{Figure5} illustrates the case where \(A \in \mathbb{R}^{30 \times 20}\), \(\sigma(A) \in [0.1, 1]\), \(a=0.2, b=5, \gamma=0.8\), \(l_1=1, l_2=10, l_3=20\), Iter=10000, Rept=20 for a consistent problem. Notice that the light purple curve represents an estimation of \(\langle x_k-x_*, v_\ell\rangle\) where \(\gamma=1\). The difference between the light purple curve and light blue curve is striking. Figure~\ref{Figure6} represents the case with the same setting for a general (inconsistent) problem. 

When the number of iterations is too large, from the numerical experiments, the convergence speed of \(\langle x_k-x_*, v_\ell\rangle\) will decrease, especially for smaller \(\ell\). Actually, on a log-log scale plot, \(\langle x_k-x_*, v_\ell\rangle\) will start to become a straight line. This phenomenon is still within our expectation by considering the convergence speed of \(C_0\) in Theorem~\ref{24}. Using the above settings for a consistent problem, Figure~\ref{Figure7} shows that after 10000 iterations, phase transition appears on \(\langle x_k-x_*, v_1\rangle\), and its convergence speed no more increases, implying that it no longer converges in a root exponential way. Therefore, we should not expect the convergence rate differ by a lot between \(1/k^\gamma\) step size and \(1/k\) step size when the number of iterations is large. But still, \(1/k^\gamma\) step size will converge way faster in the initial stage, as suggested above. This result can also be verified by the behavior of \(\|x_k-x_*\|^2\). Figure~\ref{Figure8} illustrates the case where \(A \in \mathbb{R}^{10000 \times 2000}\), \(\sigma(A) \in [0.01, 1]\), \(a=0.06, b=50, \gamma=0.7\), Iter=\(10^6\), Rept=1 for a consistent problem. Notice that after around 1000 iterations, the yellow curve starts to become a straight line. 
\newpage

\section{Numerical Experiments and Comments}
In the following figures, the legend is used as follows:
\begin{enumerate}
    \item blue solid line with `o' markers: \(\langle x_k-x_*, v_{\ell_1}\rangle / \langle x_0-x_*, v_{\ell_1}\rangle\)
    \item light blue solid line: its prediction
    \item green dash line with `+' markers: \(\langle x_k-x_*, v_{\ell_2}\rangle / \langle x_0-x_*, v_{\ell_2}\rangle\)
    \item light green dash line: its prediction
    \item red dash dot line with `*' markers: \(\langle x_k-x_*, v_{\ell_3}\rangle / \langle x_0-x_*, v_{\ell_3}\rangle\)
    \item light red dash dot line: its prediction
    \item yellow solid line: \(\|x_k-x_*\|^2 / \|x_0-x_*\|^2\)
    \item light purple solid line: rediction of \(\langle x_k-x_*, v_{\ell_1}\rangle\ / \langle x_0-x_*, v_{\ell_1}\rangle\) with \(\gamma=1\), but actually \(\gamma < 1\)
\end{enumerate}

\begin{figure}[h]
    \centering
    \begin{subfigure}{0.45\textwidth}
        \centering
        \includegraphics[width=\textwidth]{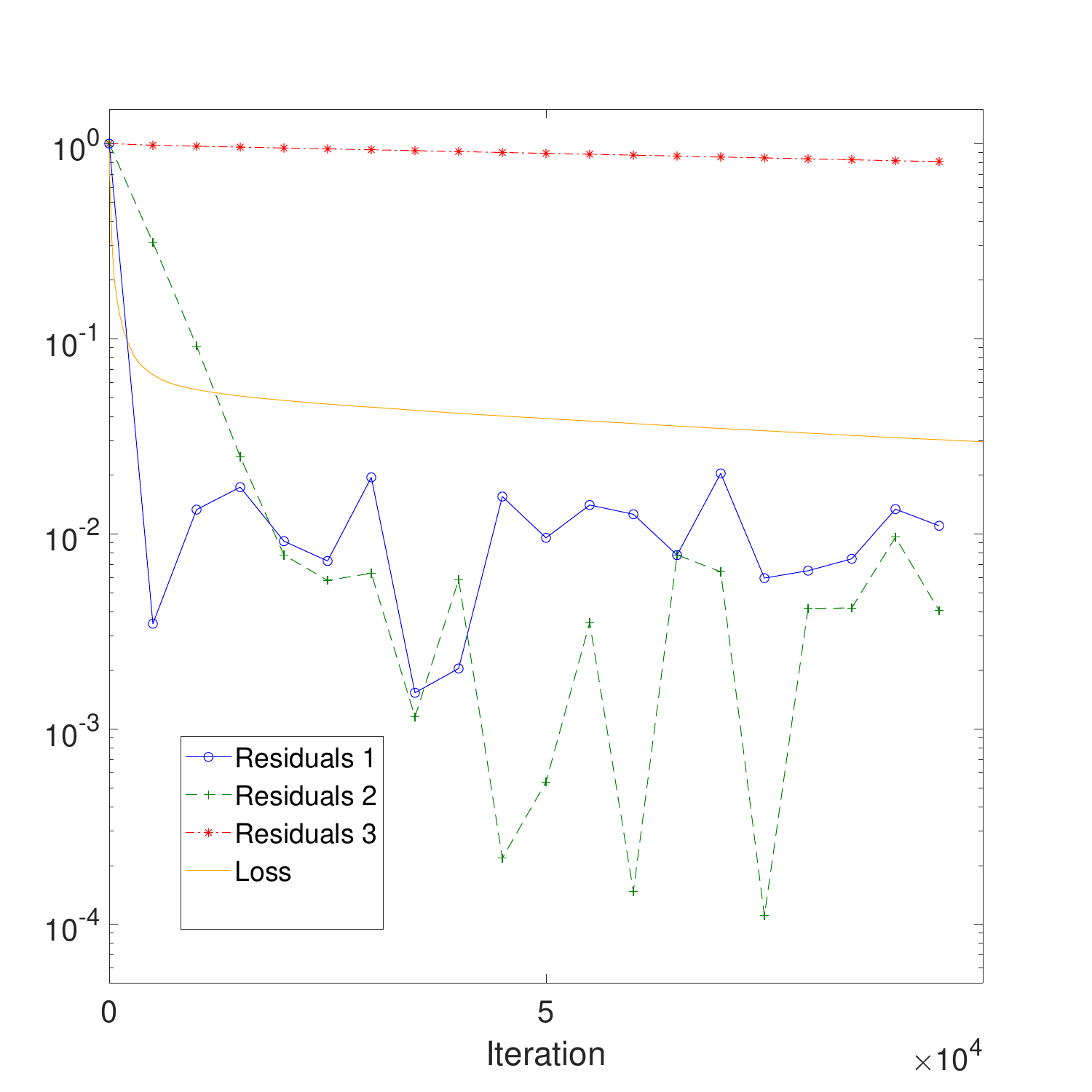}
        \captionsetup{labelformat=empty}
        \caption{Figure 1a}
        \label{Figure1}
    \end{subfigure}
    \hspace{0.05\textwidth} 
    \begin{subfigure}{0.45\textwidth}
        \centering
        \includegraphics[width=\textwidth]{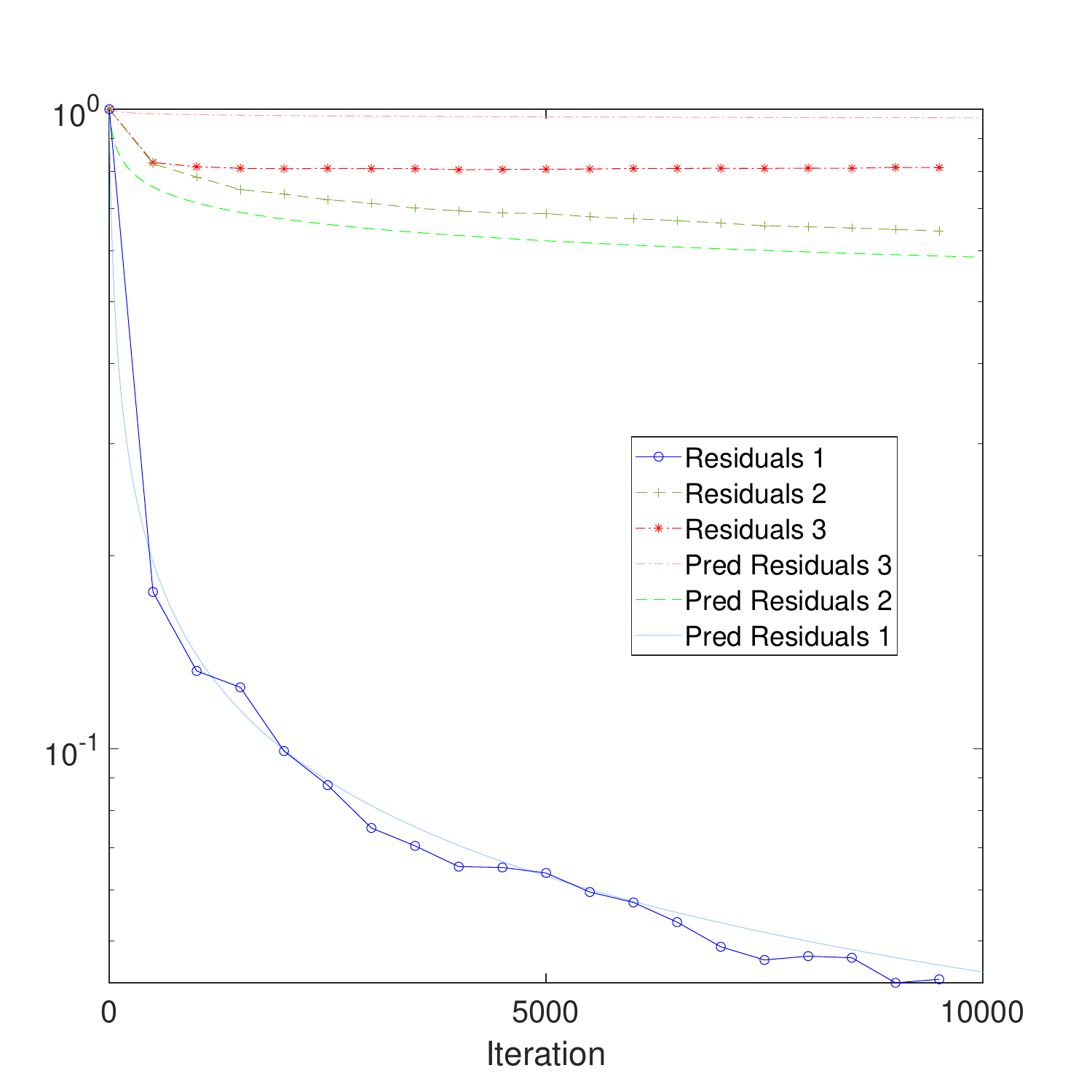}
        \captionsetup{labelformat=empty}
        \caption{Figure 1b}
        \label{Figure2}
    \end{subfigure}
\end{figure}

\begin{figure}[h]
    \centering
    \begin{subfigure}{0.45\textwidth}
        \centering
        \includegraphics[width=\textwidth]{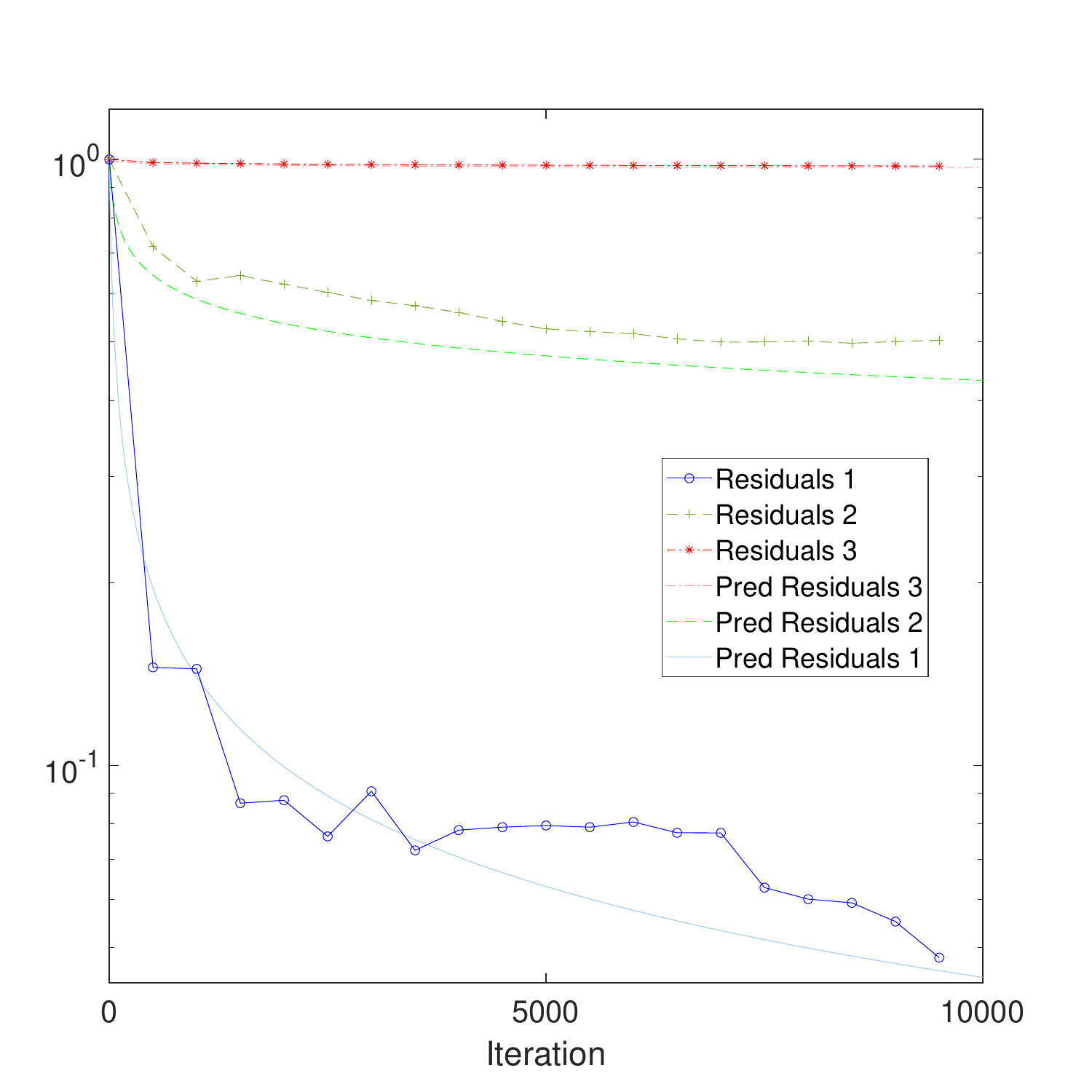}
        \captionsetup{labelformat=empty}
        \caption{Figure 2a}
        \label{Figure3}
    \end{subfigure}
    \hspace{0.05\textwidth} 
    \begin{subfigure}{0.45\textwidth}
        \centering
        \includegraphics[width=\textwidth]{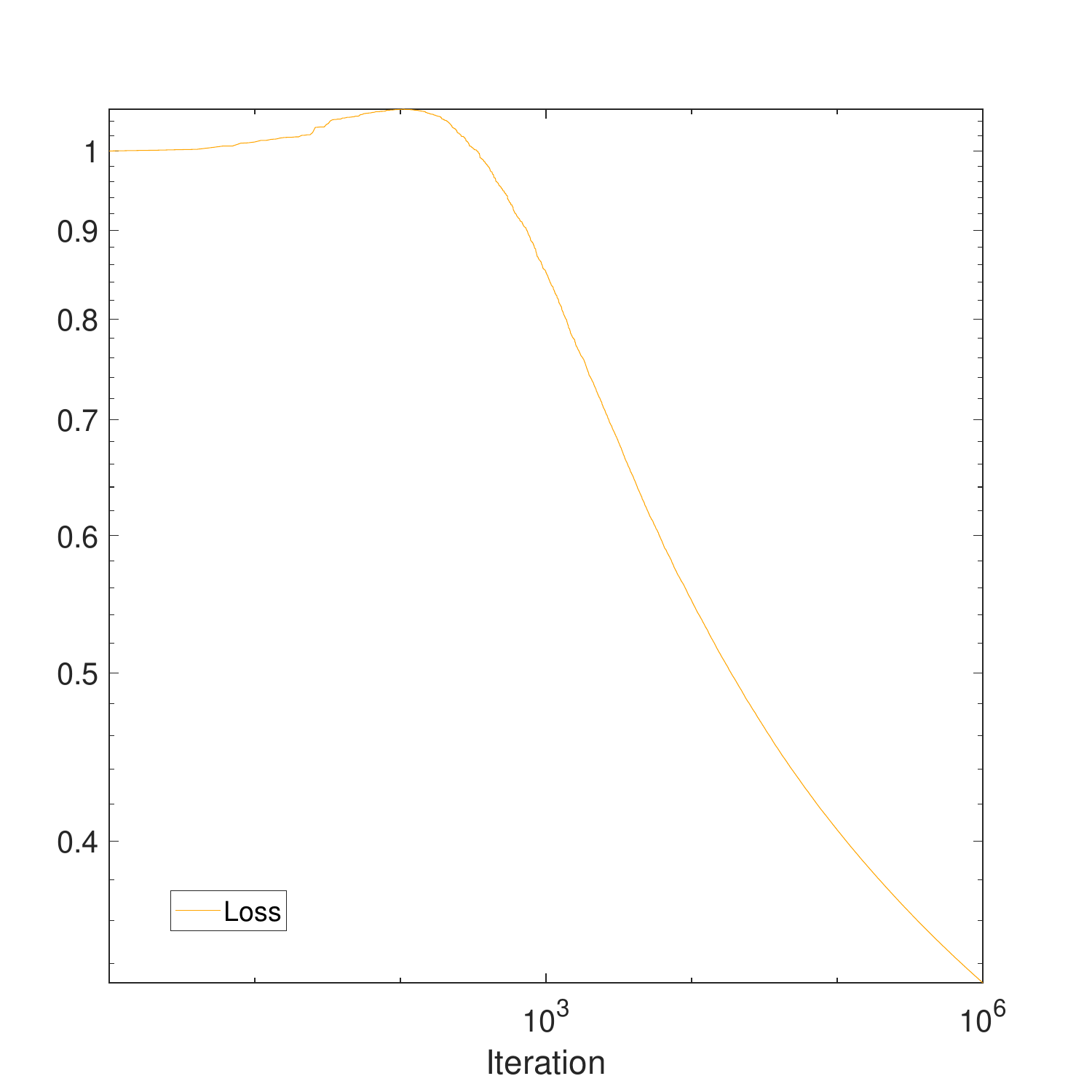}
        \captionsetup{labelformat=empty}
        \caption{Figure 2b}
        \label{Figure4}
    \end{subfigure}
\end{figure}

\begin{figure}[h]
    \centering
    \begin{subfigure}{0.45\textwidth}
        \centering
        \includegraphics[width=\textwidth]{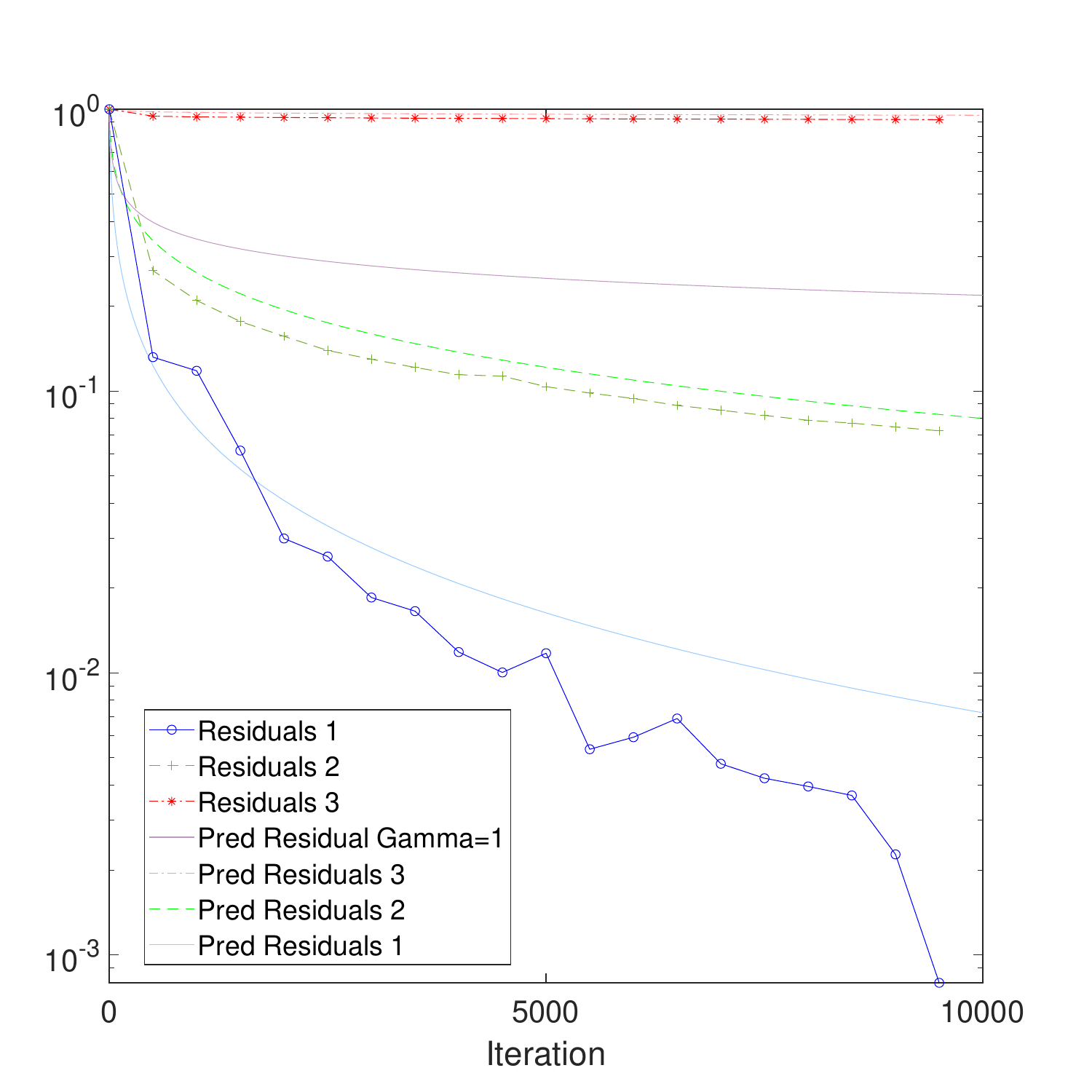}
        \captionsetup{labelformat=empty}
        \caption{Figure 3a}
        \label{Figure5}
    \end{subfigure}
    \hspace{0.05\textwidth} 
    \begin{subfigure}{0.45\textwidth}
        \centering
        \includegraphics[width=\textwidth]{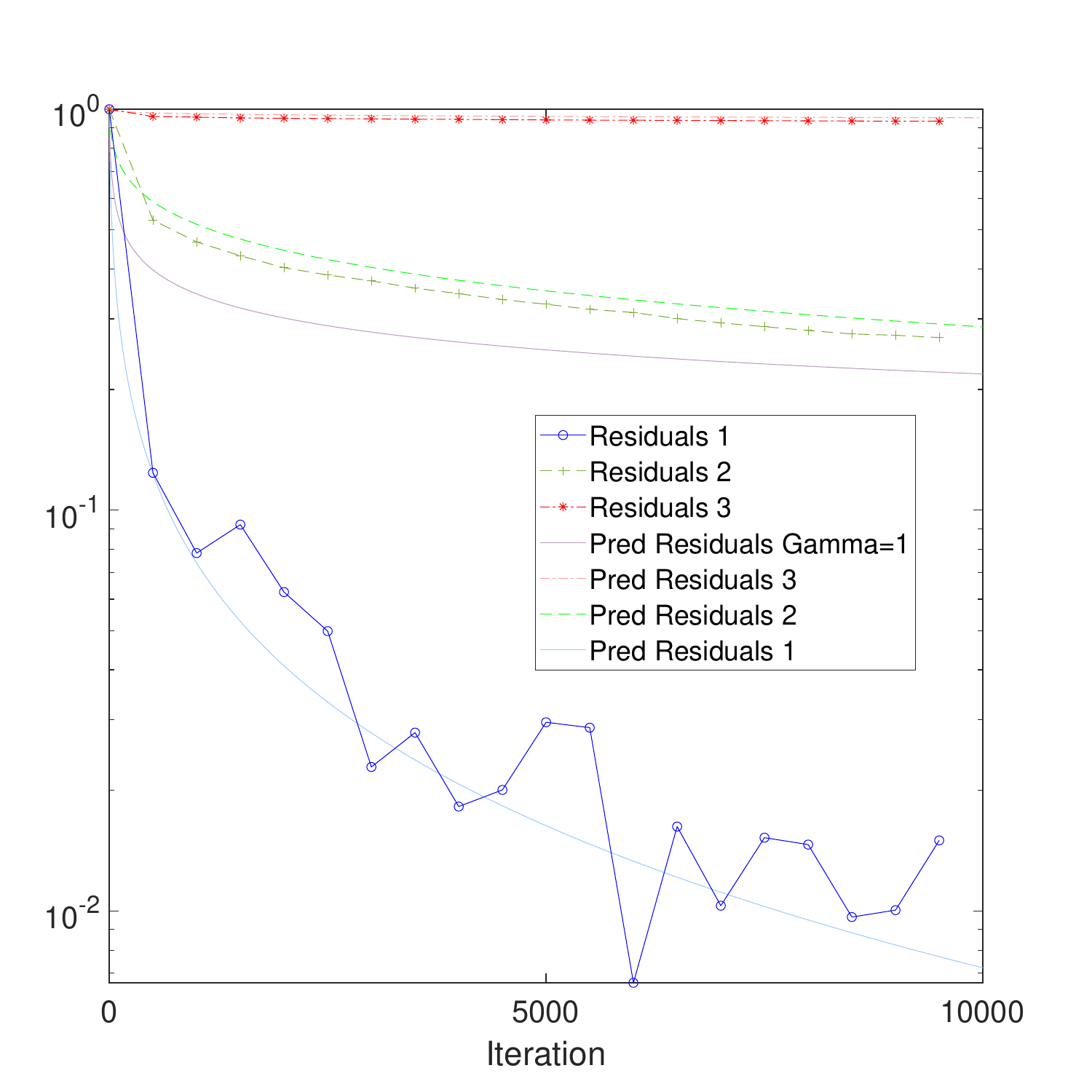}
        \captionsetup{labelformat=empty}
        \caption{Figure 3b}
        \label{Figure6}
    \end{subfigure}
\end{figure}

\begin{figure}[h]
    \centering
    \begin{subfigure}{0.45\textwidth}
        \centering
        \includegraphics[width=\textwidth]{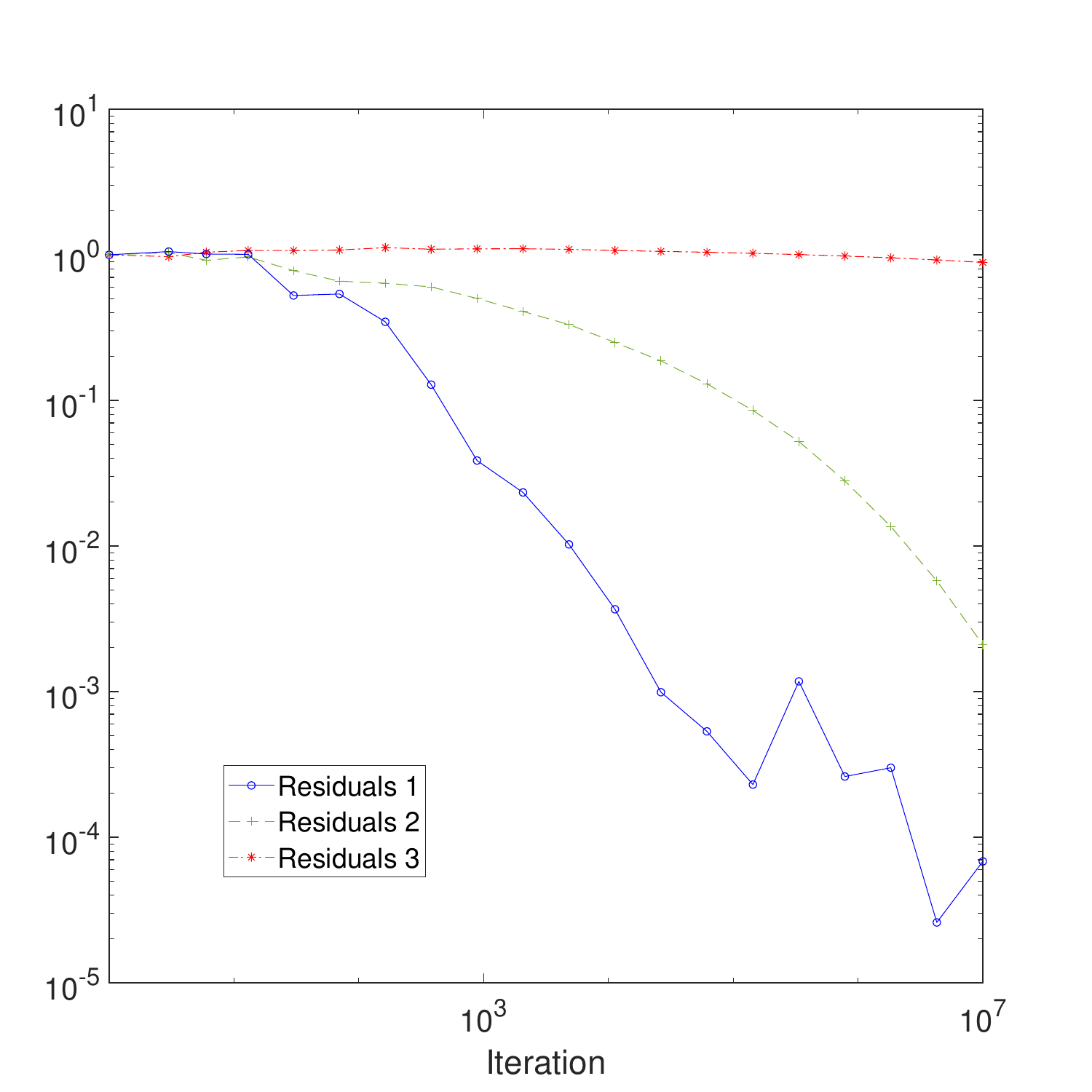}
        \captionsetup{labelformat=empty}
        \caption{Figure 4a}
        \label{Figure7}
    \end{subfigure}
    \hspace{0.05\textwidth} 
    \begin{subfigure}{0.45\textwidth}
        \centering
        \includegraphics[width=\textwidth]{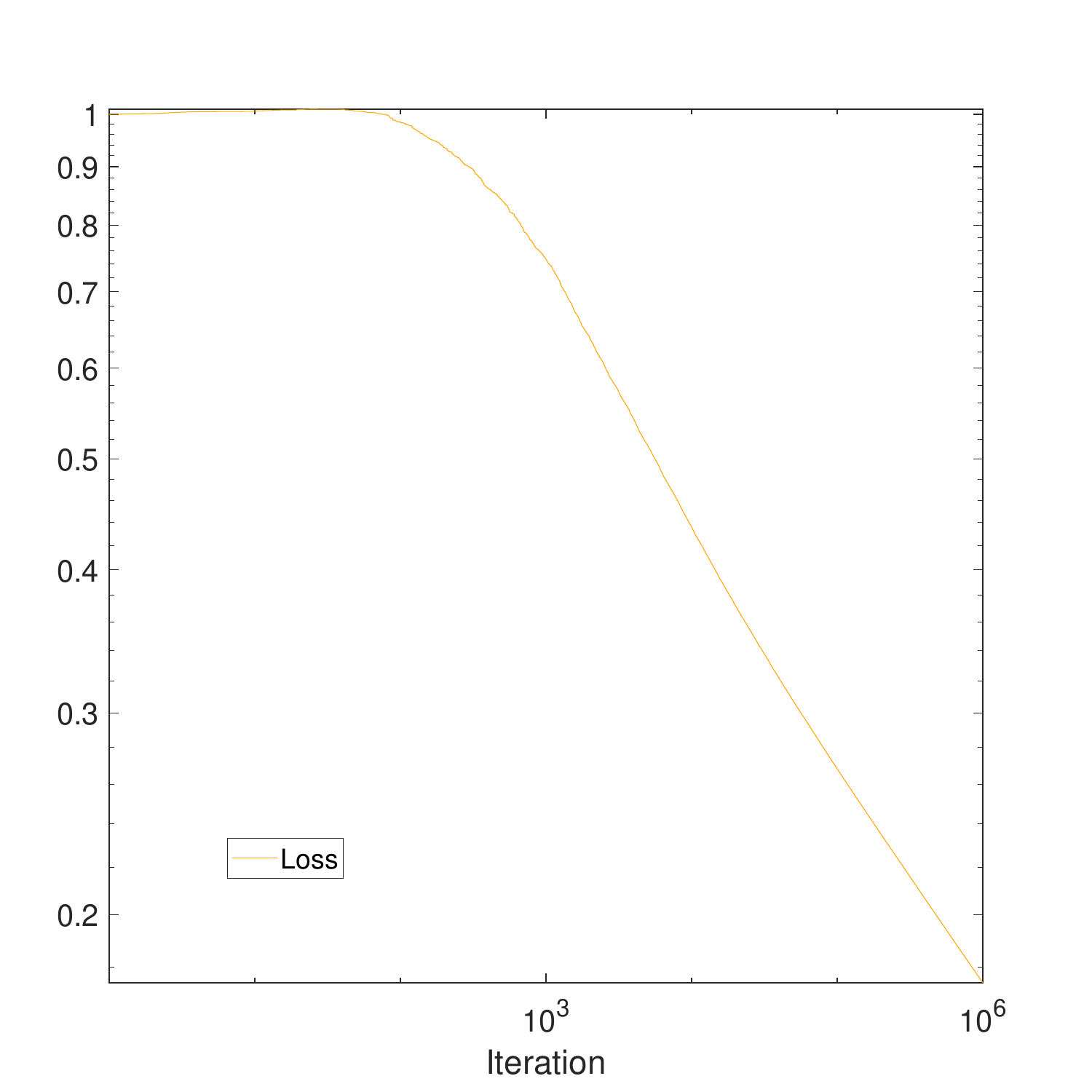}
        \captionsetup{labelformat=empty}
        \caption{Figure 4b}
        \label{Figure8}
    \end{subfigure}
\end{figure}

\newpage
\bibliographystyle{abbrv}
\bibliography{bib2}

\end{document}